\def\supp{{\rm supp}}
\DeclarePairedDelimiter\ceil{\lceil}{\rceil}
\DeclarePairedDelimiter\floor{\lfloor}{\rfloor}
\newtheorem{theorem}{Theorem}[section]
\newtheorem{lemma}[theorem]{Lemma}
\theoremstyle{definition}
\newtheorem{definition}[theorem]{Definition}
\theoremstyle{remark}
\newtheorem{remark}[theorem]{Remark}
\newtheorem{corollary}[theorem]{Corollary}
\numberwithin{equation}{section}
\begin{document}

\title[Computing equilibrium measures with power law kernels]{Computing equilibrium measures\\ with power law kernels}


\author{Timon S. Gutleb}
\address{Department of Mathematics, Imperial College London, London SW7 2AZ, UK}
\curraddr{}
\email{t.gutleb18@imperial.ac.uk}
\thanks{}

\author{Jos\'e A. Carrillo}
\address{Mathematical Institute, University of Oxford, Oxford OX2 6GG, UK}
\curraddr{}
\email{carrillo@maths.ox.ac.uk}
\thanks{}

\author{Sheehan Olver}
\address{Department of Mathematics, Imperial College London, London SW7 2AZ, UK}
\curraddr{}
\email{s.olver@imperial.ac.uk}
\thanks{}

\subjclass[2010]{Primary 65N35; Secondary  65R20, 65K10.}

\date{}

\dedicatory{}

\begin{abstract}
We introduce a method to numerically compute equilibrium measures for problems with attractive-repulsive power law kernels of the form $K(x-y) = \frac{|x-y|^\alpha}{\alpha}-\frac{|x-y|^\beta}{\beta}$ using recursively generated banded and approximately banded operators acting on expansions in ultraspherical polynomial bases. The proposed method reduces what is na\"\i vely a difficult to approach optimization problem over a measure space to a straightforward optimization problem over one or two variables fixing the support of the equilibrium measure. The structure and rapid convergence properties of the obtained operators results in high computational efficiency in the individual optimization steps. We discuss stability and convergence of the method under a Tikhonov regularization and use an implementation to showcase comparisons with analytically known solutions as well as discrete particle simulations. Finally, we numerically explore open questions with respect to existence and uniqueness of equilibrium measures as well as gap forming behaviour in parameter ranges of interest for power law kernels, where the support of the equilibrium measure splits into two intervals.
\end{abstract}

\maketitle

\section{Introduction}\label{sec:introductionsec}
The study of systems in equilibrium states is an important and commonly occurring aspect of the mathematical natural sciences and engineering disciplines. The specific problem of finding an \emph{equilibrium measure} for a dynamical system is encountered in swarming problems, where one seeks to understand the patterns and behaviours associated with natural processes stemming from biology, chemistry or physics by studying the minimization of some non-local interaction energy. Specific fields of study in which equilibrium measure problems find applications may for example be found in the description of the biological flocking and swarming behaviours exhibited by many different species of animals \cite{topaz_nonlocal_2006,parrish_animal_1997} as well as in the ensemble movements and self-organization of organisms of cellular scale and physical particle interactions \cite{BKSUB, BUKB,carrillo_adhesion_2018,carrillo_review_2017,hagan_dynamic_2006,carrillo_particle_2010,HP,kolokolnikov_emergent_2013}. For generality we will henceforth refer to the flocking or swarming objects simply as \emph{particles} and the interpretation of the model is left to the specific application. Along with other related systems all of these problems share a common mathematical continuum description in terms of probability measures $\rho$. This continuum description may be obtained from the originally discrete problem of particle dynamics with particle positions $x_i$, total particle number $N$ (assumed large) and given interaction potential $K$ described by
\begin{align}\label{eq:discreteversion}
\frac{\mathrm{d}v_i}{\mathrm{d}t}= f\left(|v_i|\right)v_i - \frac{1}{N} \sum_{j\neq i} K'({|x_i-x_j|}), 
\end{align}
where $v_i = \frac{\mathrm{d}x_i}{\mathrm{d}t}$ and $i = 1, ..., N$ and $f$ describes self-propulsion and friction, c.f. \cite{PhysRevLett.96.104302,BKSUB,CHM,carrillo_explicit_2016}. In the continuous limit this problem equates to finding probability measures or densities $\rho$ such that on the support of $\rho$ we have
\begin{align*}
\nabla K * \rho = \int K'({x-y}) \rho(y) \mathrm{d}y = 0.
\end{align*}
In this paper we specifically concern ourselves with potentials of attractive-repulsive power law form, i.e.
\begin{align*}
K(x-y) = \frac{|x-y|^\alpha}{\alpha}-\frac{|x-y|^\beta}{\beta},
\end{align*}
where the $\alpha$ term corresponds to the attractive and the $\beta$ term to the repulsive interactions and we require $\alpha>\beta$ for the existence of compactly supported solutions, cf. \cite{carrillo_explicit_2016}. Existence of global minimizers for problems of this type was discussed in \cite{canizo_existence_2015}, with results in  \cite{choksi_minimizers_2015,lopes_uniqueness_2019} proving that the minimizers are unique in the parameter ranges $\alpha \in (2,4)$ and $\beta	\in (-1,0)$. Explicit steady state solutions for various other parameter ranges where either $\alpha$ or $\beta$ is an even integer were obtained in \cite{carrillo_explicit_2016} and were conjectured to be unique global minimizers -- this conjecture was recently proved to be true in \cite{carrillo_radial_2021}. 
The power law equilibrium measures discussed in this paper also have close links to potential theory equilibrium measures with $\mathrm{log}$-kernels $K_0(x-y) = \mathrm{log}(|x-y|)$, which formally correspond to $\alpha = 0$ with a single interaction potential term, see \cite{deift1999orthogonal,saff2013logarithmic}, the numerical computation of which has previously been addressed by Olver \cite{olver_computation_2011} based on recurrence relationships satisfied by weighted Chebyshev polynomials.

In this paper we present a method for the one-dimensional power law equilibrium measure problem based on new recurrence relationships for weighted ultraspherical polynomials used for a sparse spectral method. The proposed method reduces the problem of computing the equilibrium measure for attractive-repulsive power law kernels from an optimization problem in measure space to one over the boundary of the support of the measure, i.e. single or two variable optimization problems.

The sections in this paper are organized as follows: Section \ref{sec:introductionsec} introduces the general mathematical framework of equilibrium measures as well as required elements of function approximation and spectral method approaches using ultraspherical polynomials. In Section \ref{sec:numericalequilibrium} we prove recurrence relationships and other properties required for the proposed method and related to the sparsity of the approximation, while Section \ref{sec:method} details the novel numerical approach for power law kernel equilibrium measures. In Section \ref{sec:twointerval} we detail how a modified version of these recurrence relationships can be used to compute equilibrium measures in parameter ranges in which the measure's support consists of two instead of a single interval. Section \ref{sec:analysis} addresses convergence of the proposed method. In Section \ref{sec:numericalexamples} we show numerical verifications of the method in problems with known solutions as well as comparison with alternative numerical approaches in certain problems without known solutions. In Section \ref{sec:numericalopenQ} we numerically investigate open questions regarding the uniqueness of solutions and gap forming behaviour, including comparisons with results obtained from discrete particle simulations. We conclude with a discussion of present and potential future research.

\subsection{General aspects of the theory of equilibrium measures} \label{sec:equilibriummeasuresintro}
\begin{definition}[Equilibrium measure]\label{def:mainequilmeas}
Given a kernel $K:\mathbb{R} \rightarrow \mathbb{R}$, the equilibrium measure is defined to be the unique measure $\mathrm{d}\rho(x)=\rho(x)\mathrm{d}x$ of mass $M$ such that the following expression is minimized:
\begin{equation}\label{energy}
\frac12\iint K(x-y) \mathrm{d}\rho(x)\mathrm{d}\rho(y) .
\end{equation}
\end{definition}
The total mass $M$ of such a measure $\mathrm{d}\rho(x)=\rho(x)\mathrm{d}x$ is defined to be
\begin{equation} \label{eq:masscondition}
M = \int_{\supp(\rho)} \rho(y) \mathrm{d}y,
\end{equation}
such that $M = 1$ corresponds to searching for a probability measure. In the absence of friction terms, the continuum evolution equation \cite{carrillo_derivation_2014,carrillo_explicit_2016} associated with  \eqref{eq:discreteversion} via the mean-field limit is an aggregation equation
\begin{equation}\label{evol}
\rho_t = \nabla \cdot (\rho \nabla K * \rho ).
\end{equation}
The energy \eqref{energy} is a Lyapunov functional for the previous evolution. Furthermore, the evolution equation \eqref{evol} can be interpreted as a gradient flow of the energy \eqref{energy}. This gradient flow structure \cite{villani_topics_2003,CMcV03,carrillo_global--time_nodate,balague_nonlocal_2013} implies that the equilibrium measures in the sense of Definition \ref{def:mainequilmeas} have to be steady state solutions of this evolution model. This leads to the Euler--Lagrange formulation of the problem:
\begin{align*}
K * \rho &= E, \quad \text{in } \supp(\rho),\\
K * \rho &\geq E, \quad \text{in } \mathbb{R}^d,
\end{align*}
which all local minimizers of $\rho$ must satisfy \cite{balague_dimensionality_2013}.
\subsection{Function approximation with ultraspherical polynomials} \label{sec:spectralultraspherical}
The ultraspherical or Gegenbauer polynomials are a well-studied example of a complete set of classical univariate orthogonal polynomials \cite{beals_special_2016,nist_2018} with natural domain $(-1,1)$ which have recently seen frequent successful use in numerical applications \cite{gautschi_orthogonal_2004,olver_fast_2013, townsend_automatic_2015, hale_ultraspherical_2019}. Every sufficiently smooth univariate function $f(x)$ defined on a real interval $\left(a,b \right)$ may be expanded in a set of orthogonal polynomials on this domain via
\begin{equation*}
    f(t) = \sum_{n=0}^\infty p_n(x) f_n = \mathbf{p}(x)^\mathsf{T} \mathbf{f},
\end{equation*}
where $f_n$ are the coefficients of $f(x)$ in 
$$
\mathbf{p}(x) := \begin{pmatrix}
p_0(x) \\
p_1(x) \\
           \vdots
         \end{pmatrix}.
$$
Numerical approximations may be obtained by truncating this sum and efficient computational tools exist to approximate the coefficients for a given function\footnote{FastTransforms, a recent open source C library  by Slevinsky \cite{slevinsky_conquering_2017,slevinsky_fast_2017,slevinsky_fasttransforms_2019} based on first expanding a function in a Chebyshev expansion using the DCT and utilising low rank techniques for converting between different Jacobi bases provides a convenient tool.}. We write $C_n^{(\lambda)}(x)$ to denote the ultraspherical polynomial of order $n$ with basis parameter $\lambda$ and adopt a vector notation to denote the full ultraspherical polynomial basis, i.e.:
\begin{align*}
     \mathbf{C}^{(\lambda)}(x) := \begin{pmatrix}
           C^{(\lambda)}_0(x) \\
           C^{(\lambda)}_1(x) \\
           \vdots
         \end{pmatrix}.
\end{align*}
With $0 \neq \lambda > -\frac{1}{2}$ the ultraspherical polynomials satisfy the orthogonality condition
\begin{equation}\label{eq:orthcondition}
\int_{-1}^1 (1-x^2)^{\lambda-\frac{1}{2}} C_n^{(\lambda)} (x)C_m^{(\lambda)} (x)\,\mathrm{d}x = \tfrac{2^{1-2\lambda}\pi \Gamma (n+2\lambda)}{n!(n+\lambda)(\Gamma (\lambda))^2} \delta_{nm},
\end{equation}
with respect to the weight $w(x)=(1-x^2)^{\lambda-\frac{1}{2}}$. This corresponds to the special case of $\alpha = \beta = \lambda - 1/2$ for the more general Jacobi polynomials $P_n^{(\alpha,\beta)}(x)$ but with a slightly different scaling \cite[18.7.1]{nist_2018}
\begin{equation*}
C_n^{(\lambda)}(x) = \tfrac{(2\lambda)_n}{(\lambda+\frac{1}{2})_n} P_n^{(\lambda-\frac{1}{2},\lambda-\frac{1}{2})}(x),
\end{equation*}
where $(\cdot)_n$ denotes the Pochhammer function or rising factorial. Classical orthogonal polynomials satisfy a three-term recurrence relationship which reduces to a two-term recurrence relationship for ultraspherical polynomials \cite[18.9.1]{nist_2018}:
\begin{equation}\label{eq:ultrasphrec}
x C_n^{(\lambda)}(x) = \tfrac{(n+2\lambda-1)}{2(n+\lambda)}C_{n-1}^{(\lambda)}(x) + \tfrac{n+1}{2(n+\lambda)}C_{n+1}^{(\lambda)}(x).
\end{equation}
This allows the definition of the Jacobi operator, the transpose of which can be used to define a tridiagonal multiplication-by-$x$ operator on coefficient space, i.e.
\begin{align*}
    &x f(x) = \mathbf{C}^{(\lambda)}(x)^\mathsf{T} \mathrm{X} \mathbf{f} 
\end{align*}
for $f(x) = \mathbf{C}^{(\lambda)}(x)^\mathsf{T} \mathbf f$ where $\mathrm{X}$ is tridiagonal.
Combined with element-wise addition and multiplication, as well as sparse derivative, basis change and integral operators this yields a numerical toolbox for function approximation and sparse spectral methods for integral and differential equations \cite{olver_fast_2013}. Many applications of ultraspherical spectral methods may require conversion methods between ultraspherical polynomials with different parameters $\lambda_1$ and $\lambda_2$. Townsend, Webb and Olver \cite{townsend_fast_2018} have recently produced an extensive analysis and review of conversion operators between different Jacobi polynomial expansions, see also the review in \cite{olver_acta_2020}.
\begin{remark}
One very general implementation of the above ideas may be found in the ApproxFun.jl package \cite{noauthor_juliaapproximation/approxfun.jl_2019} written in the Julia computing language \cite{beks2017}. The ApproxFun package ecosystem is used for the numerical experiments in this paper.\\
\end{remark}
The $m=0$ special case of the orthogonality condition in \eqref{eq:orthcondition} is simply:
\begin{equation}\label{eq:n0integrationcondition}
\int_{-1}^1 (1-x^2)^{\lambda-\frac{1}{2}} (x)C_n^{(\lambda)} (x)\,\mathrm{d}x = \tfrac{2^{1-2\lambda}\pi \Gamma (n+2\lambda)}{n!(n+\lambda)(\Gamma (\lambda))^2} \delta_{n0}.
\end{equation}
Definite integration thus only involves the $n=0$ element of a coefficient vector. This simple result is of great use when normalization of an approximated function is a given constraint, as will be the case in Section \ref{sec:method}.\\
Beyond the fundamental two-term recurrence relationship, the proofs in this paper will make use of two more specific recurrence properties of the ultraspherical polynomials. The first of these is the following known indefinite integral recurrence of the Jacobi polynomials \cite[18.17.1]{nist_2018}: for $n \neq 0$,
\begin{equation*}
\int (1-x)^{\alpha} (1+x)^{\beta}P^{(%
\alpha,\beta)}_{n}\left(x\right) \mathrm{d}x = -\tfrac{(1-x)^{\alpha+1}(1+x)^{\beta+1}}{2n}P^{(\alpha+1,%
\beta+1)}_{n-1}\left(x\right) + {\rm const},
\end{equation*}
which takes the following form for ultraspherical polynomials \cite[18.9.20]{nist_2018}: for $n \neq 0$
\begin{equation}\label{eq:ultrasphericalintegral}
\int(1-x^2)^{\lambda-\frac{1}{2}} C^{(\lambda)}_{n}\left(x\right) \mathrm{d}x = -\tfrac{2 \lambda }{n^2+2 \lambda  n}\left(1-x^2\right)^{\lambda +\frac{1}{2}} C_{n-1}^{(\lambda +1)}(x)  + {\rm const}.
\end{equation}
A further recurrence relationship of use for the proofs in this paper is one which relates multiplications with additional weight terms with degree and parameters of different ultraspherical polynomials \cite[18.9.8]{nist_2018}: 
\begin{equation}\label{eq:ultrasphaltrecurrence}
(1-x^{2})C^{(\lambda+1)}_{n}\left(x\right)=\tfrac{(n+2\lambda)(n+2\lambda+1)}{4\lambda(n+\lambda+1)}C^{(\lambda)}_{n}%
\left(x\right) -\tfrac{(n+1)(n+2)}{4\lambda(n+\lambda+1)}C^{(\lambda)}_{n+2}\left(x\right).
\end{equation}
\section{Towards a sparse spectral method for equilibrium measures} \label{sec:numericalequilibrium}
\subsection{Recurrence relationships for ultraspherical polynomials}\label{sec:recurrence}
We begin with a definition from fractional calculus which will be useful in our proofs:
\begin{definition}[R-L integral]\label{def:rl-integrals}
The left and right-handed Riemann--Liouville fractional integrals are defined respectively by
\begin{align*}
I_{L,a}^s \left[u\right](x) &:= \frac{1}{\Gamma(s)} \int_{a}^{x} (x-y)^{s-1}  u(y) dy,\\
I_{R,b}^s\left[u\right](x) &:= \frac{1}{\Gamma(s)} \int_{x}^{b} (y-x)^{s-1} u(y)  dy.
\end{align*}
\end{definition}
By construction they satisfy the following relationships with respect to differentiation and integration:
\begin{align}\label{eq:rlintegralproperties1}
\frac{d}{\mathrm{d}x}I^{s+1}_{L,a} \left[ f\right](x) = I^s_{L,a} \left[ f\right](x)&, \quad \frac{d}{\mathrm{d}x}I^{s+1}_{R,b} \left[ f\right](x) = I^s_{R,b} \left[ f\right](x), \\ \label{eq:rlintegralproperties2}
I^s_{L,a}\left[I^t_{L,a} \left[ f\right]\right] = I^{s+t}_{L,a}\left[f\right]&, \quad I^s_{R,b}\left[I^t_{R,b}\left[ f\right]\right] = I^{s+t}_{R,b}\left[f\right],
\end{align}
see e.g. \cite[Section 2]{milici2019introduction} and \cite[Section 2]{miller_introduction_1993} for a review of motivation, definitions and properties of fractional integrals. We will prove a recurrence relationship for Riemann--Liouville integrals when acting on ultraspherical polynomials. These fractional integral results are related to the spectrally convergent algorithms for half-integer and fractional integral equations using ultraspherical and Jacobi polynomials discussed in \cite{hale_olver_2018} but among other points differ in choice of bases resulting in operators with different bandedness properties. These recurrence results may then be combined to yield a useful corollary for computations with power-law kernels.

\begin{lemma}\label{lemma:twotermrecRLleft}
The left and right-handed Riemann--Liouville fractional integral operators
\begin{align*}
I^{1+\alpha}_{L,-1} \left[u\right](x) &= \frac{1}{\Gamma
(1+\alpha)} \int_{-1}^{x} (x-y)^{\alpha} u(y) \mathrm{d}y, \\
I^{1+\alpha}_{R,1} \left[u\right](x) &= \frac{1}{\Gamma
(1+\alpha)} \int_{x}^{1} (y-x)^{\alpha} u(y) \mathrm{d}y
\end{align*}
satisfy a two-term recurrence relationship when acting on the ultraspherical polynomials $C_n^{(\lambda)}(y)$ with weight $w(y) = (1-y^2)^{\lambda-\frac{1}{2}}$ such that
\begin{align*}
xI^{1+\alpha}_{L,-1}\left[wC_n^{(\lambda)}\right](x) &= \kappa_1I^{1+\alpha}_{L,-1}\left[ wC_{n-1}^{(\lambda)}\right](x) +\kappa_2 I^{1+\alpha}_{L,-1}\left[ w C_{n+1}^{(\lambda)}\right](x),\\
xI^{1+\alpha}_{R,1}\left[wC_n^{(\lambda)}\right](x) &= \kappa_1 I^{1+\alpha}_{R,1}\left[ wC_{n-1}^{(\lambda)}\right](x) +\kappa_2 I^{1+\alpha}_{R,1}\left[ w C_{n+1}^{(\lambda)}\right](x),
\end{align*}
where $n\geq2$ and the constants are
\begin{align*}
&\kappa_1 = \frac{(n-\alpha-1) (2 \lambda +n-1)}{2 n (\lambda +n)},\\
&\kappa_2 = \frac{(n+1) (2 \lambda +n+\alpha+1)}{2 (\lambda +n) (2 \lambda +n)}.
\end{align*}
\end{lemma}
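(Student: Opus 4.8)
The plan is to treat the left-handed operator in full; the right-handed case will follow by a parallel argument. The only obstruction to commuting the multiplication by $x$ through the operator is that $x$ appears both in the upper limit and in the kernel $(x-y)^{\alpha}$. Writing $x=(x-y)+y$ inside the integrand splits it cleanly, and using $\Gamma(2+\alpha)=(1+\alpha)\Gamma(1+\alpha)$ to recognise the raised-order piece gives
\[
xI^{1+\alpha}_{L,-1}[wC_n^{(\lambda)}](x) = (1+\alpha)\,I^{2+\alpha}_{L,-1}[wC_n^{(\lambda)}](x) + I^{1+\alpha}_{L,-1}[y\,wC_n^{(\lambda)}](x).
\]
The second term is immediate: the two-term recurrence \eqref{eq:ultrasphrec} rewrites $y\,C_n^{(\lambda)}(y)$ as a combination of $C_{n-1}^{(\lambda)}$ and $C_{n+1}^{(\lambda)}$, and by linearity this produces a combination of $I^{1+\alpha}_{L,-1}[wC_{n-1}^{(\lambda)}]$ and $I^{1+\alpha}_{L,-1}[wC_{n+1}^{(\lambda)}]$ with the coefficients from \eqref{eq:ultrasphrec}.

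The essential step is to lower the order of $I^{2+\alpha}_{L,-1}[wC_n^{(\lambda)}]$ back to $1+\alpha$ by integration by parts. By \eqref{eq:ultrasphericalintegral}, the function $V(y):=-\tfrac{2\lambda}{n^2+2\lambda n}(1-y^2)^{\lambda+\frac12}C_{n-1}^{(\lambda+1)}(y)$ is an antiderivative of $w\,C_n^{(\lambda)}$. Integrating $\int_{-1}^x (x-y)^{1+\alpha}w\,C_n^{(\lambda)}\,\mathrm{d}y$ by parts against this antiderivative annihilates both boundary contributions: at $y=x$ because $(x-y)^{1+\alpha}=0$ (as $\alpha>-1$), and at $y=-1$ because the factor $(1-y^2)^{\lambda+\frac12}$ in $V$ vanishes there (as $\lambda>-\tfrac12$). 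This is exactly where the weighted choice of basis pays off. What remains, after absorbing the constant $(1+\alpha)$ into the Gamma functions, is the clean reduction $I^{2+\alpha}_{L,-1}[wC_n^{(\lambda)}] = I^{1+\alpha}_{L,-1}[V]$, so the order is lowered at the cost of replacing $wC_n^{(\lambda)}$ by the $(\lambda+1)$-weighted polynomial $V$.

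It then remains to push $V$, built from $C_{n-1}^{(\lambda+1)}$, back into the $\lambda$ family. Since $(1-y^2)^{\lambda+\frac12}=w(y)(1-y^2)$, I would apply \eqref{eq:ultrasphaltrecurrence} with $n\mapsto n-1$ to write $(1-y^2)C_{n-1}^{(\lambda+1)}$ as a combination of $C_{n-1}^{(\lambda)}$ and $C_{n+1}^{(\lambda)}$, so that $V$ becomes a combination of $wC_{n-1}^{(\lambda)}$ and $wC_{n+1}^{(\lambda)}$. Adding the two contributions yields a two-term expression in $I^{1+\alpha}_{L,-1}[wC_{n-1}^{(\lambda)}]$ and $I^{1+\alpha}_{L,-1}[wC_{n+1}^{(\lambda)}]$; simplifying the products of the rational factors coming from \eqref{eq:ultrasphrec}, \eqref{eq:ultrasphericalintegral} and \eqref{eq:ultrasphaltrecurrence} should collapse to $\kappa_1$ and $\kappa_2$, with the restriction $n\ge2$ keeping every shifted index in the valid range of the three recurrences. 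For the right-handed operator the analogous split $x=y-(y-x)$ flips the sign in front of the $I^{2+\alpha}_{R,1}$ term, while the by-parts step (whose boundary terms now vanish at $y=1$ via $V$ and at $y=x$) produces a compensating sign, so the two sign changes cancel and the identical constants $\kappa_1,\kappa_2$ emerge.

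The main obstacle I anticipate is not any single step but the bookkeeping. The genuine content is the integration-by-parts reduction $I^{2+\alpha}\to I^{1+\alpha}$ together with the vanishing of the boundary terms enabled by the weighted basis; the remainder is a careful but routine composition and simplification of three known recurrences into the stated closed forms for $\kappa_1$ and $\kappa_2$, where the only real risk is arithmetic slips in combining the rational coefficients.
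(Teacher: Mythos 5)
Your proposal is correct and follows essentially the same route as the paper's proof: the split $x=(x-y)+y$, the recurrence \eqref{eq:ultrasphrec} for the $y$-term, the reduction of the order-$(2+\alpha)$ integral via the antiderivative \eqref{eq:ultrasphericalintegral}, and the conversion back to the $\lambda$-basis via \eqref{eq:ultrasphaltrecurrence}. The only cosmetic difference is that you rederive the semigroup identity $I^{2+\alpha}_{L,-1}=I^{1+\alpha}_{L,-1}\circ I^{1}_{L,-1}$ by integration by parts, whereas the paper simply invokes the composition property \eqref{eq:rlintegralproperties2}; the vanishing of $V$ at the relevant endpoint, which you correctly identify, is what makes the two formulations agree.
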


\begin{proof}
We only present the proof for the left-handed case as the proof for the right-handed operator proceeds analogously. For the left-handed fractional integral, we multiply by $x$ and split it up to obtain
\begin{align*}
x &I_{L,-1}^{1+\alpha}\left[ w C_n^{(\lambda)}\right](x) = \tfrac{1}{\Gamma(1+\alpha)} \int_{-1}^{x} x (x-y)^{\alpha} w(y) C_n^{(\lambda)}(y) dy\\ &= \tfrac{1}{\Gamma(1+\alpha)}\int_{-1}^{x} (x-y) (x-y)^{\alpha} w(y) C_n^{(\lambda)}(y) dy + \tfrac{1}{\Gamma(1+\alpha)}\int_{-1}^{x} y (x-y)^{\alpha} w(y)  C_n^{(\lambda)}(y) dy.
\end{align*}
Expanding $y C_n^{(\lambda)}(y)$ using the two-term recurrence for the ultraspherical polynomials in  \eqref{eq:ultrasphrec} takes care of the second term. The first term can be rewritten and expanded as follows using the properties in (\ref{eq:rlintegralproperties1}-\ref{eq:rlintegralproperties2}):
\begin{align*}
\tfrac{1}{\Gamma(1+\alpha)} \int_{-1}^{x} (x-y)^{\alpha+1} w(y)  C_n^{(\lambda)}(y) dy  &= (1+\alpha)I_{L,-1}^{2+\alpha}\left[w C_n^{(\lambda)}\right](x)\\
&=  (1+\alpha)I_{L,-1}^{1+\alpha}\left[I_{L,-1}^{1}\left[w C_n^{(\lambda)}\right]\right](x).
\end{align*}
As the general antiderivative is known from  \eqref{eq:ultrasphericalintegral} it is straightforward to verify that 
\begin{align*}(1+\alpha)I_{L,-1}^{1}\left[w C_n^{(\lambda)}\right](y) = -\tfrac{(1+\alpha)2 \lambda}{n^2+2 \lambda  n}\left(1-y^2\right)^{\lambda +\frac{1}{2}}C^{(\lambda+1)}_{n-1}\left(y\right),
\end{align*}
which can then be expanded into a two-term recurrence relationship using the recurrence in  \eqref{eq:ultrasphaltrecurrence}:
\begin{align*}
&-\tfrac{(1+\alpha)2 \lambda}{n^2+2 \lambda  n}\left(1-y^2\right)^{\lambda +\frac{1}{2}}C^{(\lambda+1)}_{n-1}\left(y\right) \\ &= \tfrac{(1+\alpha)(n+1)}{2(n+\lambda)(n+2\lambda)} \left(1-y^2\right)^{\lambda -\frac{1}{2}}C^{(\lambda)}_{n+1}\left(y\right) - \tfrac{(1+\alpha)(n+2\lambda-1)}{2n(n+\lambda)} \left(1-y^2\right)^{\lambda -\frac{1}{2}}C^{(\lambda)}_{n-1}\left(y\right),
\end{align*}
Thus we have reduced all parts of the left-handed Riemann--Liouville integral to the desired recurrence form. Summing all terms one obtains:
\begin{align*}
&xI_{L,-1}^{1+\alpha}\left[w C_n^{(\lambda)}\right](x) \\&= \tfrac{(n+1) (2 \lambda +n+\alpha+1)}{2 (\lambda +n) (2 \lambda +n)} I_{L,-1}^{1+\alpha}\left[w C_{n+1}^{(\lambda)}\right](x) + \tfrac{(n-\alpha-1) (2 \lambda +n-1)}{2 n (\lambda +n)} I_{L,-1}^{1+\alpha}\left[w C_{n-1}^{(\lambda)}\right](x),
\end{align*} 
which concludes the proof.
\end{proof}
As the above only holds for $n\geq 2$ we treat the initial step $n=1$ separately in the following Lemma and delay discussion of the $n=0$ to the next section.
\begin{lemma}\label{lemma:n1left}
When acting on $C_1^{(\lambda)}(y) = 2 \lambda y$ with weight $w(y)=(1-y^2)^{\lambda-\frac{1}{2}}$ the left-handed Riemann--Liouville fractional integral reduces as follows:
\begin{equation*}
I^{1+\alpha}_{L,-1} \left[wC_1^{(\lambda)}\right](x) = \tfrac{-2\lambda}{\Gamma
(1+\alpha)} \int_{-1}^{x} (x-y)^{\alpha+1} w(y) \mathrm{d}y + \tfrac{2\lambda x}{\Gamma
(1+\alpha)} \int_{-1}^{x} (x-y)^{\alpha} w(y) \mathrm{d}y.
\end{equation*}
and analogously for the right-handed Riemann--Liouville fractional integral:
\begin{equation*}
I^{1+\alpha}_{R,1} \left[wC_1^{(\lambda)}\right](x) = \tfrac{2\lambda}{\Gamma
(1+\alpha)} \int_{x}^{1} (y-x)^{\alpha+1} w(y) \mathrm{d}y + \tfrac{2\lambda x}{\Gamma
(1+\alpha)} \int_{x}^{1} (y-x)^{\alpha} w(y) \mathrm{d}y.
\end{equation*}
\end{lemma}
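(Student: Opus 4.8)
The plan is to substitute the explicit closed form $C_1^{(\lambda)}(y) = 2\lambda y$ directly into the definition of the Riemann--Liouville integral and then decompose the linear factor $y$ using the same algebraic splitting that drove the proof of Lemma \ref{lemma:twotermrecRLleft}. For the left-handed operator I would begin from
\begin{align*}
I^{1+\alpha}_{L,-1}\left[wC_1^{(\lambda)}\right](x) = \frac{2\lambda}{\Gamma(1+\alpha)}\int_{-1}^{x} (x-y)^{\alpha} w(y)\, y \,\mathrm{d}y,
\end{align*}
so that the whole task reduces to handling the integral of $(x-y)^{\alpha} w(y)\,y$.

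The key step is the elementary identity $y = x - (x-y)$, which separates the factor $y$ into a piece proportional to the constant $x$ (which pulls out of the integral) and a piece that raises the exponent of $(x-y)$ from $\alpha$ to $\alpha+1$. Substituting this and splitting the integral yields
\begin{align*}
\int_{-1}^{x} (x-y)^{\alpha} w(y)\, y \,\mathrm{d}y = x\int_{-1}^{x} (x-y)^{\alpha} w(y)\,\mathrm{d}y - \int_{-1}^{x} (x-y)^{\alpha+1} w(y)\,\mathrm{d}y,
\end{align*}
and reattaching the prefactor $\tfrac{2\lambda}{\Gamma(1+\alpha)}$ reproduces the stated left-handed expression verbatim. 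For the right-handed operator I would run the identical argument but instead use $y = x + (y-x)$, so that the linear factor splits into the constant $x$ together with a term raising the exponent of $(y-x)$; this change of orientation is precisely what produces the plus sign in the first term of the right-handed identity.

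I do not anticipate any genuine obstacle in this Lemma. Unlike the $n\geq 2$ case in Lemma \ref{lemma:twotermrecRLleft}, the degree-one polynomial is linear, so neither the antiderivative recurrence \eqref{eq:ultrasphericalintegral} nor the weight-shifting recurrence \eqref{eq:ultrasphaltrecurrence} is invoked, and the claim follows from the trivial telescoping of the monomial $y$ against the kernel argument $(x-y)$ or $(y-x)$. The only matter requiring care is correct bookkeeping of signs, which differ between the two cases because $(x-y)$ and $(y-x)$ carry opposite orientation; this is what accounts for the sign discrepancy between the two displayed formulas in the statement.
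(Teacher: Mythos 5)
Your proposal is correct and matches the paper's proof essentially verbatim: the paper likewise substitutes $C_1^{(\lambda)}(y)=2\lambda y$ into the definition and splits $y=(y-x)+x$ (equivalently your $y=x-(x-y)$) to absorb one factor into the kernel exponent, with the same sign bookkeeping for the right-handed case. No gaps.
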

\begin{proof}
Both cases proceed analogously, so we only prove the left-handed one:
\begin{align*}
I^{1+\alpha}_{L,-1}& \left[wC_1^{(\lambda)}\right](x) = \tfrac{2\lambda}{\Gamma
(1+\alpha)} \int_{-1}^{x} y  (x-y)^{\alpha} w(y) \mathrm{d}y\\ 
&=\tfrac{2\lambda}{\Gamma
(1+\alpha)} \int_{-1}^{x} (y-x)  (x-y)^{\alpha} w(y) \mathrm{d}y + \tfrac{2\lambda}{\Gamma
(1+\alpha)} \int_{-1}^{x} x  (x-y)^{\alpha} w(y) \mathrm{d}y\\
&=\tfrac{-2\lambda}{\Gamma
(1+\alpha)} \int_{-1}^{x} (x-y)^{\alpha+1} w(y) \mathrm{d}y + \tfrac{2\lambda x}{\Gamma
(1+\alpha)} \int_{-1}^{x} (x-y)^{\alpha} w(y) \mathrm{d}y.
\end{align*}
\end{proof}
The preceding results now yield two useful corollaries.
\begin{theorem}\label{theorem:twotermrecFullop}
The integral operator
\begin{equation*}
Q^{\alpha} \left[u\right](x) = \int_{-1}^{1} | x-y |^{\alpha} u(y) \mathrm{d}y
\end{equation*}
satisfies a two-term recurrence relationship when acting on the ultraspherical polynomials $C_n^{(\lambda)}(y)$ with weight $w(y) = (1-y^2)^{\lambda-\frac{1}{2}}$ such that
\begin{align*}
xQ^{\alpha}\left[wC_n^{(\lambda)}\right](x) = \kappa_1 Q^{\alpha}\left[ wC_{n-1}^{(\lambda)}\right](x) +\kappa_2 Q^{\alpha}\left[ w C_{n+1}^{(\lambda)}\right](x),
\end{align*}
where $n\geq2$ and with the constants
\begin{align*}
&\kappa_1 = \frac{(n-\alpha-1) (2 \lambda +n-1)}{2 n (\lambda +n)},\\
&\kappa_2 = \frac{(n+1) (2 \lambda +n+\alpha+1)}{2 (\lambda +n) (2 \lambda +n)}.
\end{align*}
\end{theorem}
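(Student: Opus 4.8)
The plan is to recognize that the kernel $|x-y|^\alpha$ splits naturally across its singularity at $y=x$, so that $Q^\alpha$ decomposes into exactly the two Riemann--Liouville operators already analyzed in Lemma \ref{lemma:twotermrecRLleft}. The entire result should then follow by linearity, with essentially no new computation required.

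First I would fix $x\in(-1,1)$ and break the defining integral of $Q^\alpha$ at $y=x$, using that $|x-y|^\alpha=(x-y)^\alpha$ on the interval $(-1,x)$ and $|x-y|^\alpha=(y-x)^\alpha$ on $(x,1)$. Comparing directly with Definition \ref{def:rl-integrals} (and the explicit forms stated in Lemma \ref{lemma:twotermrecRLleft}), this gives the clean decomposition
\begin{equation*}
Q^\alpha[u](x)=\Gamma(1+\alpha)\Bigl(I^{1+\alpha}_{L,-1}[u](x)+I^{1+\alpha}_{R,1}[u](x)\Bigr).
\end{equation*}
The point is that both the left- and right-handed fractional integrals appear with the \emph{same} prefactor $\Gamma(1+\alpha)$, so this constant will simply factor out of the final identity.

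Next I would apply $Q^\alpha$ to a weighted ultraspherical polynomial $wC_n^{(\lambda)}$, multiply by $x$, and substitute the above decomposition. Invoking Lemma \ref{lemma:twotermrecRLleft}, which establishes that $I^{1+\alpha}_{L,-1}$ and $I^{1+\alpha}_{R,1}$ obey the \emph{identical} two-term recurrence with the common constants $\kappa_1$ and $\kappa_2$ for $n\ge 2$, I would expand each term:
\begin{align*}
xI^{1+\alpha}_{L,-1}\left[wC_n^{(\lambda)}\right](x)&=\kappa_1 I^{1+\alpha}_{L,-1}\left[wC_{n-1}^{(\lambda)}\right](x)+\kappa_2 I^{1+\alpha}_{L,-1}\left[wC_{n+1}^{(\lambda)}\right](x),\\
xI^{1+\alpha}_{R,1}\left[wC_n^{(\lambda)}\right](x)&=\kappa_1 I^{1+\alpha}_{R,1}\left[wC_{n-1}^{(\lambda)}\right](x)+\kappa_2 I^{1+\alpha}_{R,1}\left[wC_{n+1}^{(\lambda)}\right](x).
\end{align*}
Adding these two lines, multiplying through by $\Gamma(1+\alpha)$, and recollecting each shifted index back into $Q^\alpha$ via the decomposition yields precisely
\begin{equation*}
xQ^\alpha\left[wC_n^{(\lambda)}\right](x)=\kappa_1 Q^\alpha\left[wC_{n-1}^{(\lambda)}\right](x)+\kappa_2 Q^\alpha\left[wC_{n+1}^{(\lambda)}\right](x),
\end{equation*}
which is the claimed recurrence with the stated $\kappa_1,\kappa_2$.

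The step I expect to warrant the most care is the very first one: justifying that the split at $y=x$ is valid for every $x$ in the open interval and that the two resulting integrals coincide exactly with $\Gamma(1+\alpha)I^{1+\alpha}_{L,-1}$ and $\Gamma(1+\alpha)I^{1+\alpha}_{R,1}$ as defined. Once this identification is in place, the fact that both operators share \emph{the same} recurrence constants $\kappa_1,\kappa_2$ is exactly what makes the argument collapse to pure linearity, so no genuine analytic obstacle remains — the heavy lifting has already been done in Lemma \ref{lemma:twotermrecRLleft}.
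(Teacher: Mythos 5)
Your proposal is correct and follows exactly the paper's own argument: the paper likewise splits the integral at $y=x$ to write $\tfrac{1}{\Gamma(1+\alpha)}Q^{\alpha} = I^{1+\alpha}_{L,-1}+I^{1+\alpha}_{R,1}$ and then invokes Lemma \ref{lemma:twotermrecRLleft} with the shared constants $\kappa_1,\kappa_2$. No further comment is needed.
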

\begin{proof}
This result follows immediately from Lemma \ref{lemma:twotermrecRLleft} when noting that one can rewrite this $Q^{\alpha}$ integral operator as a sum of left and right-handed Riemann--Liouville integrals (compare \cite[p.555]{ablowitz_complex_2003}):
\begin{align*}
\int_{-1}^{1} &| x-y |^{\alpha} (1-y^2)^{\lambda-\frac{1}{2}} C_n^{(\lambda)}(y) \mathrm{d}y = \\  &\int_{-1}^{x}(x-y)^{\alpha} (1-y^2)^{\lambda-\frac{1}{2}}  C_n^{(\lambda)}(y) \mathrm{d}y +\int_{x}^{1} (y-x)^{\alpha} (1-y^2)^{\lambda-\frac{1}{2}}    C_n^{(\lambda)}(y)  \mathrm{d}y,
\end{align*}
or in operator notation
\begin{equation*}
\tfrac{1}{\Gamma(1+\alpha)}Q^{\alpha}\left[wC_n^{(\lambda)}\right](x) = I_{L,-1}^{1+\alpha}\left[w C_n^{(\lambda)}\right](x) + I_{R,1}^{1+\alpha}\left[w C_n^{(\lambda)}\right](x).
\end{equation*}
\end{proof}
\begin{corollary}\label{corr:Qn1case}
When acting on $C_1^{(\lambda)}(y) = 2 \lambda y$ with weight $w(y)=(1-y^2)^{\lambda-\frac{1}{2}}$ the integral operator $Q^\alpha$ seen in Theorem~\ref{theorem:twotermrecFullop} reduces as follows:
\begin{align*}
Q^{\alpha} &\left[w C_1^{(\lambda)} \right](x)\\ &= 2\lambda \left( \int_{x}^{1} (y-x)^{\alpha+1} w(y) \mathrm{d}y -\int_{-1}^{x} (x-y)^{\alpha+1} w(y) \mathrm{d}y \right) + 2 \lambda x Q^{\alpha} \left[w\right](x).
\end{align*}
\begin{proof}
As with Theorem~\ref{theorem:twotermrecFullop}, this can be shown by writing $Q^\alpha$ as a sum of left- and right-handed Riemann--Liouville integrals, followed by using Lemma \ref{lemma:n1left}.
\end{proof}
\end{corollary}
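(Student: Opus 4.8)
The plan is to follow exactly the recipe signalled in the statement: decompose $Q^\alpha$ into its left- and right-handed Riemann--Liouville pieces and then substitute the explicit $n=1$ reductions already established in Lemma~\ref{lemma:n1left}. Concretely, the same splitting used in the proof of Theorem~\ref{theorem:twotermrecFullop} gives
\begin{equation*}
Q^{\alpha}\left[wC_1^{(\lambda)}\right](x) = \Gamma(1+\alpha)\left(I_{L,-1}^{1+\alpha}\left[w C_1^{(\lambda)}\right](x) + I_{R,1}^{1+\alpha}\left[w C_1^{(\lambda)}\right](x)\right),
\end{equation*}
so the whole computation reduces to inserting the two identities from Lemma~\ref{lemma:n1left} and collecting terms.

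Next I would substitute the left-handed reduction, whose two summands carry the factors $\tfrac{-2\lambda}{\Gamma(1+\alpha)}\int_{-1}^x (x-y)^{\alpha+1}w\,\mathrm{d}y$ and $\tfrac{2\lambda x}{\Gamma(1+\alpha)}\int_{-1}^x (x-y)^{\alpha}w\,\mathrm{d}y$, together with the right-handed reduction, which contributes $\tfrac{2\lambda}{\Gamma(1+\alpha)}\int_x^1 (y-x)^{\alpha+1}w\,\mathrm{d}y$ and $\tfrac{2\lambda x}{\Gamma(1+\alpha)}\int_x^1 (y-x)^{\alpha}w\,\mathrm{d}y$. After multiplying through by $\Gamma(1+\alpha)$ the prefactors become clean multiples of $2\lambda$ and $2\lambda x$, leaving four integral contributions to organise.

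The one genuine observation, and the only step beyond bookkeeping, is that the two $\alpha$-power integrals recombine while the two $(\alpha+1)$-power integrals do not. Since $\int_{-1}^x(x-y)^\alpha w\,\mathrm{d}y + \int_x^1(y-x)^\alpha w\,\mathrm{d}y = \int_{-1}^1 |x-y|^\alpha w\,\mathrm{d}y = Q^\alpha[w](x)$, the two terms proportional to $2\lambda x$ collapse into the single summand $2\lambda x\, Q^\alpha[w](x)$. The two $(\alpha+1)$-power integrals, by contrast, enter with opposite signs, so they remain as the separate contributions $\int_x^1(y-x)^{\alpha+1}w\,\mathrm{d}y$ and $-\int_{-1}^x(x-y)^{\alpha+1}w\,\mathrm{d}y$, the minus sign on the latter being precisely the one produced by the left-handed reduction. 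Collecting all four pieces yields exactly the stated identity, so I expect no real obstacle here: the result is an immediate corollary of Theorem~\ref{theorem:twotermrecFullop} and Lemma~\ref{lemma:n1left} once the sign tracking and the $\alpha$-power recombination are carried out carefully.
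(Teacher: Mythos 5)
Your proposal is correct and follows exactly the route the paper takes: splitting $Q^\alpha$ into the left- and right-handed Riemann--Liouville integrals as in the proof of Theorem~\ref{theorem:twotermrecFullop} and substituting the two identities from Lemma~\ref{lemma:n1left}. The sign tracking and the recombination of the two $\alpha$-power integrals into $Q^\alpha[w](x)$ are carried out correctly, so there is nothing to add.
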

\subsection{Structure and sparsity of the integral operator}\label{sec:structure}
In this section we discuss the structure and sparsity of the power law integral operator $Q^\alpha$ when acting on a function expanded in the weighted ultraspherical polynomials \begin{align*}
u(x) = (1-x^2)^{\lambda-\frac{1}{2}} \mathbf{C}^{(\lambda)}(x)^\mathsf{T} \mathbf{u},
\end{align*}
ranging from specific special to very general cases. The two-term recurrence relationships derived in the previous section make no assumptions about a connection between the weight parameter $\lambda$ and the kernel power $\alpha$, which is why they will be useful for the general case with an attractive and a repulsive kernel term. The observed sparsity pattern varies based on both the given power $\alpha$ and the chosen basis parameter $\lambda$.
\subsubsection{Diagonal operators for powers $\mathbf{\alpha\in (-1,1)}$}
In the special case regime $\alpha\in (-1,1)$, with appropriately chosen weight parameter $\lambda = -\frac{\alpha}{2}$ we can leverage a result due to Popov \cite{popov_properties_1963} which tells us that the power law $Q^{\alpha}$ operator is \emph{diagonal}:
\begin{lemma}
Within the range $x \in (-1,1)$ with $\alpha \in (-1,1)$ the integral operator
\begin{equation*}
Q^{\alpha} \left[u\right](x) = \int_{-1}^{1} | x-y |^{\alpha} u(y) \mathrm{d}y
\end{equation*}
evaluates as follows when acting on the weighted ultraspherical polynomials $C_n^{(\lambda)}(y)$ with $\lambda = -\frac{\alpha}{2}$ and $w(y) = (1-y^2)^{\lambda-\frac{1}{2}}$:
\begin{equation*}
Q^{\alpha} \left[wC_n^{-\left(\frac{\alpha}{2}\right)}\right](x) = \tfrac{(-1)^n\pi }{n B(-n+\alpha +1,n)\cos \left(\frac{\pi  \alpha }{2}\right)}C_n^{-\left(\frac{\alpha}{2}\right)}(x).
\end{equation*}
where $B$ is the Beta function
$$
B(x,y) = \tfrac{\Gamma(x) \Gamma(y)}{\Gamma(x+y)}.
$$
\begin{proof}
A proof of this result can be found in \cite{popov_properties_1963}.
\end{proof}
\end{lemma}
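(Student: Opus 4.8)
The plan is to avoid reproducing Popov's original argument and instead derive the eigenrelation directly from the recurrences already established, reducing the claim to two ingredients: (i) that $Q^\alpha[wC_n^{(\lambda)}]$ is a polynomial in $x$ of degree at most $n$ with the same parity as $C_n^{(\lambda)}$, and (ii) the identification of the scalar eigenvalue. The parity statement is immediate: since $|(-x)-(-y)|^\alpha = |x-y|^\alpha$, the weight $w$ is even, and $C_n^{(\lambda)}(-y) = (-1)^n C_n^{(\lambda)}(y)$, the substitution $y \mapsto -y$ gives $Q^\alpha[wC_n^{(\lambda)}](-x) = (-1)^n Q^\alpha[wC_n^{(\lambda)}](x)$. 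Everything else I would obtain by induction on $n$ using Theorem~\ref{theorem:twotermrecFullop} together with the ultraspherical recurrence \eqref{eq:ultrasphrec}.

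First I would settle the base cases. For $n=0$ the claim is that
\[
c_0 := \int_{-1}^{1} |x-y|^\alpha (1-y^2)^{\lambda-\frac12}\,\mathrm{d}y, \qquad \lambda=-\tfrac{\alpha}{2},
\]
is independent of $x \in (-1,1)$; this is the classical statement that the normalised weight is the equilibrium measure of the Riesz kernel $|x-y|^\alpha$ on $[-1,1]$. I would prove it either by differentiating in $x$ and showing the resulting principal-value integral vanishes, or by a direct Beta-integral evaluation, which also yields the value $c_0 = \pi/\cos(\tfrac{\pi\alpha}{2})$ (consistent with the limit $\alpha\to 0$, where the integral is $\int_{-1}^1(1-y^2)^{-1/2}\,\mathrm{d}y = \pi$). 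The case $n=1$ then follows from Corollary~\ref{corr:Qn1case}: writing $Q^\alpha[wC_1^{(\lambda)}]$ as $2\lambda x\,Q^\alpha[w]$ plus a difference of two one-sided integrals, I would differentiate that difference in $x$, recognise it as $-(\alpha+1)$ times the $n=0$ integral, and use oddness to conclude that $Q^\alpha[wC_1^{(\lambda)}]$ is linear, with $c_1 = -\alpha\,c_0$.

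For the inductive step, I would assume $Q^\alpha[wC_{n-1}^{(\lambda)}]=c_{n-1}C_{n-1}^{(\lambda)}$ and $Q^\alpha[wC_n^{(\lambda)}]=c_nC_n^{(\lambda)}$ and feed these into Theorem~\ref{theorem:twotermrecFullop}, which forces
\[
\kappa_2\,Q^\alpha[wC_{n+1}^{(\lambda)}] = c_n\, x\,C_n^{(\lambda)} - \kappa_1 c_{n-1} C_{n-1}^{(\lambda)}.
\]
Expanding $xC_n^{(\lambda)}$ by \eqref{eq:ultrasphrec} produces a $C_{n-1}^{(\lambda)}$ term and a $C_{n+1}^{(\lambda)}$ term; the crux is the algebraic identity, valid precisely when $\lambda=-\tfrac{\alpha}{2}$, that makes the $C_{n-1}^{(\lambda)}$ contribution cancel against $\kappa_1 c_{n-1}C_{n-1}^{(\lambda)}$. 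Under $\alpha=-2\lambda$ one has $\kappa_1 = (n+2\lambda-1)^2/(2n(n+\lambda))$ and $\kappa_2=(n+1)^2/(2(n+\lambda)(n+2\lambda))$, and both the surviving-coefficient computation and the cancellation condition collapse to the single eigenvalue recurrence
\[
c_{n+1} = \frac{n+2\lambda}{n+1}\,c_n ,
\]
the consistency of this ``forward'' relation with the ``backward'' cancellation relation being exactly the identity that must be checked. This simultaneously proves diagonality and determines the eigenvalues. Solving with $c_0=\pi/\cos(\tfrac{\pi\alpha}{2})$ gives $c_n = \tfrac{\pi}{\cos(\pi\alpha/2)}\tfrac{(2\lambda)_n}{n!} = \tfrac{\pi}{\cos(\pi\alpha/2)}\tfrac{(-\alpha)_n}{n!}$, and the elementary rewriting $\tfrac{(-\alpha)_n}{n!} = \tfrac{(-1)^n\Gamma(\alpha+1)}{\Gamma(\alpha+1-n)\,n!} = \tfrac{(-1)^n}{n\,B(-n+\alpha+1,n)}$ reproduces the stated closed form.

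I expect the main obstacle to be the base cases rather than the induction: establishing that the $n=0$ integral is genuinely constant in $x$ and extracting its exact value is the one step that is not purely formal, since the kernel $|x-y|^\alpha$ is not smooth and the weight is singular at the endpoints. An appealing alternative that sidesteps the coefficient bookkeeping for diagonality is to note that, once polynomiality of degree $\le n$ is known (which the recurrence delivers from the base cases), the symmetry $\int Q^\alpha[wC_n^{(\lambda)}]\,wC_m^{(\lambda)}\,\mathrm{d}x = \int Q^\alpha[wC_m^{(\lambda)}]\,wC_n^{(\lambda)}\,\mathrm{d}x$ forces $Q^\alpha[wC_n^{(\lambda)}]$ to be $w$-orthogonal to every $C_m^{(\lambda)}$ with $m<n$, hence a multiple of $C_n^{(\lambda)}$; only the eigenvalue recursion would then remain.
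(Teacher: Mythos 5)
The paper offers no proof of this lemma at all --- it simply cites Popov --- so your proposal is a genuinely different route: a self-contained derivation from the recurrence machinery the paper itself builds. The mathematics checks out. The base values are correct ($c_0=\pi/\cos(\tfrac{\pi\alpha}{2})$ and, via Corollary~\ref{corr:Qn1case} plus the oddness argument, $c_1=-\alpha c_0=2\lambda c_0$); under $\alpha=-2\lambda$ the vanishing of the $C_{n-1}^{(\lambda)}$ contribution is equivalent to $c_n=\tfrac{n+2\lambda-1}{n}c_{n-1}$, which is exactly the forward relation $c_{n+1}=\tfrac{n+2\lambda}{n+1}c_n$ shifted by one index, so the induction closes; and $\tfrac{(-\alpha)_n}{n!}=\tfrac{(-1)^n}{n\,B(\alpha+1-n,\,n)}$ as claimed. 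The obstacle you anticipate at $n=0$ is in fact already resolved inside the paper: specialising \eqref{eq:n0casegeneral} to $k=0$, $\lambda=-\tfrac{\alpha}{2}$ makes the second parameter of the ${}_2F_1$ vanish, so the integral is the constant $\Gamma(\tfrac{1+\alpha}{2})\Gamma(\tfrac{1-\alpha}{2})=\pi/\cos(\tfrac{\pi\alpha}{2})$ by the reflection formula, and the validity window $\alpha\in(-1,2k+1)$ with $k=0$ is precisely the hypothesis $\alpha\in(-1,1)$. What your route buys over the paper's is independence from the external reference and an explanation of \emph{why} $\lambda=-\tfrac{\alpha}{2}$ is the diagonalising choice; what the citation buys is brevity.

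The one genuine loose end is the seeding of the induction. Theorem~\ref{theorem:twotermrecFullop} is stated only for $n\geq 2$, so your inductive step produces $Q^{\alpha}[wC_{n+1}^{(\lambda)}]$ only for $n+1\geq 3$ and never delivers the $n=2$ case from the $n=0,1$ base cases. You need either to verify that the recurrence extends to $n=1$ (the proof of Lemma~\ref{lemma:twotermrecRLleft} appears to go through there, but this must be checked, not assumed), or to compute $Q^{\alpha}[wC_2^{(\lambda)}]$ directly, e.g.\ from \eqref{eq:n0casegeneral} with $k=1$ combined with \eqref{eq:ultrasphaltrecurrence} at index $0$. Your closing alternative --- degree-$\leq n$ polynomiality plus self-adjointness of $Q^{\alpha}$ forces $w$-orthogonality to all lower-degree $C_m^{(\lambda)}$ and hence diagonality --- is correct and elegantly avoids the cancellation bookkeeping, but it inherits the same seeding issue, since polynomiality for all $n$ is itself propagated by the recurrence.
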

\subsubsection{Banded operators for higher powers $\mathbf{\alpha\in (-1,m)}$}
The above discussion is sufficient for treating equilibrium problems with power law kernels where the powers remain within the interval $(-1,1)$, with all the involved operators on the ultraspherical polynomials being diagonal. While highly efficient when $\alpha \in (-1,1)$, it is too restrictive for observing many interesting phenomena. Fortunately, by trading diagonal for more general banded operators via the recurrence relationships above, we can extend the above to cases where  $\alpha \in (-1,m)$, $m\in\mathbb{N}$ and thus treat arbitrarily high powers as long as the integrals converge and solutions exist.
\begin{lemma}\label{lemma:n01cases}
With the $\lambda$ parameter chosen such that $\lambda + \tfrac{\alpha}{2}$ is a non-negative integer with $\alpha>-1$ and $\lambda>-\tfrac{1}{2}$, the $n=0$ and $n=1$ cases required for the general recurrence in Theorem~\ref{theorem:twotermrecFullop} evaluate to  polynomials.
\begin{proof}
For the general powers case this will require making use of the following general result previously derived in the context of fractional Laplacians \cite{huang_explicit_2014,carrillo_explicit_2016}:
\begin{equation*}
\int_{B_R} |x-y|^\alpha (R^2-|y|^2)^{\frac{2k-\alpha-d}{2}} \mathrm{d}y = \frac{\pi^{\tfrac{d}{2}}}{\Gamma(\frac{d}{2})}B\left(\tfrac{\alpha+d}{2},\tfrac{2k+2-\alpha-d}{2} \right) R^{2k} {}_2 F_1\left(-\tfrac{\alpha}{2},-k;\tfrac{d}{2};\tfrac{|x|^2}{R^2} \right),
\end{equation*}
where ${}_2 F_1(a,b;c;z)$ denotes the Gauss hypergeometric function \cite[15.2.1]{nist_2018}. For the case $d=1$ and $R=1$ we thus have
\begin{equation}\label{eq:n0casegeneral}
\int_{-1}^1 |x-y|^\alpha (1-y^2)^{\frac{2k-\alpha-1}{2}} \mathrm{d}y = B\left(\tfrac{\alpha+1}{2},\tfrac{2k+1-\alpha}{2} \right) {}_2 F_1\left(-\tfrac{\alpha}{2},-k;\tfrac{1}{2};x^2 \right).
\end{equation}
If the second (or by symmetry the first) parameter in ${}_2 F_1$ is a negative integer $-n$ and the third parameter is not a non-positive integer then the Gauss hypergeometric function is a polynomial of order $n$ \cite[15.2.4]{nist_2018}. Explicitly: 
\begin{align}\label{eq:2f1polynomial}
{}_2 F_1(a,-n;c;z) = \sum_{j=0}^n (-1)^j \begin{pmatrix}
           n \\
           j
         \end{pmatrix} \frac{(a)_j}{(c)_j}z^j,
\end{align}
with $n\in\mathbb{N}$. The above expressions have domains of validity $\alpha \in (-d,2+2k-d)$ and $\alpha \in (-1,2k+1)$ respectively, cf. \cite{carrillo_explicit_2016}. The required minimal value of $k$ to be able to integrate with a given power of $\alpha>-1$ is thus determined by the expression $k > \frac{\alpha-1}{2}$. The condition is thus that the exponent $\lambda-\tfrac{1}{2} = \tfrac{2k-\alpha-1}{2}$ of the weight satisfies $\lambda>-\frac{1}{2}$. As seen in the previous section, the form of the $n=1$ case primarily revolves around the expression
\begin{equation*}
\int_{x}^{1} (y-x)^{\alpha+1} (1-y^2)^{\lambda-\frac{1}{2}} \mathrm{d}y -\int_{-1}^{x} (x-y)^{\alpha+1} (1-y^2)^{\lambda-\frac{1}{2}} \mathrm{d}y.
\end{equation*}
By Corollary \ref{corr:Qn1case} we see that if this expression is a polynomial for a simultaneous $\lambda$ as the $n=0$ case then the full $n=1$ expression will also evaluate to a polynomial. The above sum of integrals appearing in the $n=1$ case can be shown, either by using known integral representations of the hypergeometric function and a series of appropriate variable substitutions, to evaluate the expression
\begin{equation}
-2^{1-\alpha }\sqrt{\pi } \tfrac{\Gamma (\alpha +2) \Gamma \left(\lambda +\frac{1}{2}\right)}{\Gamma \left(\frac{\alpha }{2}\right) \Gamma \left(\frac{\alpha }{2}+\lambda +1\right)}\tfrac{\left(2 x^2 (\alpha +\lambda +1)\right) \, _2F_1\left(-\frac{\alpha }{2},-\lambda-\frac{\alpha }{2} ;\frac{1}{2};x^2\right)+\left(x^2-1\right) \, _2F_1\left(-\frac{\alpha }{2},-\lambda-\frac{\alpha }{2} ;-\frac{1}{2};x^2\right)}{\alpha  (\alpha +1) (\alpha +2 \lambda +1) x},
\end{equation}
again with conditions $\alpha>-1$ and $\lambda>-\frac{1}{2}$. By changing variables in  \eqref{eq:n0casegeneral} we can rewrite the $n=0$ expression into a form which is more useful for comparison:
\begin{equation*}
\int_{-1}^1 |x-y|^\alpha (1-y^2)^{\lambda-\frac{1}{2}} \mathrm{d}y = \tfrac{\Gamma \left(\frac{\alpha +1}{2}\right) \Gamma (\lambda+\frac{1}{2}) \, }{\Gamma \left(\lambda+\frac{\alpha }{2}+1\right)}\, _2F_1\left(-\tfrac{\alpha }{2},-\lambda-\tfrac{\alpha}{2};\tfrac{1}{2};x^2\right).
\end{equation*}
In this notation the required minimal value of $\lambda$ to be able to integrate with a given power of $\alpha>-1$ is given by the condition $\lambda > -\frac{1}{2}$. In order for the right-hand side to be a polynomial we require that $\lambda+\frac{\alpha}{2}$ be non-negative integer valued. The hypergeometric functions appearing in the $n=1$ case are found to automatically and simultaneously be polynomials if $\lambda+\frac{\alpha}{2}$ is a non-negative integer along with the mentioned integrability conditions $\lambda > -\frac{1}{2}$ and $\alpha > -1$. What remains to be shown is that when the hypergeometric functions are polynomials all constant summation terms in the numerator cancel so that the division by $x$ still yields a polynomial. For the terms in the numerator multiplied by factors involving $x^2$ this evidently poses no problem and what remains to be investigated is the $j=0$ term of the series
\begin{align*}
{}_2F_1 &\left(-\tfrac{\alpha }{2},-\lambda-\tfrac{\alpha}{2};\tfrac{1}{2};x^2\right)-{}_2F_1\left(-\tfrac{\alpha }{2},-\lambda-\tfrac{\alpha}{2};-\tfrac{1}{2};x^2\right) \\
&= \sum_{j=0}^{\lambda+\frac{\alpha}{2}} (-1)^j \begin{pmatrix}
           \lambda+\tfrac{\alpha}{2} \\
           j
         \end{pmatrix} \frac{(-\frac{\alpha }{2})_j}{(\frac{1}{2})_j}x^{2j} - \sum_{j=0}^{\lambda+\frac{\alpha}{2}} (-1)^j \begin{pmatrix}
           \lambda+\tfrac{\alpha}{2} \\
           j
         \end{pmatrix} \frac{(-\frac{\alpha }{2})_j}{(-\frac{1}{2})_j}x^{2j}\\
                  &= \sum_{j=1}^{\lambda+\frac{\alpha}{2}} (-1)^j \begin{pmatrix}
           \lambda+\tfrac{\alpha}{2} \\
           j
         \end{pmatrix} \left(-\frac{\alpha }{2}\right)_j \left(\frac{1}{{(\frac{1}{2})_j}}-\frac{1}{{(-\frac{1}{2})_j}} \right) x^{2j}.
\end{align*}
As can be seen from \eqref{eq:2f1polynomial} the 0th order terms are equal to $1$, meaning that the above difference only involves terms where $j=1$ or higher, thus allowing division by $x$ while remaining in finite polynomial form. Explicitly:
\begin{align*}
\sum_{j=1}^{\lambda+\frac{\alpha}{2}} (-1)^j \begin{pmatrix}
           \lambda+\tfrac{\alpha}{2} \\
           j
         \end{pmatrix} \left(-\frac{\alpha }{2}\right)_j \left(\frac{1}{{(\frac{1}{2})_j}}-\frac{1}{{(-\frac{1}{2})_j}} \right) x^{2j-1}.
\end{align*}
As all the other terms appearing in the $n=1$ expression are polynomials when $\lambda+\tfrac{\alpha}{2}$ is a non-negative integer this concludes the proof.
\end{proof}
\end{lemma}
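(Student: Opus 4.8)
The plan is to treat the two cases separately, reducing each to the closed-form integral \eqref{eq:n0casegeneral} and then invoking the termination criterion \eqref{eq:2f1polynomial} for the Gauss hypergeometric function. For the $n=0$ case I would note that $C_0^{(\lambda)}\equiv 1$, so $Q^\alpha[wC_0^{(\lambda)}](x)=\int_{-1}^1|x-y|^\alpha(1-y^2)^{\lambda-\frac12}\,\mathrm{d}y$ is exactly the integral \eqref{eq:n0casegeneral} with $k=\lambda+\tfrac{\alpha}{2}$. The integrability conditions $\alpha>-1$ and $\lambda>-\tfrac12$ guarantee the Beta prefactor is finite, and since $k=\lambda+\tfrac{\alpha}{2}$ is assumed to be a non-negative integer, the second parameter $-k$ of ${}_2F_1(-\tfrac{\alpha}{2},-k;\tfrac12;x^2)$ is a non-positive integer while the third parameter $\tfrac12$ is not; by \eqref{eq:2f1polynomial} the series terminates and the $n=0$ case is a polynomial in $x^2$ of degree $k$.

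For the $n=1$ case I would start from Corollary \ref{corr:Qn1case}, which expresses $Q^\alpha[wC_1^{(\lambda)}]$ as the sum of $2\lambda x\,Q^\alpha[w](x)$ and the antisymmetric combination $2\lambda D(x)$ with $D(x):=\int_x^1(y-x)^{\alpha+1}w\,\mathrm{d}y-\int_{-1}^x(x-y)^{\alpha+1}w\,\mathrm{d}y$. The first summand is simply $x$ times the $n=0$ polynomial just obtained, hence a polynomial. The crux is therefore to show that $D(x)$ is a polynomial. Rather than evaluating $D$ directly and fighting the spurious $\tfrac1x$ factor that a naive closed form produces, I would recognise $D$ as a derivative: differentiating $Q^{\alpha+2}[w](x)=\int_x^1(y-x)^{\alpha+2}w+\int_{-1}^x(x-y)^{\alpha+2}w$ under the integral sign, the boundary terms vanish because $\alpha+2>0$, and one finds $\frac{\mathrm{d}}{\mathrm{d}x}Q^{\alpha+2}[w](x)=-(\alpha+2)D(x)$.

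This reduces the $n=1$ case to the $n=0$-type result at the shifted power $\alpha+2$: the matching index in \eqref{eq:n0casegeneral} becomes $k''=\lambda+\tfrac{\alpha}{2}+1$, which is again a non-negative integer whenever $\lambda+\tfrac{\alpha}{2}$ is, so $Q^{\alpha+2}[w]$ is a polynomial in $x^2$ and hence $D=-\tfrac{1}{\alpha+2}\frac{\mathrm{d}}{\mathrm{d}x}Q^{\alpha+2}[w]$ is an odd polynomial. Assembling the two summands shows $Q^\alpha[wC_1^{(\lambda)}]$ is a polynomial, completing the argument.

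I expect the main obstacle to be precisely the $n=1$ case, and specifically the bookkeeping that separates the genuinely polynomial antisymmetric part $D$ from the symmetric part $Q^{\alpha+2}[w]$. A direct hypergeometric evaluation of $D$ introduces a $\tfrac1x$ factor, and to conclude polynomiality that way one must verify that the constant term of the resulting numerator cancels so that the division by $x$ is harmless — a somewhat delicate check on the $j=0$ terms of the two contiguous ${}_2F_1$ series. The derivative reformulation above sidesteps this cancellation entirely, since it exhibits $D$ as $-\tfrac{1}{\alpha+2}$ times the derivative of a manifestly polynomial even function, and I would adopt it to make the polynomiality transparent and to avoid any sign or normalisation pitfalls in the explicit closed form.
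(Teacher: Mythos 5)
Your proposal is correct, and for the $n=1$ case it takes a genuinely different route from the paper. The $n=0$ case is handled identically in both: match the weight exponent to $k=\lambda+\tfrac{\alpha}{2}$ in \eqref{eq:n0casegeneral} and invoke termination of ${}_2F_1(-\tfrac{\alpha}{2},-k;\tfrac12;x^2)$. For $n=1$, both arguments start from Corollary \ref{corr:Qn1case}, but the paper then evaluates the antisymmetric combination $D(x)$ directly to a closed hypergeometric form carrying an explicit $\tfrac1x$ factor, and must verify by hand that the $j=0$ terms of the two contiguous series ${}_2F_1(\cdot,\cdot;\tfrac12;x^2)$ and ${}_2F_1(\cdot,\cdot;-\tfrac12;x^2)$ cancel so that the division by $x$ is harmless — exactly the delicate check you anticipated. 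Your identity $\frac{\mathrm{d}}{\mathrm{d}x}Q^{\alpha+2}[w](x)=-(\alpha+2)D(x)$ is valid (the boundary terms vanish since $\alpha+2>1$, and differentiation under the integral sign is justified because $\alpha+1>0$ makes the differentiated integrand integrable near $y=x$), and the shifted index $k''=\lambda+\tfrac{\alpha}{2}+1$ is again a non-negative integer with the validity condition $\alpha+2<2k''+1$ reducing to $\lambda>-\tfrac12$, so $Q^{\alpha+2}[w]$ is an even polynomial and $D$ is an odd polynomial with no cancellation to check. What the paper's longer computation buys is the explicit hypergeometric closed form for the $n=1$ case with \emph{arbitrary} $\lambda$ and $\alpha$, which is reused later (Section on approximate bandedness, and the even-integer corollary) where $\lambda+\tfrac{\beta}{2}$ is not an integer and the expression must be numerically approximated rather than shown to be polynomial; your argument proves the lemma as stated more transparently but does not by itself supply that formula.
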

\begin{remark}
Lemma \ref{lemma:n01cases} can also be used to show that the operator can in fact only be diagonal in the special case $\alpha \in (-1,1)$ discussed above. A diagonal operator would require the $n=0$ and $n=1$ cases to be polynomials of order $0$ and $1$ respectively, which for the appearing hypergeometric functions ${}_2 F_1(a,b;c;z)$ means that either $a=0$ or $b=0$. If $a=0$ for all the appearing hypergeometric functions we have $\alpha=0$. If $b=0$ then $\lambda = -\frac{\alpha}{2}$ which can only satisfy the integrability conditions $\lambda > -\frac{1}{2}$ and $\alpha > -1$ when $\alpha \in (-1,1)$.
\end{remark}
\begin{remark}
A good choice strategy for non-integer $\alpha > -1$ which results in a simultaneously polynomial right hand-side for the $n=0$ and $n=1$ cases is
\begin{equation}\label{eq:lambdaval}
\lambda = \begin{cases}
    \floor*{\frac{\alpha}{2}}-\frac{\alpha}{2},& \text{if } \floor*{\frac{\alpha}{2}}-\frac{\alpha}{2} > -\tfrac{1}{2}\\
    \ceil*{\frac{\alpha}{2}}-\frac{\alpha}{2},              & \text{otherwise.}
\end{cases}
\end{equation}
This always results in $\lambda + \frac{\alpha}{2}$ being a non-negative integer when $\alpha>-1$ and while it is not a unique choice it is the smallest choice to do so while still always satisfying the second integrability condition $\lambda > -\frac{1}{2}$ as long as $\alpha>-1$. Numerical experiments suggest that choosing lower admissible weight parameters also results in more stable solutions, as the boundary singularity is better approximated.\end{remark}
With the $n=0$ and $n=1$ cases in polynomial form we are now able to investigate the structure of the full $Q^\alpha$ operator acting on a coefficient vector. The number of non-zero bands is determined by the highest polynomial orders appearing in the $n=0$ and $n=1$ cases. In Figure \ref{fig:spy_diagonalandbanded} we present spy plots of operators obtained for two different values of $\alpha$.
\begin{figure}
     \subfloat[$\alpha = 3.9$]
    {{ \centering \includegraphics[width=6cm]{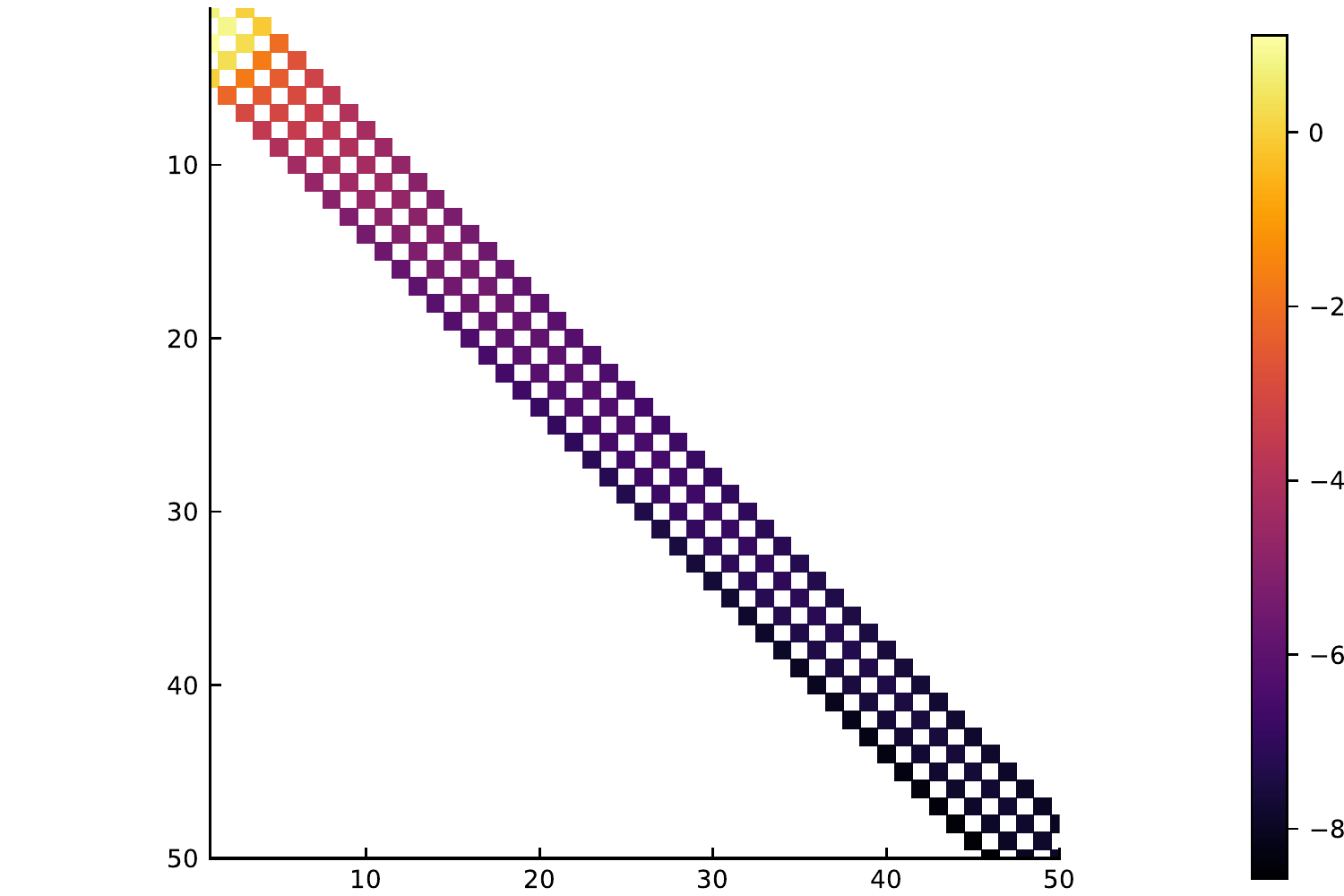} }}
         \subfloat[$\alpha = 2.5$]
    {{ \centering \includegraphics[width=6cm]{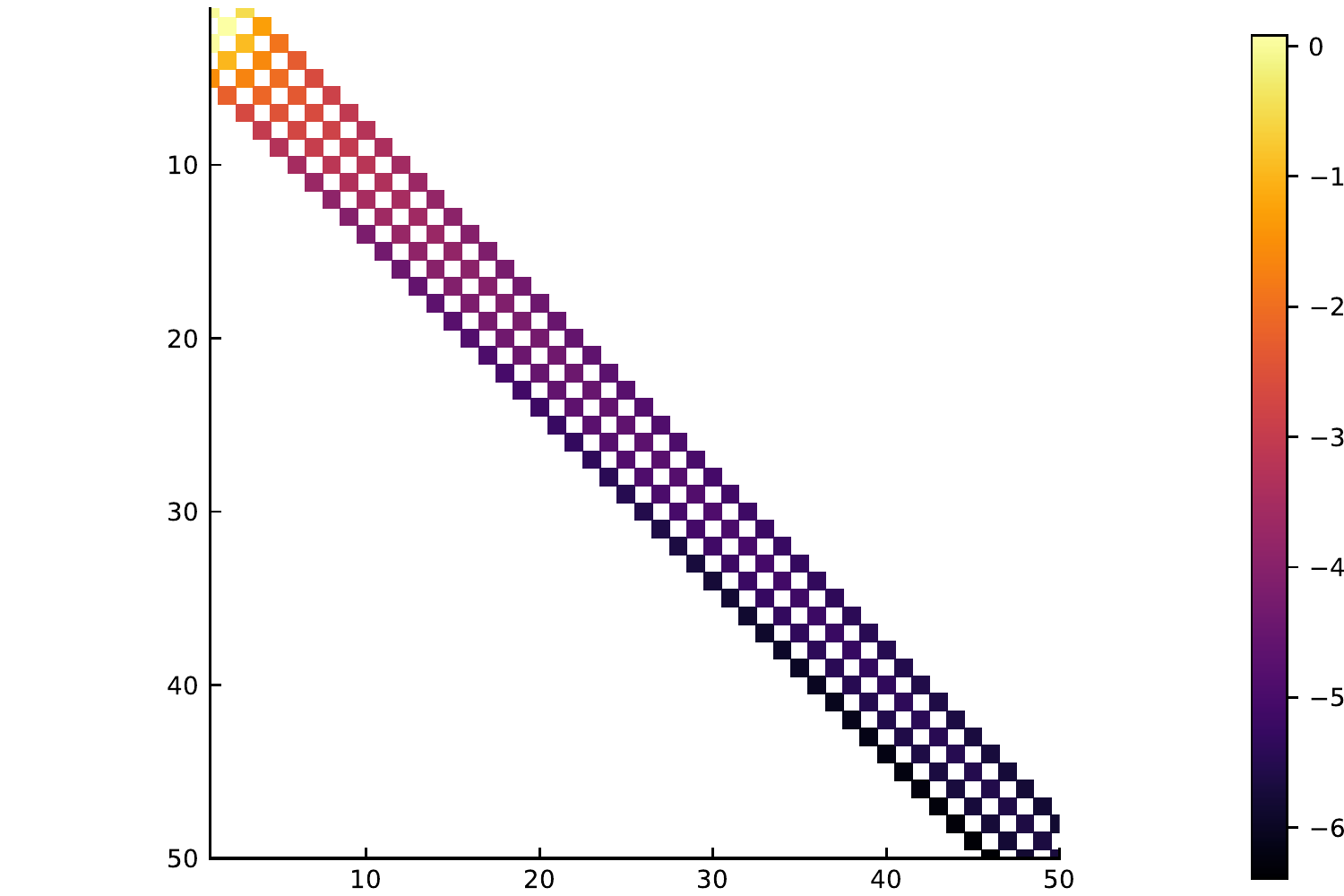} }}
    \caption{Banded operators for different values of $\alpha$ with $\lambda$ chosen as in \eqref{eq:lambdaval}. The legend is logarithmic, indicating order of magnitude of the entries.}%
    \label{fig:spy_diagonalandbanded}%
\end{figure}
\subsubsection{Sparsity structure for even integer $\alpha$ with arbitrary $\lambda$}\label{sec:bandedpowers}
A more specialized but nevertheless useful result when $\alpha$ is an even integer follows as a corollary of the above proof. Importantly, the choice of $\lambda$ becomes arbitrary for even integer $\alpha$.
\begin{corollary}
With $\alpha = 2n$, $n \in \mathbb{N}$ and for any $\lambda>-\tfrac{1}{2}$, the $n=0$ and $n=1$ cases required for the general recurrence in Theorem \ref{theorem:twotermrecFullop} evaluate to finite polynomials.
\begin{proof}
The proof follows the same steps as the proof of Lemma \ref{lemma:n01cases}. As a result of the Pochhammer symbols in the definition of the hypergeometric series the appearing hypergeometric functions ${}_2 F_1(a,b;c;z)$ are finite polynomials if either of the parameters $a$ or $b$ are negative integers. Observing that $a = -\frac{\alpha}{2}$ in all appearing hypergeometric functions in the $n=0$ and $n=1$ case concludes the proof.
\end{proof}
\end{corollary}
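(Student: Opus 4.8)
The plan is to rerun the argument of Lemma~\ref{lemma:n01cases} almost verbatim, but to force the Gauss hypergeometric series to terminate through a different parameter. The decisive observation is that every hypergeometric function appearing in the closed forms of the $n=0$ and $n=1$ cases shares the same \emph{first} parameter $a=-\tfrac\alpha2$: for the $n=0$ case this is visible in
\[
\int_{-1}^1 |x-y|^\alpha (1-y^2)^{\lambda-\frac{1}{2}}\,\mathrm{d}y = \tfrac{\Gamma(\frac{\alpha+1}{2})\Gamma(\lambda+\frac12)}{\Gamma(\lambda+\frac\alpha2+1)}\,{}_2F_1\!\left(-\tfrac\alpha2,-\lambda-\tfrac\alpha2;\tfrac12;x^2\right),
\]
and for the $n=1$ case it is visible in the two functions ${}_2F_1(-\tfrac\alpha2,-\lambda-\tfrac\alpha2;\pm\tfrac12;x^2)$ that enter through Corollary~\ref{corr:Qn1case}. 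Setting $\alpha=2n$ turns $a=-n$ into a non-positive integer, so the termination mechanism of \eqref{eq:2f1polynomial} — which by the symmetry of ${}_2F_1$ in its first two arguments may be read off either $a$ or $b$ — now operates through $a$ instead of through $b=-\lambda-\tfrac\alpha2$. This is exactly what removes every integrality constraint on $\lambda$.

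First I would confirm that the two remaining hypotheses of the termination rule survive for all admissible $\lambda$. The third parameters are $c=\tfrac12$ and $c=-\tfrac12$, neither of which is a non-positive integer, so no denominator Pochhammer symbol in \eqref{eq:2f1polynomial} vanishes; and the domain-of-validity condition for \eqref{eq:n0casegeneral}, namely $\alpha<2\lambda+2n+1$, collapses to $\lambda>-\tfrac12$, which is precisely the standing assumption. Hence the $n=0$ evaluation is a polynomial (of degree $2n$ in $x$) for every $\lambda>-\tfrac12$.

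For the $n=1$ case I would use Corollary~\ref{corr:Qn1case} to split it into $2\lambda x$ times the $n=0$ evaluation, which is now an odd polynomial, plus $2\lambda$ times the difference of one-sided integrals. The latter is the rational expression of Lemma~\ref{lemma:n01cases} whose numerator is a polynomial in $x^2$ divided by $x$; once $a=-n$ forces the series to truncate, the only thing that can obstruct polynomiality is a surviving constant term, and this is cancelled exactly as before because the $j=0$ coefficient of every ${}_2F_1$ equals $1$. The one point I would check explicitly — and the only mild obstacle here — is that this $j=0$ cancellation is insensitive to which parameter induced the termination: it depends solely on the normalisation ${}_2F_1(a,b;c;0)=1$, which holds for arbitrary parameters, so the cancellation argument of Lemma~\ref{lemma:n01cases} transfers without change and the division by $x$ again yields a polynomial.
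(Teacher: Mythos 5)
Your proposal is correct and follows essentially the same route as the paper's own (very brief) proof: both rest on the single observation that every ${}_2F_1$ in the $n=0$ and $n=1$ closed forms has first parameter $a=-\tfrac{\alpha}{2}$, so $\alpha=2n$ forces termination through $a$ rather than through $b=-\lambda-\tfrac{\alpha}{2}$, removing the integrality constraint on $\lambda$. Your additional checks (that $c=\pm\tfrac12$ is never a non-positive integer, and that the $j=0$ cancellation enabling division by $x$ depends only on ${}_2F_1(a,b;c;0)=1$) are sound elaborations of the paper's appeal to ``the same steps as Lemma \ref{lemma:n01cases}'' rather than a different argument.
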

Due to cancellations which occur when $\alpha$ is an even integer, one finds that the operator then has sparsity in the form of a finite top-left-located triangle with side-length $\alpha+1$, see Figure \ref{fig:evenintegerop}. For attractive-repulsive problems of the form
\begin{align*}
\int_{-1}^{1} &| x-y |^{\alpha} (1-y^2)^{\lambda-\frac{1}{2}} C_n^{(\lambda)}(y) \mathrm{d}y - \int_{-1}^{1} | x-y |^{\beta} (1-y^2)^{\lambda-\frac{1}{2}} C_n^{(\lambda)}(y) \mathrm{d}y,
\end{align*}
where one of the powers $\alpha$ and $\beta$ is even integer valued, the full operator may be split into the two respective parts. This can be written as
\begin{equation*}
Q^{\alpha}\left[wC_n^{(\lambda)}\right]-Q^{\beta}\left[wC_n^{(\lambda)}\right],
\end{equation*} 
where we refer to the operators respectively as the $\alpha$-operator and $\beta$-operator as shorthands. The parameter $\lambda$ is then chosen to make the arbitrary power part of the operator banded as discussed above.
\begin{figure}
     \subfloat[$\alpha=2$.]
    {{ \centering \includegraphics[width=6cm]{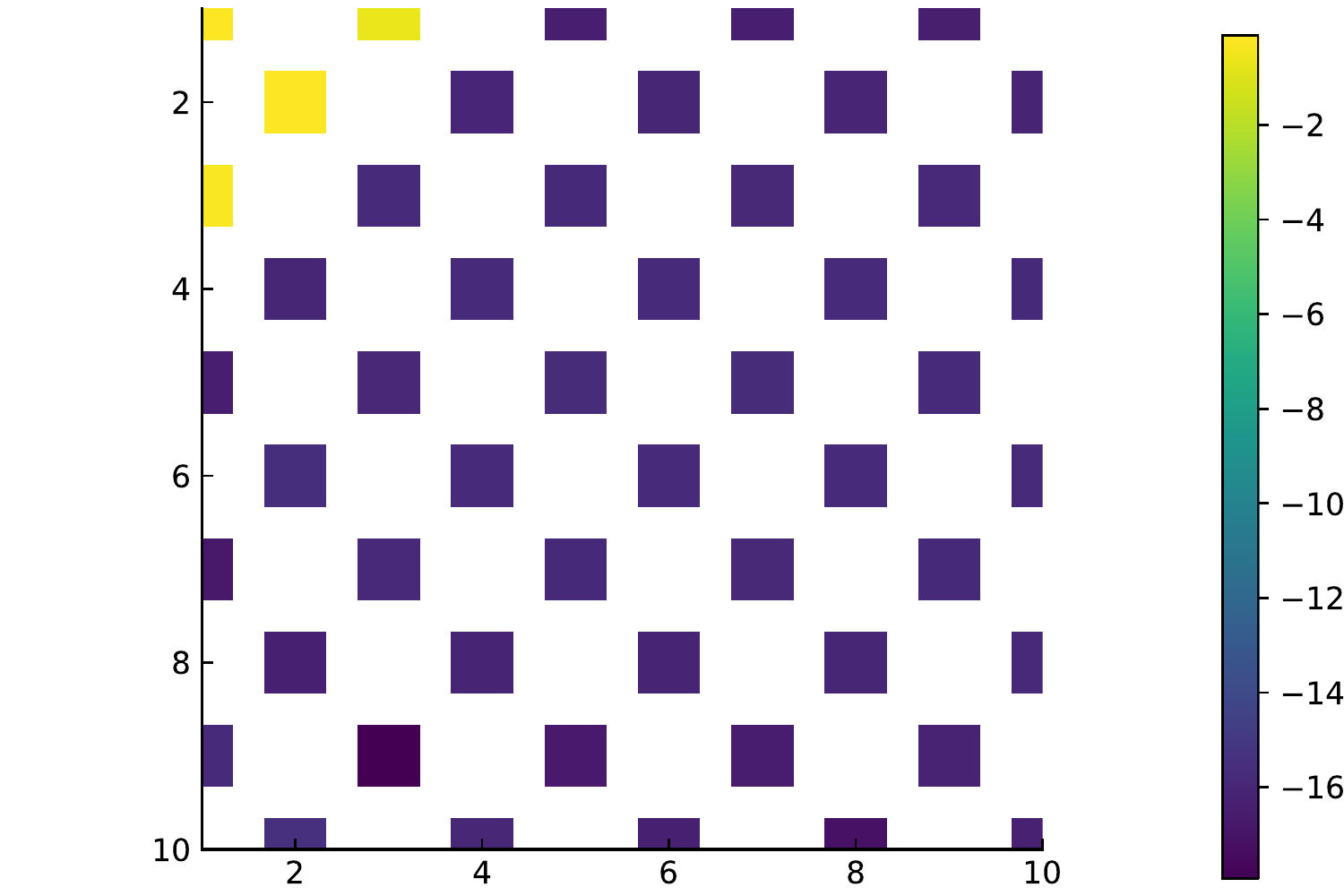} }}
         \subfloat[$\alpha=4$.]
    {{ \centering \includegraphics[width=6cm]{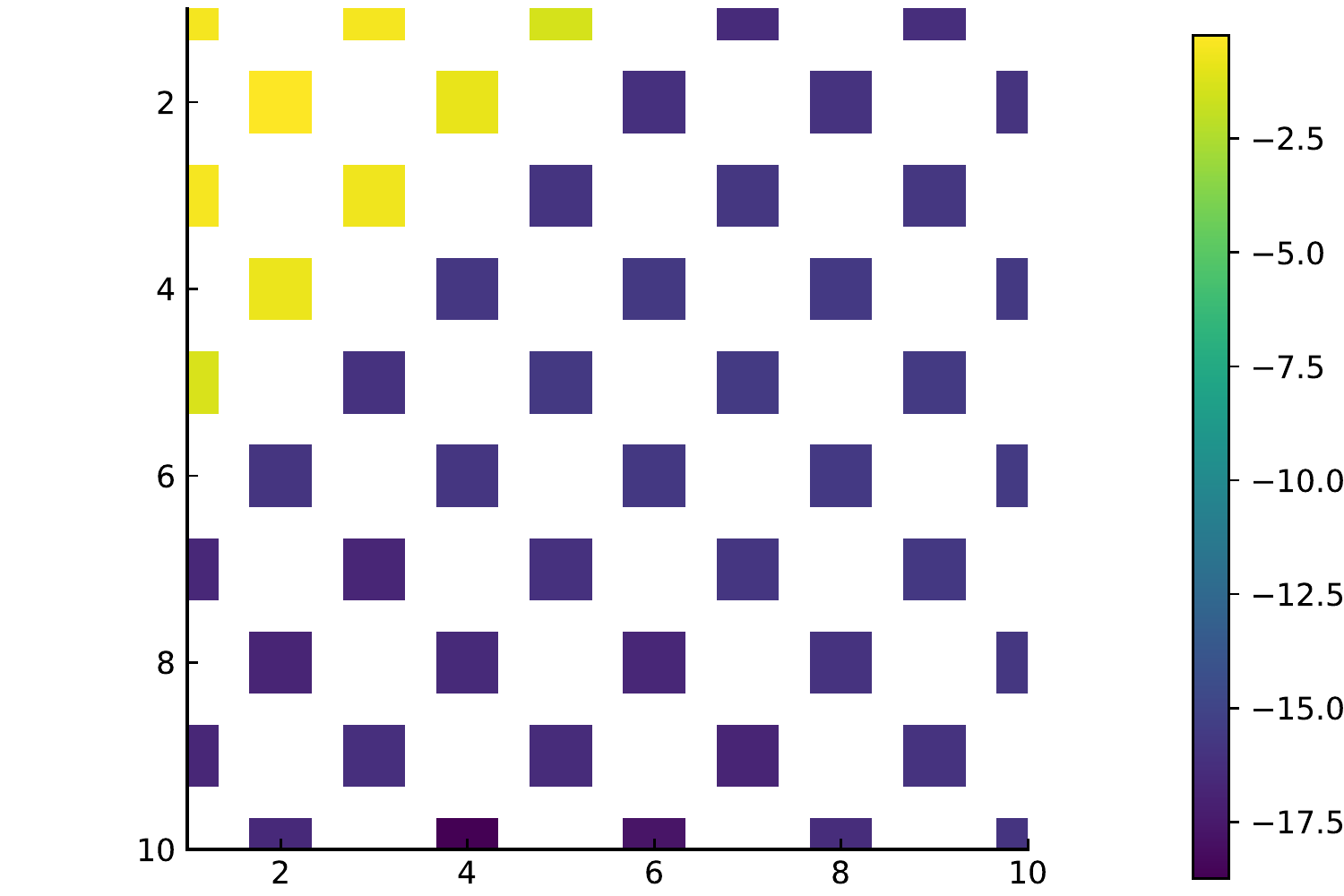} }}
    \caption{Spy plots for even integer valued $\alpha$ with $\lambda=\frac{\alpha}{2}$. The legend is logarithmic, indicating order of magnitude of the entries.}%
    \label{fig:evenintegerop}%
\end{figure}

\subsubsection{Approximate bandedness for simultaneous powers $\mathbf{\alpha, \beta}\in (-1,m)$}
In this section we discuss how to obtain operators which may be well approximated as \emph{simultaneously} banded for general arbitrary powers $\alpha$ and $\beta$ for applications in attractive-repulsive kernel problems, i.e. problems of the form
\begin{equation*}
Q^{\alpha}\left[wC_n^{(\lambda)}\right]-Q^{\beta}\left[wC_n^{(\lambda)}\right].
\end{equation*} 
The previous section discussed a method to accomplish this for the special cases where one of the two parameters is even integer-valued. Without loss of generality we will henceforth assume that $\lambda$ is chosen based on $\alpha$, i.e. the $\alpha$-operator will be assumed to be of banded form as discussed above. We will also posit that neither of the two parameters are integer valued. The question then becomes what the form of the $\beta$ operator is for a $\lambda$ which has not been adjusted to yield a polynomial---and the answer is largely already contained in the previous sections: the recurrence relationship we proved for such integral operators made no claims about a relationship between the parameter and $\lambda$, such a relationship was only required when we wanted it to result specifically in polynomial form. The above results for the general $n=0$ and $n=1$ cases hold for generic $\beta$ and $\lambda$. Thus one approach we may take is to approximate the non-polynomial functions in the $n=0$ and $n=1$ cases via the known expressions and then compute the remaining steps recursively. As neither of the initial two steps are polynomial the resulting operator is dense. In practice, spectral approximation of functions are truncated at finite polynomial degree, resulting in an upper triangular matrix along with a number of sub-diagonals determined by the approximation order of choice. The form of the recurrence relationship also results in most of the super diagonals tending to zero, resulting in an approximately banded operator. For the appropriate special cases such as when $\alpha$ or $\beta$ are an even integer the method for attractive-repulsive kernels reduces to the previous results as both operators can be chosen to be banded. If a desired bandwidth of approximation is already known for the $\beta$-operator, then it may be generated as a banded approximation directly by approximating the initial hypergeometric functions as polynomials of order corresponding to the desired bandwidth, thus omitting the computationally wasteful step of generating a dense operator first and then cropping it. In Figure \ref{fig:spy_approxbanded} we present spy plots of example $\beta$-operators obtained for different values of $\alpha$ and $\beta$ in a basis in which $\alpha$ is banded, including demonstrations that this method for generating the operators reduces to the diagonal and banded operators discussed in previous sections in the appropriate special cases. In general, the choice of $\lambda$ has an important impact on the accuracy of the approximation and the speed of convergence with increasing polynomial orders, meaning that despite the more favourable numerical structure, the smallest admissible $\lambda$ is generally to be preferred, see also Figure \ref{fig:spy_approxbanded_tradeoff}.
\begin{figure}
     \subfloat[$\alpha=0.8$.]
    {{ \centering \includegraphics[width=4.2cm]{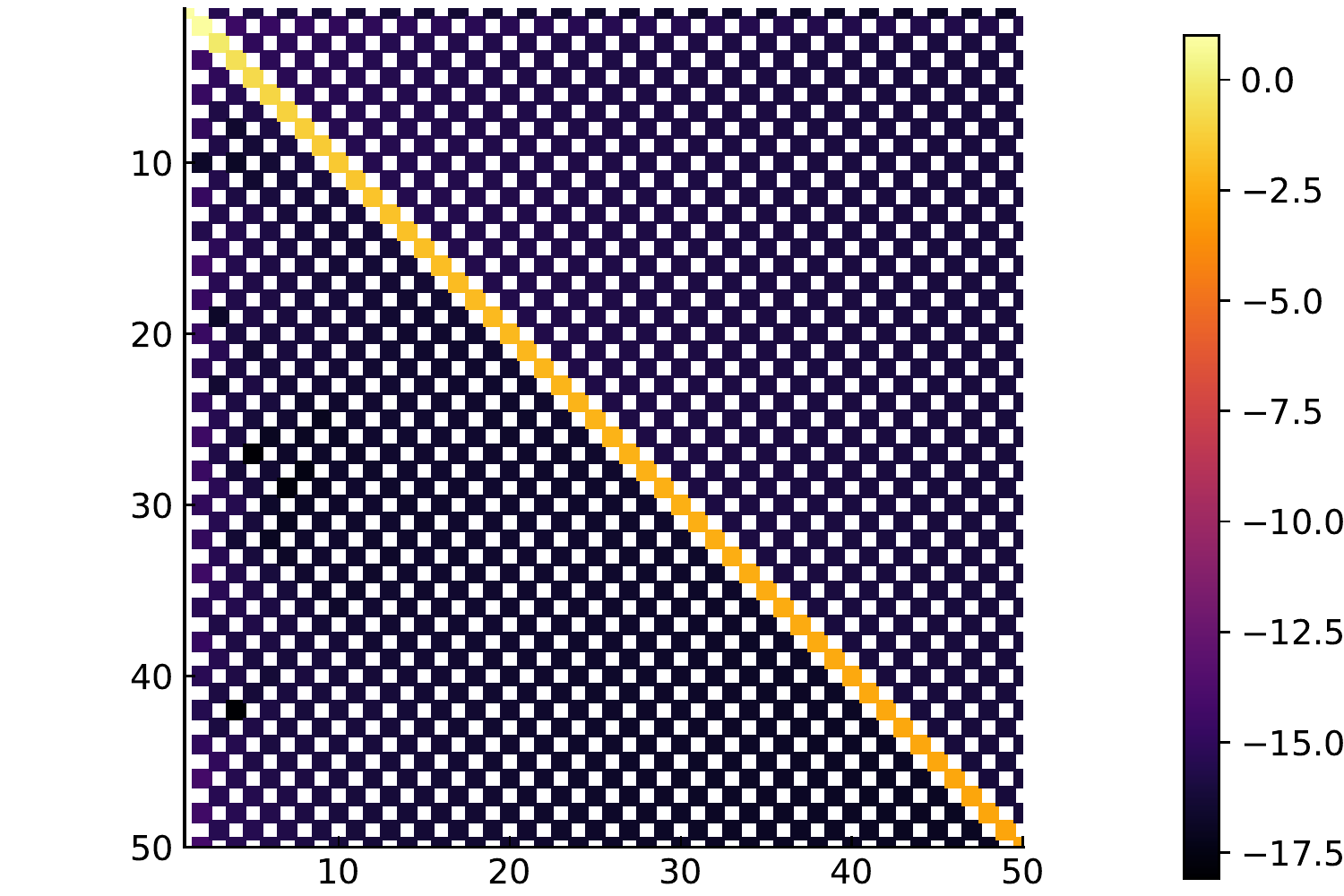} }}
         \subfloat[$\alpha=2.5$.]
    {{ \centering \includegraphics[width=4.2cm]{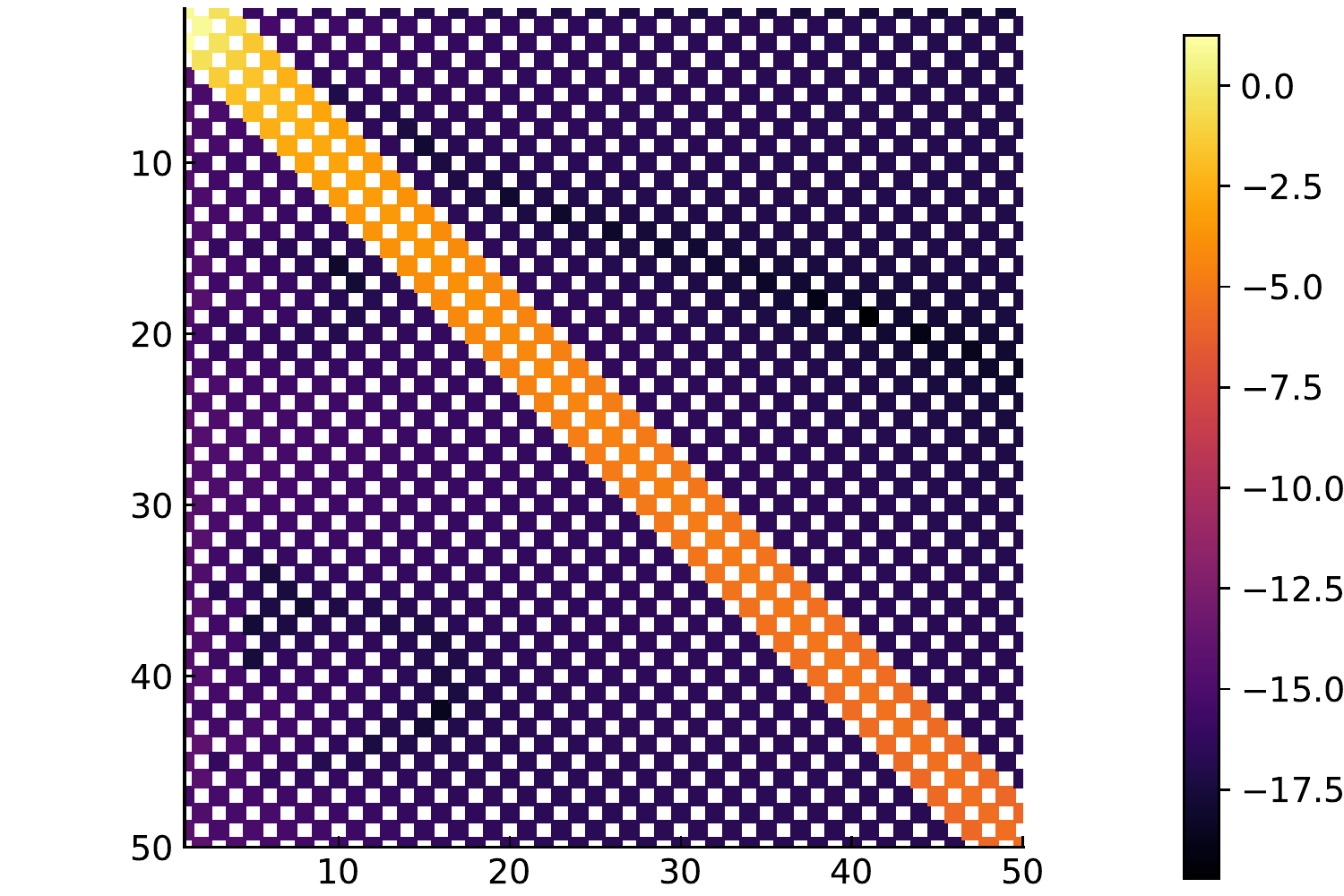} }}
             \subfloat[$\alpha=3.8, \beta=1.7$.]
    {{ \centering \includegraphics[width=4.2cm]{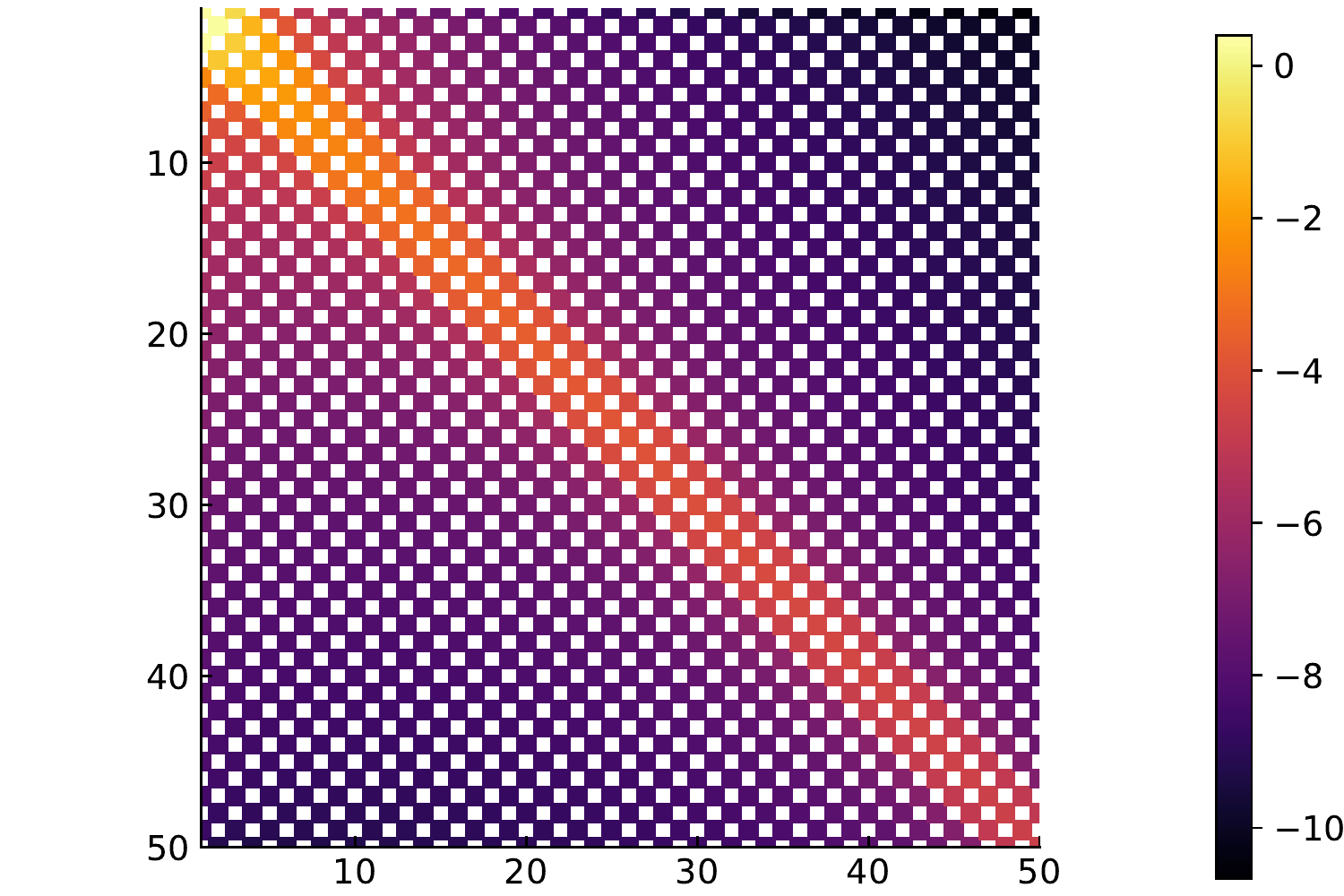} }}
    \caption{Spy plots of operators for different $\alpha$-banding bases. The approx. banded operator method includes the previously discussed diagonal (A) and banded (B) operators as special cases. (C) shows a typical generic attractive-repulsive example. The legend is logarithmic, indicating order of magnitude of the entries.}%
    \label{fig:spy_approxbanded}%
\end{figure}
\begin{figure}
     \subfloat[$\lambda = \floor*{\frac{\alpha}{2}}-\frac{\alpha}{2}$]
    {{ \centering \includegraphics[width=4.2cm]{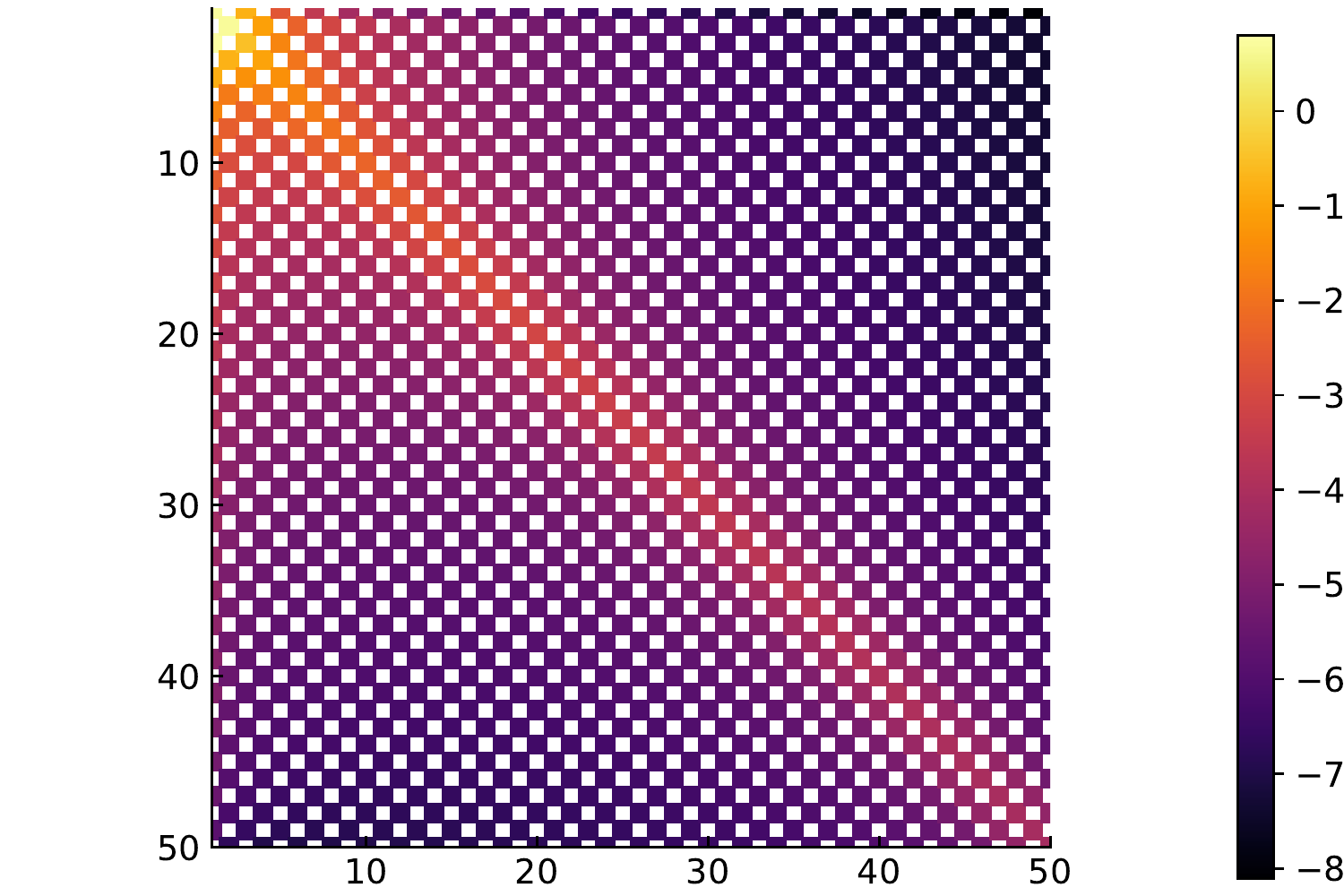} }}
         \subfloat[$\lambda = \ceil*{\frac{\alpha}{2}}-\frac{\alpha}{2}$]
    {{ \centering \includegraphics[width=4.2cm]{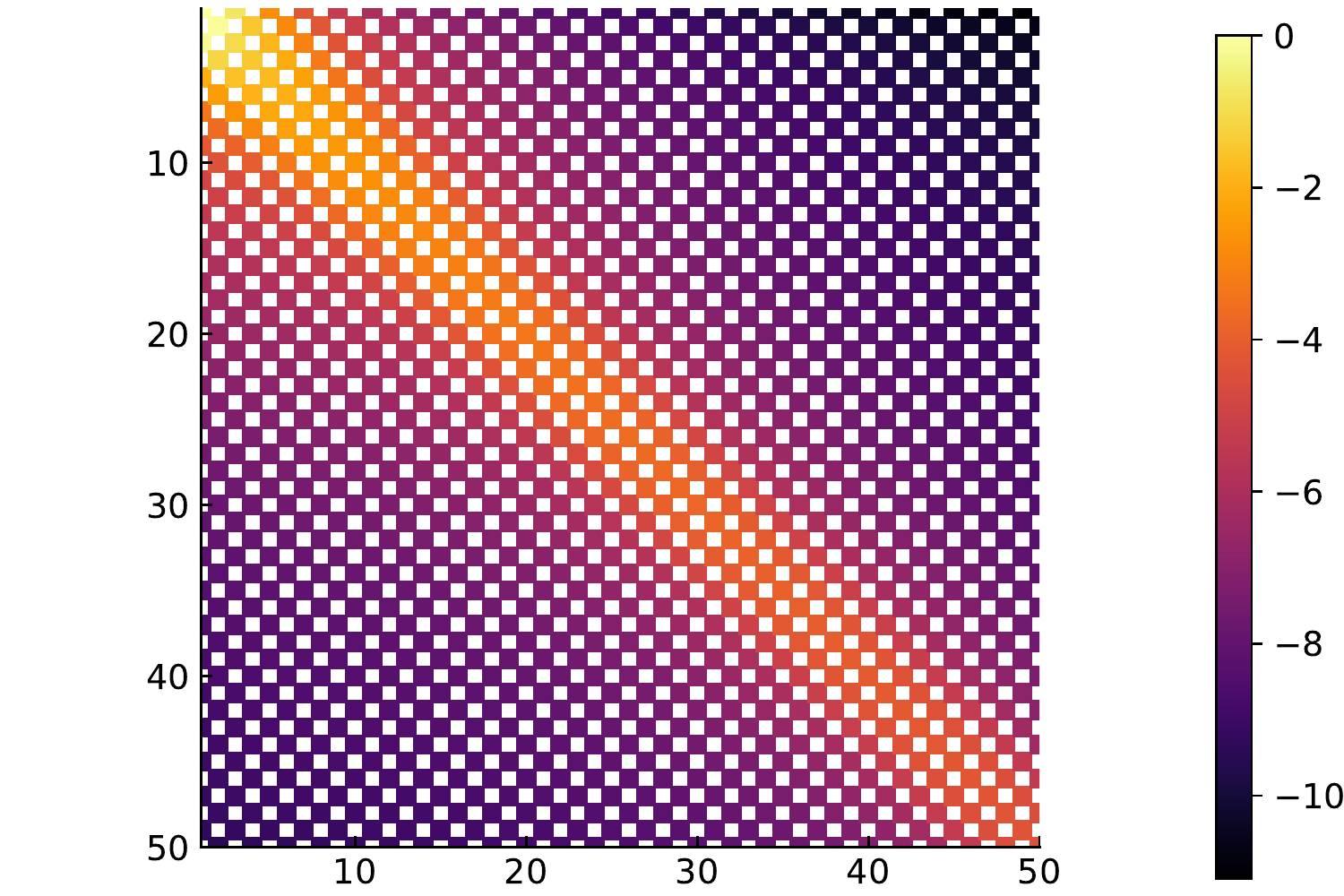} }}
             \subfloat[$\lambda = \floor*{\frac{\alpha}{2}}+\tfrac{2-\alpha}{2}$]
    {{ \centering \includegraphics[width=4.2cm]{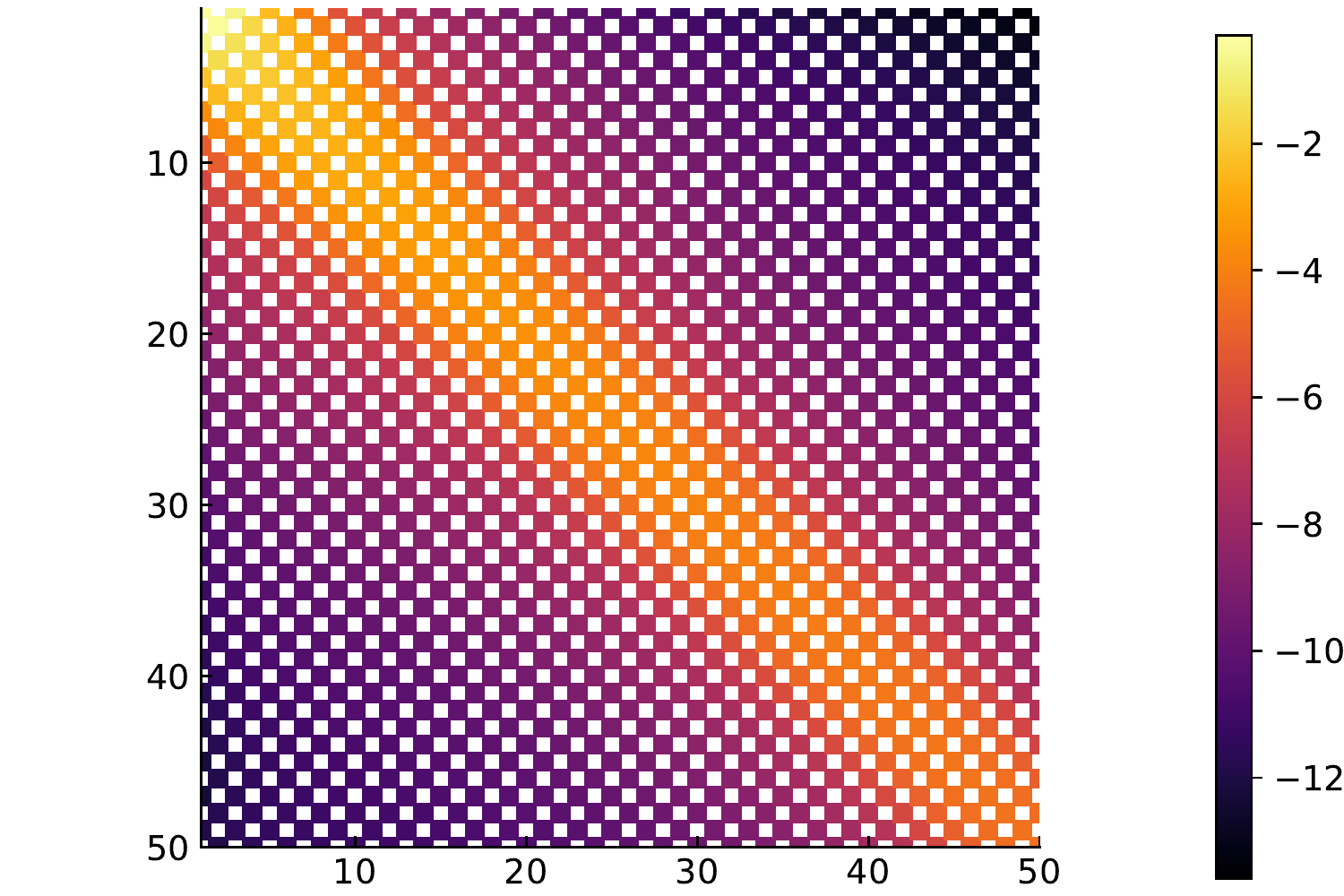} }}
    \caption{Spy plots of $\beta$-operators for the example pair $(\alpha, \beta) = (2.5, 1.5)$ in different $\alpha$-banding bases. Note that as the $\alpha$-banding $\lambda$ increases in value the bandwidth of the $\alpha$ operator grows, so that there is a trade-off in the quality of the banded approximation of the $\beta$-operator.  The legend is logarithmic, as in Figure \ref{fig:spy_approxbanded}.}%
    \label{fig:spy_approxbanded_tradeoff}%
\end{figure}
\section{Description of the method}\label{sec:method}
In this section we describe the combination of spectral and optimization methods which allows us to solve equilibrium measure problems with power law kernels. The operators are generated using the recurrence relationship we proved above and have sparsity or approximate sparsity structure as described in the previous section. We address two problems which require different solution approaches: The first kind of problem we discuss involves a single interaction type and an external potential. The second and primary problem of interest in this paper is the case of attractive-repulsive interaction terms with vanishing external potential.
Throughout all of this section describing numerical methods for different types of equilibrium problems with power law kernels we will be looking for a positive measure $\mathrm{d} \rho = \rho (x) \mathrm{d} x$ minimizing a certain energy expression $E$ with given mass $M$.
\subsection{Interactions with external potentials}\label{sec:methodpotential}
In this section we describe a method to find positive measure $\mathrm{d} \rho = \rho (x) \mathrm{d} x$ along with its assumed single interval support $\mathrm{supp}(\rho)=[a,b]$ which minimizes the energy expression
\begin{equation*}
E = \frac{1}{\alpha} \int_a^b |x-y|^\alpha \rho (y) \mathrm{d} y + V(x),
\end{equation*}
where $V(x)$ describes a sufficiently smooth external potential. To treat this problem with ultraspherical polynomials we first transform the variabes
\begin{align}\label{eq:radiustransformations1}
x &= \tfrac{b+a}{2}+\tfrac{b-a}{2}s\\
y &= \tfrac{b+a}{2}+\tfrac{b-a}{2}t \label{eq:radiustransformations2}
\end{align}
in order to move to the integration to the interval $(-1,1)$, yielding
\begin{equation*}
E = \frac{1}{\alpha} \left(\frac{b-a}{2}\right)^{\alpha+1} \int_{-1}^1 |s-t|^\alpha \tilde{\rho}(t) \mathrm{d} t + \tilde{V}(s),
\end{equation*}
where we have introduced the shorthands 
\begin{align*}
\tilde{\rho}(t) &= \rho\left(\tfrac{b+a}{2}+\tfrac{b-a}{2}t\right),\\ 
\tilde{V}(s) &= V\left(\tfrac{b+a}{2}+\tfrac{b-a}{2}s\right).
\end{align*}
We can rewrite this as the linear problem
\begin{align*}
E-\tilde{V} = \tfrac{1}{\alpha} \left(\tfrac{b-a}{2}\right)^{\alpha+1} Q^\alpha \tilde{\varrho},
\end{align*}
where the measure is assumed to be expanded in the weighted $\mathbf{C}^{(\lambda_\alpha)}$ basis, i.e.:
\begin{equation*}
\tilde{\rho}(t) = (1-t^2)^{\lambda_\alpha-\frac{1}{2}} \mathbf{C}^{(\lambda_\alpha)}(t){}^{\mathsf{T}}  \tilde{\varrho},
\end{equation*}
and the $\lambda_\alpha$ parameter is selected on the basis of the discussion and results in Section \ref{sec:bandedpowers} as a function of $\alpha$, yielding banded operators. As both $E$ and $\tilde{\varrho}$ are unknown for a given pair $(a,b)$, we differentiate once to obtain
\begin{align*}
-D\tilde{V} = \tfrac{1}{\alpha} \left(\tfrac{b-a}{2}\right)^{\alpha+1} D Q^\alpha \tilde{\varrho},
\end{align*}
which may be used to determine $\tilde{\varrho}$ up to the constant term (in the $n=0$ coefficient) for which information may have been lost in the differentiation. Via the known property of ultraspherical polynomials in \eqref{eq:n0integrationcondition} and the mass condition
\begin{align*}
\frac{b-a}{2} \int_{-1}^1 \tilde{\rho} (t) \mathrm{d} t = M.
\end{align*}
we can fix this $n=0$ term which allows us to obtain a measure given the support interval $[a,b]$. Using this measure computing its energy is a simple matter of using the already generated operators and evaluating at a point in the support, where the energy is constant by construction. As we can compute a unique corresponding measure for a given interval support $[a,b]$ and straightforwardly compute its corresponding energy we can thus reduce the na\"\i vely required intractable optimization over a measure space to an optimization over the interval boundary variables $a$ and $b$. Furthermore, all of the involved operators for the problem described in this section are exactly banded, resulting in high computational efficiency. For symmetric external potentials the problem can be reduced to a single variable optimization over the radius $R$ of the support $[a,b]=[-R,R]$.\\
The linear system we end up with using the above direct approach to the solution of the equilibrium measure problem is a Fredholm integral equation of first kind. These problems are generally ill-posed with numerical approximations of the inverse operator resulting in blow-up as the order $n$ increases, leading to stability concerns. 
To counteract this, one approach is to use Tikhonov regularization \cite{neggal_projected_2016,colton_inverse_2013,nair_linear_2009,tikhonov1963regularization,tikhonov1963solution,phillips1962technique} to instead solve an adjacent Fredholm integral equation of second kind with far better stability properties whose solution converges to the desired one in some well-defined sense -- we address this in Section \ref{sec:analysis}. Numerical comparisons of stability and convergence are presented in Section \ref{sec:numericalexamples}.
\subsection{Attractive-repulsive interactions with single interval support}
In this section we aim to find positive measure $\mathrm{d} \rho = \rho (x) \mathrm{d} x$ along with its assumed single interval support $\mathrm{supp}(\rho)=[a,b]$ which minimizes the energy expression
\begin{equation*}
E = \frac{1}{\alpha} \int_a^b |x-y|^\alpha \rho (y) \mathrm{d} y - \frac{1}{\beta} \int_a^b |x-y|^\beta \rho(y) \mathrm{d} y.
\end{equation*}
While in general an external potential may be added in similar fashion to Section \ref{sec:methodpotential} we omit discussion of this here as it leads to a similar approach as seen in the previous section and having no external potential presents unique challenges in itself. We will also assume the support to be a single interval $[a,b]$, a restriction we will dispose of in the following section in which we will address two interval methods. As before we begin by transforming the above energy expression to the interval $(-1,1)$ via (\ref{eq:radiustransformations1}--\ref{eq:radiustransformations2}) yielding:
\begin{equation*}
E = \tfrac{1}{\alpha} \left(\tfrac{b-a}{2}\right)^{\alpha+1} \int_{-1}^1 |s-t|^\alpha \tilde{\rho}(t) \mathrm{d} t - \tfrac{1}{\beta} \left(\tfrac{b-a}{2}\right)^{\beta+1} \int_{-1}^1  |s-t|^\beta \tilde{\rho}(t) \mathrm{d} t,
\end{equation*}
where we once again use the shorthand $\tilde{\rho}(t) = \rho(\tfrac{b+a}{2}+\tfrac{b-a}{2}t)$. Dividing through the constant energy expression and rewriting it from the point of view of operators acting on ultraspherical polynomial basis expansions with coefficients $\tilde{\varrho}$ we obtain
\begin{align*}
1 = \tfrac{1}{\alpha} \left(\tfrac{b-a}{2}\right)^{\alpha+1} Q^\alpha{} \tfrac{\tilde{\varrho}}{E} - \tfrac{1}{\beta} \left(\tfrac{b-a}{2}\right)^{\beta+1} Q^\beta{} \tfrac{\tilde{\varrho}}{E}.
\end{align*}
The implicit weight parameter $\lambda$ here is suitably chosen in accordance with the results in section \ref{sec:recurrence} to yield a banded $Q$ operator for one of the parameters $\alpha$ or $\beta$ and an approximate banded operator for the second parameter, placing them in the same $\lambda$-basis. The above expression allows us to compute an ultraspherical approximation of the expression $\frac{\tilde{\varrho}}{E}$ for a given pair $(a,b)$ which as for the simpler potential case above may then be used to find the desired equilibrium measure $\rho(x)$ via the mass condition in \eqref{eq:masscondition}. No differentiation is required in the absence of external potentials. Using the thus obtained $\rho(x)$ it is a straightforward application of the already computed operators to evaluate for the energy $E$ and therefore, as before, this approach reduces what na\"\i vely would appear as an optimization problem over a difficult to work with measure space to an easy to work with minimization with respect to only two variables---the support interval parameters $a$ and $b$. In fact, as this problem is translation symmetric in the absence of an external potential $V(x)$, this particular problem is reducible to an optimization over a single variable, as either the left or right boundary may be fixed arbitrarily, e.g. by setting $a=-1$.\\
This direct approach also ends in having to solve an ill-posed Fredholm integral equation of first kind with compact operator and thus potentially unstable numerical inversion. Regularization of this problem is addressed in Section \ref{sec:analysis}.

\section{Approach for measures with two interval support}\label{sec:twointerval}
\subsection{Two interval support recurrence relationship}
So far we assumed at multiple points that the equilibrium measure has single interval $[a,b]$. As we will see in the numerical experiments section and was discussed analytically in \cite{carrillo_explicit_2016}, this assumption is no longer true in certain high parameter ranges, as the obtained measures become negative around the origin and are thus no longer admissible. In this section we discuss how this assumption may be dropped to obtain a generalized approach for measure supports consisting of multiple disjoint intervals. For the sake of brevity, we will not discuss adding external potentials to the multiple interval case and furthermore specifically focus on the case of the support consisting of two disjoint intervals as opposed to an arbitrary number. We leave it as a remark that extensions including potentials and more than two intervals are possible for well-behaved external potentials by lifting the assumption of rotational symmetry while following a similar chain of arguments in a piecewise polynomial space. The two interval case is particularly of interest for the discussion of gap formation in high parameter ranges mentioned above.\\
We begin by revisiting what form the problem takes given the assumption of a two interval support. As no external potential will be present, the solution will exhibit translational invariance on $\mathbb{R}$, so we can without loss of generality assume that the two intervals will be centered around the origin, i.e. we let $$\mathrm{supp}(\rho) = [-b,-a] \cup [a,b].$$ As we are for now primarily interested in the structure and recurrence relations of the operator and adding a second operator is straightforward, we only explore the following single power terms instead of the full governing equation:
\begin{align*}
\int_{-b}^{-a} |x-y|^\alpha \rho_l(y) \mathrm{d}y + \int_{a}^{b} |x-y|^\alpha \rho_r(y) \mathrm{d}y = E(x),
\end{align*}
where $x \in [-b,-a] \cup [a,b]$ and
\begin{align*}
\rho(y) = \begin{cases} \rho_l(y), \qquad \text{if} \quad y \in [-b,-a]\\
\rho_r(y),\qquad \text{if} \quad y \in [a,b].
\end{cases}
\end{align*}
By rotational symmetry we have $\rho_l(-y) = \rho_r(y)$ which we can use to rewrite the integral equation with the substitution $y \rightarrow -y$:
\begin{align}\label{eq:symmetricmultiint}
\int_{a}^{b} |x+y|^\alpha \rho_r(y) \mathrm{d}y + \int_{a}^{b} |x-y|^\alpha \rho_r(y) \mathrm{d}y = E(x),
\end{align}
We are only interested in $E(x)$ where $x \in [-b,-a] \cup [a,b]$, in other words we are interested in $E([-b,-a])$ and $E([a,b])$. As can be easily verified from  \eqref{eq:symmetricmultiint}, the rotational symmetry of the problem leads to a symmetry in $x$ as well, namely the problem is completely invariant under the transformation $x\rightarrow -x$. As a consequence, no information is gained in the rotationally symmetric case from considering both $E([-b,-a])$ and $E([a,b])$, and we may instead freely choose to only consider $E([a,b])$ and thus only values $x\in[a,b]$ to fully describe the problem. A notable difference to the one interval case is the appearance of the kernel $|x+y|^\alpha = |y-(-x)|^\alpha$ in one of the terms, which is equivalent to evaluating the familiar power law kernel $|y-x|^\alpha$ for $x\in [-b,-a]$. An intuition for this term stemming from the interpretation of interacting particles is that this term computes the long range influence of the particles in $[-b,-a]$ acting on those in $[a,b]$.\\
We already know how to generate the operator in the second term in  \eqref{eq:symmetricmultiint} after a transformation of variables
\begin{align*}
y &= \left(\tfrac{b+a}{2}\right) + \left(\tfrac{b-a}{2}\right)t,\\
x &= \left(\tfrac{b+a}{2}\right) + \left(\tfrac{b-a}{2}\right)s,
\end{align*}
to the interval $[-1,1]$. This leads to
\begin{align}\label{eq:symmetricmultiint2}
\left(\tfrac{b-a}{2}\right)^{\alpha+1}\left(\int_{-1}^{1} \left\lvert-\left(\tfrac{2(b+a)}{b-a}+s\right)-t\right\rvert ^\alpha \rho_r(t) \mathrm{d}t + \int_{-1}^{1} |s-t|^\alpha \rho_r(t) \mathrm{d}y \right) = E(s),
\end{align}
with $s\in[-1,1]$. In this form we see that the second operator may be generated exactly as before using Theorem \ref{theorem:twotermrecFullop}. Furthermore, as it makes no assumptions on $x$, we see that Theorem \ref{theorem:twotermrecFullop} can also be used to generate the first operator if the $n=0$ and $n=1$ cases are known, by replacing all occurrences of $x$ with $\frac{2(b+a)}{a-b}-s$. 
\subsection{The $n=0$ and $n=1$ cases for multiple intervals}
The previous section covered how to generate the operators assuming the $n=0$ and $n=1$ cases are available. When $s \in [-1,1]$ we have $\frac{2(b+a)}{a-b}-s \in \left[\frac{3b+a}{a-b},\frac{3a+b}{a-b}\right]$, so the analytic solutions in Section \ref{sec:structure} cannot be used. Instead, we require the following solutions with $w(y)=(1-y^2)^{\lambda-\frac{1}{2}}$ which extend those results to the real line:
\begin{align*}
&\int_{-1}^{1} |x-y|^\alpha w(y) \mathrm{d}y =\tfrac{\sqrt{\pi}\Gamma(\lambda+\frac{1}{2})|x|^{\alpha}{}_2 F_1\left(\frac{1-\alpha}{2},-\frac{\alpha}{2};1+\lambda;\frac{1}{x^2}\right)}{\Gamma(1+\lambda)}, \text{ if } |x|>1.\\
&\int_{-1}^{1} |x-y|^\alpha w(y) C_1^{(\lambda)}(x) \mathrm{d}y =\begin{cases}  \tfrac{\sqrt{\pi}\alpha \lambda \Gamma(\lambda+\frac{1}{2})(-x)^{\alpha-1}{}_2 F_1\left(\frac{1-\alpha}{2},1-\frac{\alpha}{2};2+\lambda;\frac{1}{x^2}\right)}{\Gamma(2+\lambda)}, \text{if } x<-1,\\
 \tfrac{-\sqrt{\pi}\alpha \lambda \Gamma(\lambda+\frac{1}{2})x^{\alpha-1}{}_2 F_1\left(\frac{1-\alpha}{2},1-\frac{\alpha}{2};2+\lambda;\frac{1}{x^2}\right)}{\Gamma(2+\lambda)}, \text{ if } x>1.
\end{cases}
\end{align*}
We recall that the Gauss hypergeometric function $_2 F_1$ is a finite polynomial in its last argument when its first or second parameter is a negative integer. For the above $n=0$ and $n=1$ cases this corresponds to $\alpha \in \mathbb{N}_0$, meaning that generally the appearing hypergeometric functions will not be finite polynomials and must thus instead be approximated. As it is often difficult to divine the properties of a particular hypergeometric function from its standard form, what we seek is a systematic representation of the $n=0$ and $n=1$ solutions in terms of more familiar functions whose properties with varying parameters are more readily understood. The characteristic feature of the hypergeometric functions appearing in the $n=0$ and $n=1$ solution is that their first and second parameter differ by exactly $\frac{1}{2}$. Such hypergeometric functions have long-standing known connections to the theory of associated Legendre functions, going back to \cite{gormley_generalization_1934}, see also \cite[3.6.2]{bateman_higher_1981}. Explicitly we have \cite[07.23.03.0111.01;07.23.03.0110.01]{wolframfunctions_2020}:
\begin{align*}
\, _2F_1\left(\gamma,\gamma+\tfrac{1}{2};c;z\right)=\tfrac{2^{c-\frac{1}{2}} e^{i \pi  \left(-2 \gamma+c-\frac{1}{2}\right)} z^{\frac{1}{4} (1-2 c)} \Gamma (c) (1-z)^{\frac{1}{4} (2 c-1)-\gamma} Q_{c-\frac{3}{2}}^{2 \gamma-c+\frac{1}{2}}\left(\frac{1}{\sqrt{z}}\right)}{\sqrt{\pi } \Gamma (2 \gamma)},
\end{align*}
with $z=\frac{1}{x^2}\notin(1,\infty)$ and
\begin{align*}
\, _2F_1\left(\gamma,\gamma+\tfrac{1}{2};c;z\right)=2^{c-1} z^{\frac{1-c}{2}} \Gamma (c) (1-z)^{\frac{c-1}{2}-\gamma} P_{2 \gamma-c}^{1-c}\left(\tfrac{1}{\sqrt{1-z}}\right),
\end{align*}
where $z=\frac{1}{x^2}\notin(-\infty,0)$, where we remind ourselves that ${}_2F_1$ is symmetric with respect to exchange of the first two parameters. In the above, $Q^\mu_\nu(x)$ denotes the associated Legendre function of second kind and $P^\mu_\nu(x)$ the associated Legendre functions of first kind. The properties of associated Legendre functions are well understood, see e.g. \cite[14.1--14.20]{nist_2018} and \cite{bateman_higher_1981} and software packages to compute them to arbitrary numerical precision are widely available. In practice, our implementation of the described method computes polynomial approximations of the hypergeometric function implementation in the HypergeometricFunctions.jl package \cite{noauthor_juliamath/hypergeometricfunctions.jl_2020}, which uses the fast and reliable algorithms described in \cite{michel_fast_2008,pearson_numerical_2017}.
\section{Convergence and stability of the method} \label{sec:analysis}
Two aspects must be considered in the discussion of convergence for this method. First, the convergence of the ultraspherical spectral method itself, which is used to find the measure for a given interval of support $[a,b]$. Second, the optimization used to determine the energy minimum and thus the boundaries of said support, where convergence properties are inherited from the optimization method of choice---in our case primarily a Newton method with Hager-Zhang type linesearch \cite{hager_algorithm_2006} as implemented in the Optim.jl package \cite{mogensen_optim_2018}. As any optimization method with sufficiently good convergence properties may be used and the energy functions are well-behaved in the neighbourhood of the minimizers, the focus of this section is placed on the spectral method aspect. This part consists of finding solutions to
\begin{equation}\label{eq:solveanalytic}
E = \frac{1}{\alpha} \int_a^b |x-y|^\alpha \rho (y) \mathrm{d} y - \frac{1}{\beta} \int_a^b |x-y|^\beta \rho(y) \mathrm{d} y,
\end{equation}
for given data tuple $(a,b,\alpha,\beta)$. We will assume throughout this section that we are intending to solve this problem in a region where unique solutions exist.\\
For the case of vanishing external potential as in \eqref{eq:solveanalytic} and with $\alpha, \beta > 0$, which are the primary cases of interest to our applications, the problem to solve has the form of a Fredholm intergral equation of first kind with convolution kernel $K(x,y) \in L^2(a,b)$. Generally speaking, Fredholm integral equations of first kind are ill-posed problems, as on the typically considered Banach spaces such operators are Hilbert-Schmidt and compact and thus not invertible and attempts to compute numerical approximations to their inverse become unbounded. Even in scenarios where the thus obtained solutions may be somewhat sensible (as is the case for the method presented in this paper, established by numerical experimentation) there is a high risk of instability with respect to the order of approximation as well as potential noise in the initial data, c.f. discussions in \cite{neggal_projected_2016}. As the initial data for this problem include the boundary of the support which is obtained via an optimization method this deserves attention.\\
Given some well-behavedness assumptions, however, integral equations of first kind may be transformed into a well-posed problem. There are two primary ways to accomplish this: Modifications to the considered Banach spaces or modifications to the ill-posed operator. The first option, which we will not pursue here, typically considers the problem on weighted Banach spaces instead. For a space with appropriate weight, the operator may no longer be compact and thus an inverse may exist. Such approaches largely rely on being able to tightly characterize the compactness of the specific operators, an example of using such an approach for convergence proofs of sparse spectral methods for \emph{Volterra} integral equations of first kind may be found in \cite{gutleb_sparse_2019}.\\
A second common approach to transforming ill-posed first kind integral equations into well-posed ones, largely going back to Tikhonov and Philips \cite{tikhonov1963regularization,tikhonov1963solution,phillips1962technique}, is to use regularization methods to solve well-posed second kind equations which are adjacent to the ill-posed problem of interest and then consider an appropriate limit, see e.g. discussions in \cite{neggal_projected_2016,nair_linear_2009,colton_inverse_2013} and the references therein. The Tikhonov regularization approach, which we do not detail here, transforms our problem in \eqref{eq:solveanalytic} into the second kind Fredholm integral equation
\begin{equation}\label{eq:solveanalytic2}
(s\mathcal{I} + \mathcal{F}^*\mathcal{F}) \rho_s = \mathcal{F}^*E,
\end{equation}
with shorthand notation $\mathcal{F} = Q^\alpha_\lambda - Q^\beta_\lambda$. This Tikhonov regularization of the original problem is then solved via a finite-dimensional approximation via the proposed spectral method, resulting in a spectral \emph{projected} Tikhonov regularization method \cite{neggal_projected_2016}. In the numerical experiments section we present a comparison of the na\"\i ve inversion and the projected Tikhonov regularization approach, where the stability advantages of the latter become clear. The stability of the Tikhonov regularization method with respect to perturbations in the support boundary data $(a,b)$ are likewise explored in the numerical experiments. This approach raises the crucial question of how the error of these adjacent solutions depend on the parameter $s$ and what an appropriate choice for $s$ is in the context of such a numerical scheme. For numerical purposes the trade-off between more stability for large $s$ and higher accuracy for small $s$ has received much attention, see references listed above, and will thus not be repeated here. For the purpose of the method introduced in this paper, the error incurred by using the $s$-regularized $n$-th order approximation denoted $\rho_{s,n}$ can be split up into an error due to the Tikhonov regularization $\|\rho_{s} - \rho  \|$ and an error due to the spectral method $\| \rho_{s,n} - \rho_{s}\|$ using the triangle inequality
\begin{align*}
\|\rho_{s,n} - \rho\| =  \| \rho_{s,n} - \rho_{s} + \rho_{s} - \rho  \| \leq \| \rho_{s,n} - \rho_{s}\|+  \|\rho_{s} - \rho  \|,
\end{align*}
The Tikhonov regularization provides the stability for what would otherwise be an ill-posed problem while the use of spectral methods keeps the convergence rate comparatively high with other numerical approaches.
\section{Numerical experiments and verification} \label{sec:numericalexamples}
In this section we present numerical experiments using the methods introduced above for two primary purposes: The first set explores the advantages of using a Tikhonov regularization of the problem instead of a direct linear solve approach. The remaining sections compare numerical solutions obtained via the spectral method to analytically known solutions as well as solutions computed through alternative numerical means where possible. The method is found to be highly efficient and results in reliably small errors even near boundary singularities.\\
The figures in these sections show differences and errors as absolute values for both absolute and relative errors. With the exception of the first set of experiments, where we explicitly compare the Tikhonov regularization to a direct approach, the Tikhonov regularized method is used throughout this section.
\subsection{Stability and error comparison for Tikhonov regularization}
We have discussed two options to use the introduced spectral method to find a measure associated with a given compact support -- a direct approach and one using Tikhonov regularization. In this section we demonstrate the superiority of the Tikhonov regularization approach in terms of errors and stability as $n$ increases for examples where the analytic measures are known.\\
We compare stability of the obtained measures as the order of approximation $n$ increases for the following problem with analytically known solutions: Let $M=1$, $(\alpha_1,\beta_1 ) = (\frac{7}{3},2)$ and $(\alpha_2,\beta_2) = (2,-\frac{1}{5})$. We seek minimizers to the following energy expression
\begin{equation*}
E = \frac{1}{\alpha_i} \int_a^b |x-y|^{\alpha_i}\rho (y) \mathrm{d} y - \frac{1}{\beta_i} \int_a^b |x-y|^{\beta_i} \rho(y) \mathrm{d} y,
\end{equation*}
with $i \in {1,2}$. For $(\alpha_1,\beta_1)$ this has known solutions \cite{carrillo_explicit_2016,carrillo_radial_2021}:
\begin{align}\label{eq:analyticsolutionset11}
&b = -a = \left[ - \tfrac{\cos\left(\frac{\alpha \pi}{2}\right)}{\pi (\alpha-1)} B\left(\tfrac{1}{2},\tfrac{3-\alpha}{2}\right) \right]^{\frac{1}{\alpha-2}},\\\label{eq:analyticsolutionset12}
&\rho (x) = -\tfrac{M}{\alpha-1} \tfrac{\cos\left(\frac{\alpha \pi}{2} \right)}{\pi} \left(b^2-x^2 \right)^{\frac{1-\alpha}{2}},
\end{align}
where $B(\cdot,\cdot)$ denotes the Beta function. For $(\alpha_2,\beta_2)$ this has known analytic solutions \cite{carrillo_explicit_2016,lopes_uniqueness_2019,carrillo_radial_2021}:
\begin{align}\label{eq:analyticsolutionset21}
&b = -a = \left[ \tfrac{\cos \left( \frac{(2-\beta)\pi}{2}\right)}{(\beta-1)\pi} B\left( \tfrac{1}{2},\tfrac{3-\beta}{2} \right) \right]^{\frac{1}{\beta-2}},\\\label{eq:analyticsolutionset22}
&\rho (x) = -\tfrac{M}{\alpha-1} \tfrac{\cos\left(\frac{\alpha \pi}{2} \right)}{\pi} \left(b^2-x^2 \right)^{\frac{1-\alpha}{2}}.
\end{align}
Figure \ref{fig:Tikhonovconverge} shows the behaviour of the absolute error as the order of approximation $n$ increases when the measure is computed with the analytically known support, demonstrating the improved stability of the Tikhonov regularization. Figure \ref{fig:TikhonovconvergePerturb} shows the behaviour of the absolute error at $n=50$ with respect to perturbations in the boundary data $(a,b)$ supplied for the support of the measure---we find that in general, for both the direct and the Tikhonov approach, perturbations in the support boundary linearly correlate with errors on the measure. This information is vital for the choice of optimization convergence requirements in the following sections but no optimizations were used for the results presented in this section.\\
Even for the stable Tikhonov regularized problem, increasing the order of approximation does not yield a significant improvement in the obtained error here, as the method we propose converges to a good approximation even for very low $n$ if the appropriate weighted basis is chosen. The exceptional convergence in this section, however, is largely owed to the special properties of the solution in the cases with analytically known solutions, as their boundary singularities are exactly resolved in our basis choice. In Section \ref{sec:convergencebandwidth} we will discuss convergence of the solution coefficients for more general examples where the singularities are by necessity approximated. As for a stable spectral method solution the error will remain approximately constant once convergence has been reached and certainly should not \emph{increase}, criteria such as these could be used to produce an automatically converging algorithm which also automatically determines a sensible choice of Tikhonov parameter $s$ for which such convergent behaviour is observed.
\subsection{Comparisons with analytically derived steady state solutions}
As mentioned above, solutions for certain parameter ranges of $\alpha$ and $\beta$ for the attractive-repulsive power law kernel problem, where one of the parameters is an even integer, were derived in \cite{carrillo_explicit_2016}. In this section we investigate the error for numerical solutions obtained via the introduced spectral method in combination with optimization without using any a priori knowledge about the radius of the support.\\
In Figure \ref{fig:set1-1_errors} we plot the numerically obtained equilibrium measure and present absolute and relative errors in the measure normalized to the interval $(-1,1)$ determined with the above-discussed numerical method for the example case 
\begin{align*}
(\alpha_1,\quad \beta_1,\quad M_1) = \left(2,\quad \tfrac{3}{2},\quad 1\right).
\end{align*}
This allows comparing errors in the measure and errors in the radius somewhat separately. The analytic solution is the one seen in (\ref{eq:analyticsolutionset21}-\ref{eq:analyticsolutionset22}) above. A simple Newton method with Hager-Zhang linesearch found support radius with absolute error of order $10^{-12}$ compared to the analytically expected radius in (\ref{eq:analyticsolutionset22}).\\
In Figure \ref{fig:set1-2_errors} we plot the numerically obtained equilibrium measure and present absolute and relative errors in the measure normalized to the interval $(-1,1)$ for the example case
\begin{align*}
(\alpha_2,\quad \beta_2,\quad M_2) = \left(\tfrac{7}{3},\quad 2,\quad 3\right).
\end{align*}
The analytic solution is the one seen in (\ref{eq:analyticsolutionset11}-\ref{eq:analyticsolutionset12}) above. A univariate Newton method with Hager-Zhang linesearch found support radius with absolute error of order $10^{-13}$ compared to the analytically expected radius in (\ref{eq:analyticsolutionset12}). Note that the accuracy loss in this section compared to the previous section is explained by the fact that we now use an optimization method to compute the radius rather than using an analytically known one, compare Figure \ref{fig:TikhonovconvergePerturb}. Requiring a more precise result from the the numerical optimization increases the obtained precision. 
\subsection{Comparisons with other approaches in low parameter regimes}
We compare the above method with a more straightforward application of the recurrence relationships derived in Section \ref{sec:recurrence}, which uses a root-search algorithm over a space of approximated measures instead of optimizing over the boundary of the support. This method only works for $\alpha$ and $\beta$ for which the equilibrium measure is the unique measure which vanishes on the boundary of its support. The parameter ranges in which this less efficient method works can be inferred from result proved in \cite{carrillo_explicit_2016}. In such cases, we can use a root search algorithm method to find coefficients of a polynomial approximation with zeros on the boundary.\\
As the previous sections focused on results for attractive-repulsive problems without external potential, this section presents results for an attraction-only and a repulsion-only problem---respectively with a radially symmetric and asymmetric external potential. Naturally, a two variable support boundary based optimization method has to be used for asymmetric potentials instead of a univariate one. We use gradient free Nelder-Mead optimization for our support boundary optimization method in this section to showcase that the method of optimization used is up to the user. The alternative coefficient space method is performed using a simple Newton-type root search method which acts on as many coefficients as are chosen for the approximation but has higher computational complexity as this number is always larger than $2$ for sensible, non-trivial results. We seek minimizers to the following energy expression
\begin{equation*}
E = \frac{1}{\alpha_i} \int_a^b |x-y|^{\alpha_i} \rho (y) \mathrm{d} y + V_i(x),
\end{equation*}
for the following data:
\begin{align}\label{eq:example2-1}
\left(\alpha_1,\quad M_1,\quad V_1(x)\hspace{1mm}\right) &= \left(\tfrac{1}{2},\quad 1,\quad x^2\right),\\ \label{eq:example2-2}
\left(\alpha_2,\quad M_2,\quad  V_2(x)\hspace{1mm}\right) &= \left(-\tfrac{2}{3},\quad \tfrac{5}{3},\quad -x^4+\sin(x)\right). 
\end{align}
Figure  \ref{fig:set2numexp} shows the absolute difference between the equilibrium measures obtained using the two approaches outlined above. We also plot the asymmetric case equilibrium measure in Figure \ref{fig:set2numexp}, which demonstrates that radial symmetry in the measure, potential and support of the measure are not required for either of the two methods to work. Both approaches arrive at the same support boundary with deviations of order $10^{-13}$. As stated above, we stress that the root-search approach only works for the limited range of parameters for which $\rho(x)$ vanishes on the boundary but in said ranges provides a useful means for comparison.
\begin{figure}
     \subfloat[$(\alpha_1,\beta_1,s) = (\frac{7}{3},2, 10^{-13})$]
    {{ \centering \includegraphics[width=6.2cm]{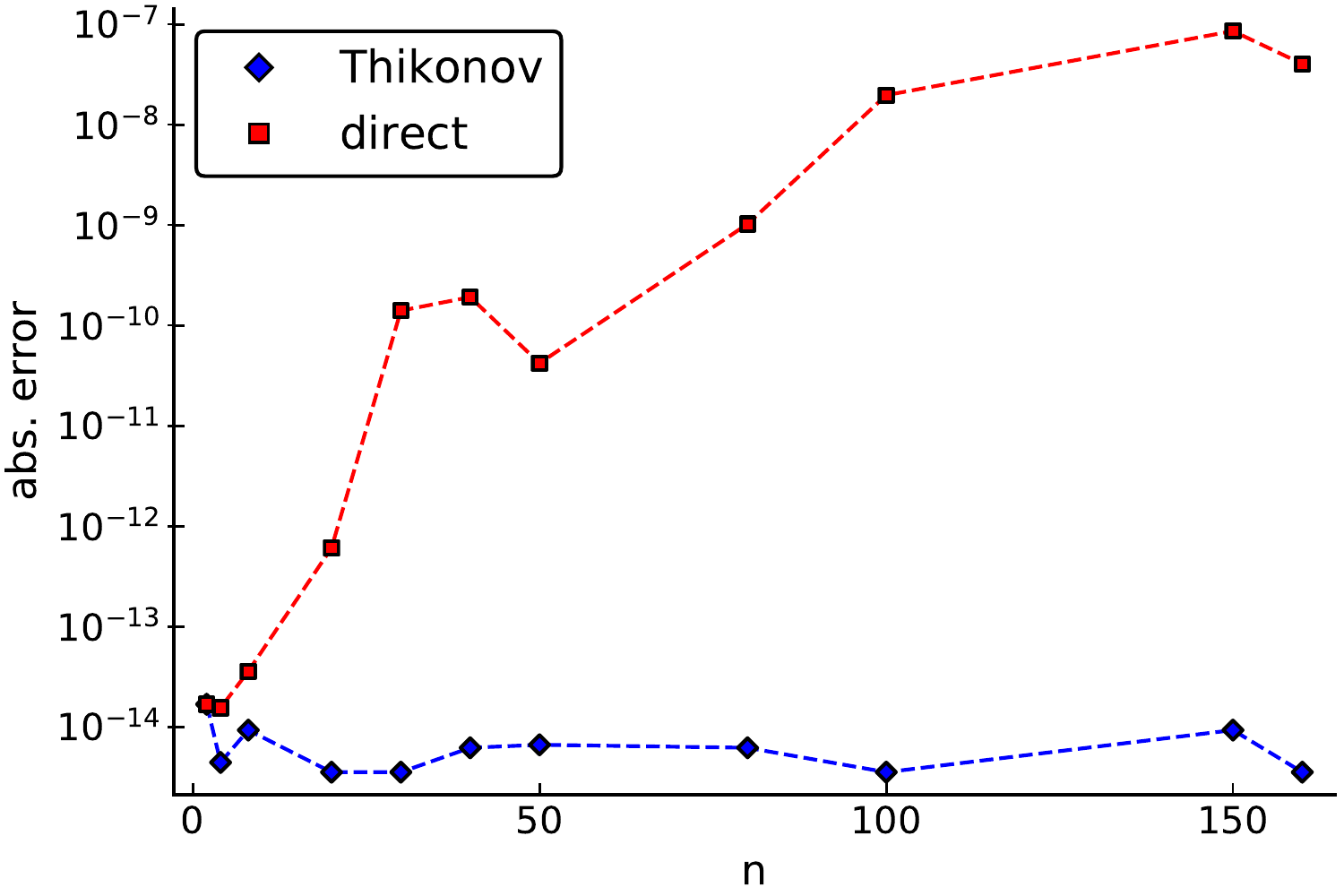} }}
         \subfloat[$(\alpha_2,\beta_2,s) = (2,-\frac{1}{5},10^{-13})$]
    {{ \centering \includegraphics[width=6.35cm]{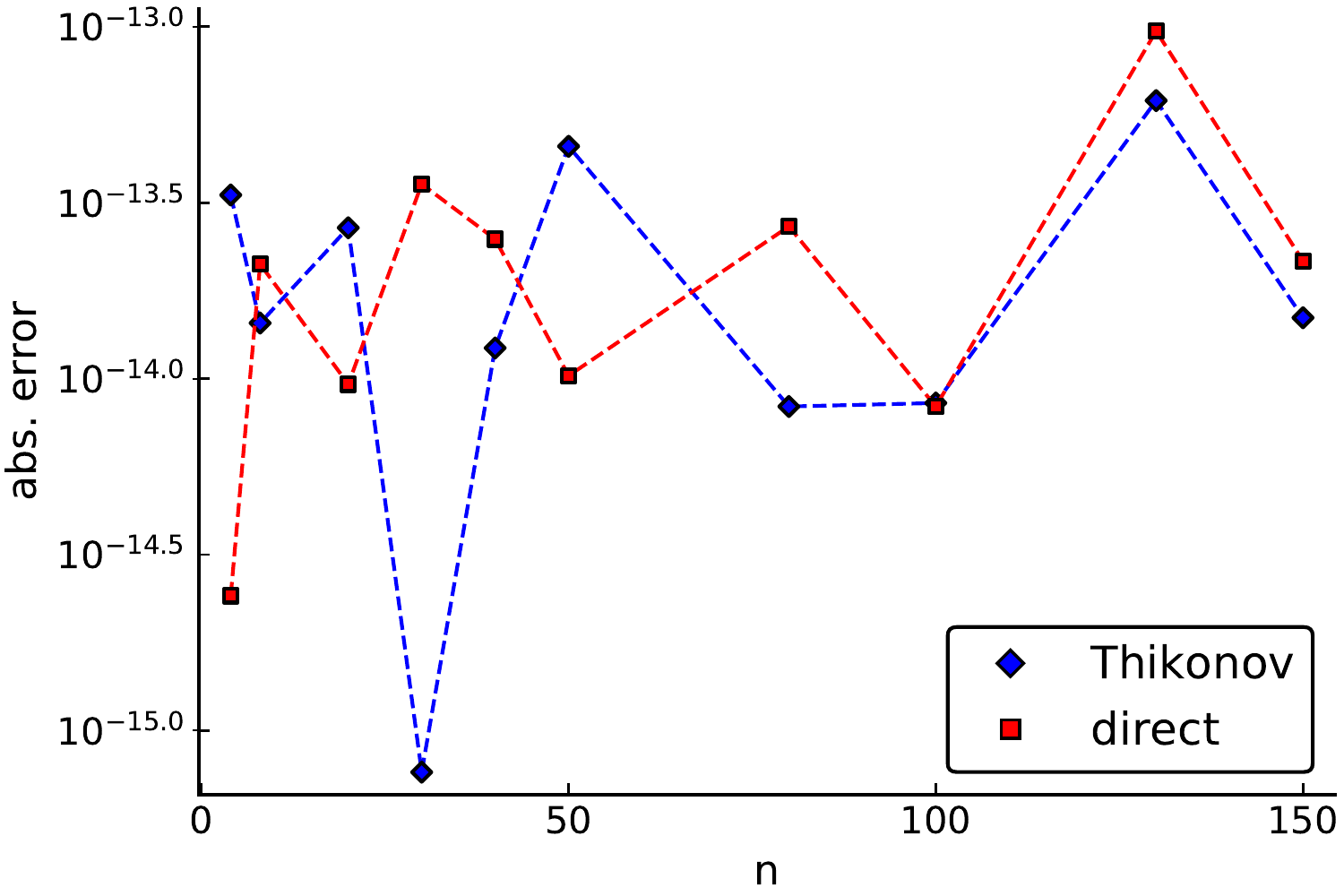} }}
    \caption{Semi-logarithmic convergence plot of absolute errors close to the boundaries compared to analytic solutions in (\ref{eq:analyticsolutionset11}-\ref{eq:analyticsolutionset22}). In (A) the direct method is unstable and grows with increasing order, while the error of the Tikhonov regularized solution shows the behaviour expected of a converged numerical solution. In (B) the original problem appears stable without regularization.}%
    \label{fig:Tikhonovconverge}%
\end{figure}
\begin{figure}
     \subfloat[$(\alpha_1,\beta_1,s) = (\frac{7}{3},2,10^{-13})$]
    {{ \centering \includegraphics[width=6.3cm]{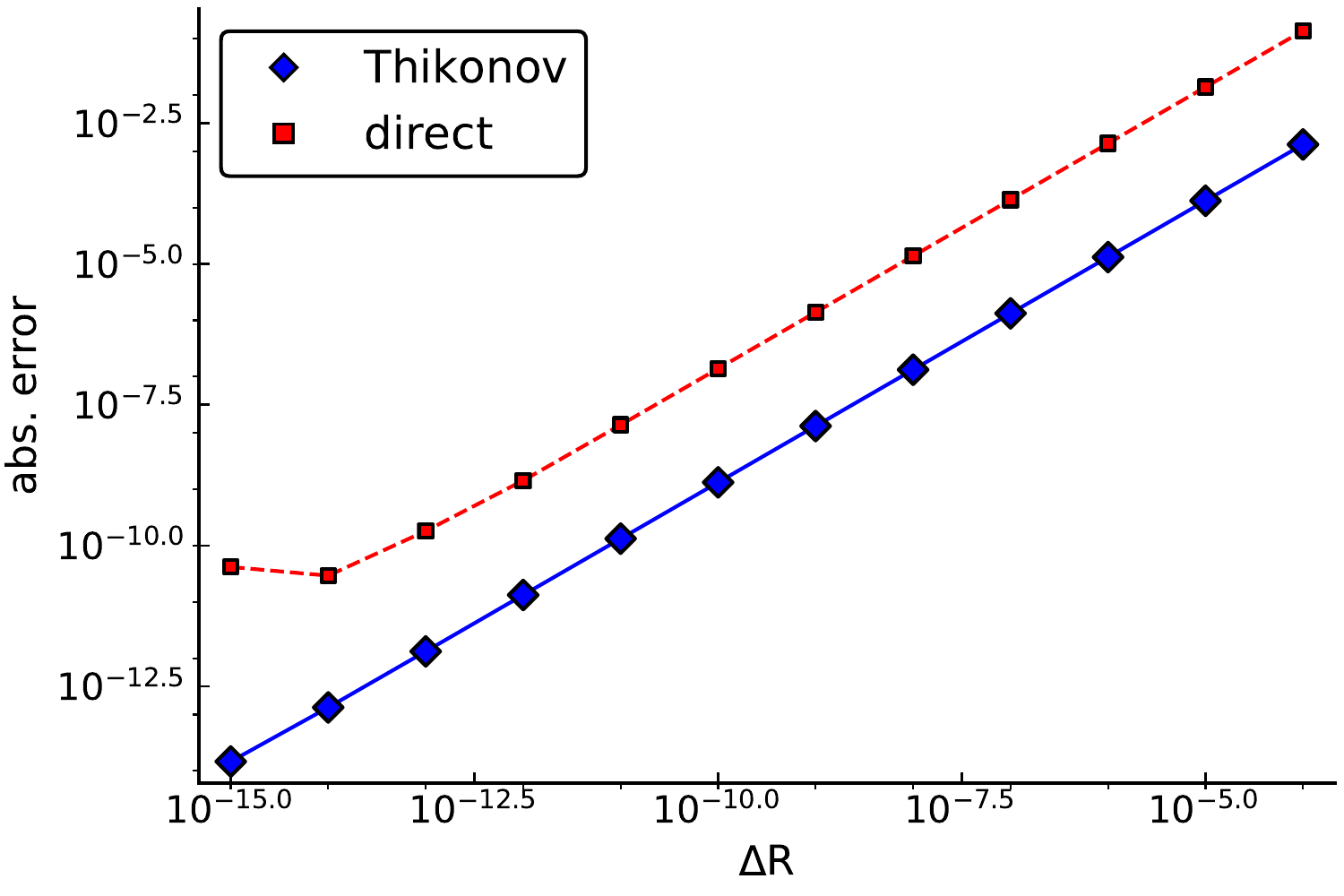} }}
         \subfloat[$(\alpha_2,\beta_2,s) = (2,-\frac{1}{5},10^{-13})$]
    {{ \centering \includegraphics[width=6.35cm]{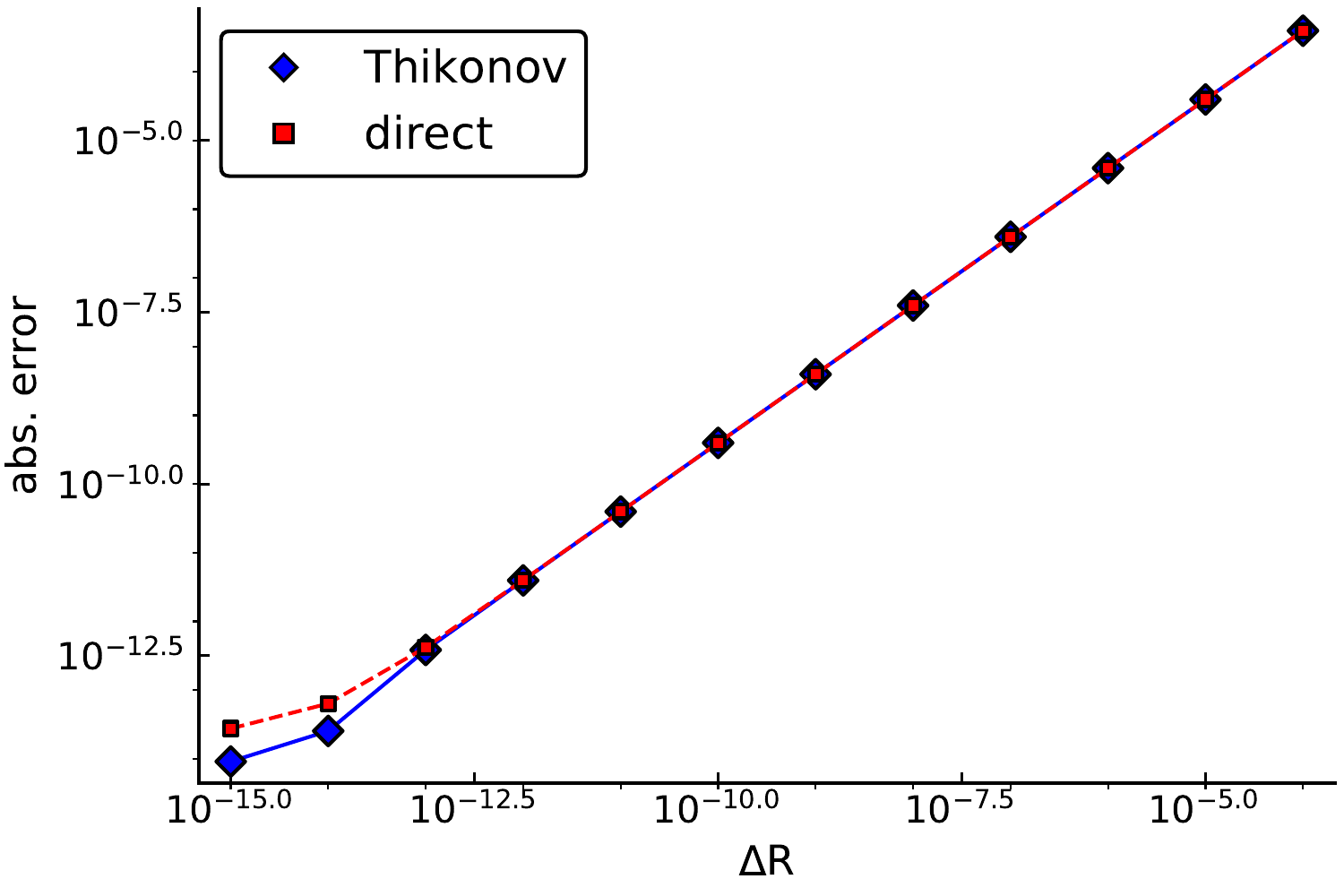} }}
    \caption{Absolute errors close to the boundaries for $n=50$ as the value of the analytic support radius $R$ is perturbed by $\Delta R$. To obtain a sensible comparison, both the analytic and numerically obtained measure were shifted to $(-1,1)$ before comparing.}%
    \label{fig:TikhonovconvergePerturb}%
\end{figure}
\begin{figure}[ht]
     \subfloat[]
    {{ \centering \includegraphics[width=4.2cm]{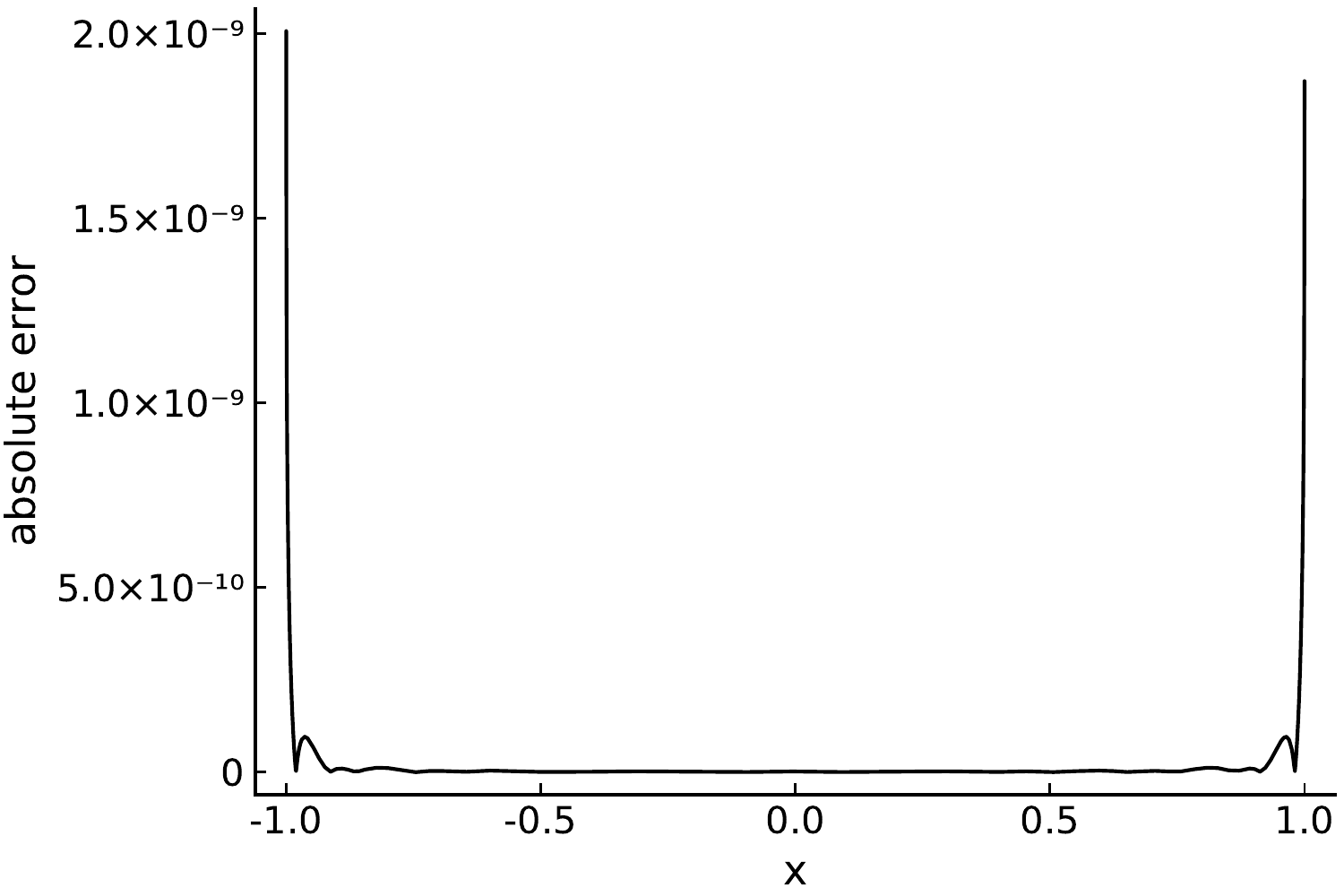} }}
     \subfloat[]
    {{ \centering \includegraphics[width=4.2cm]{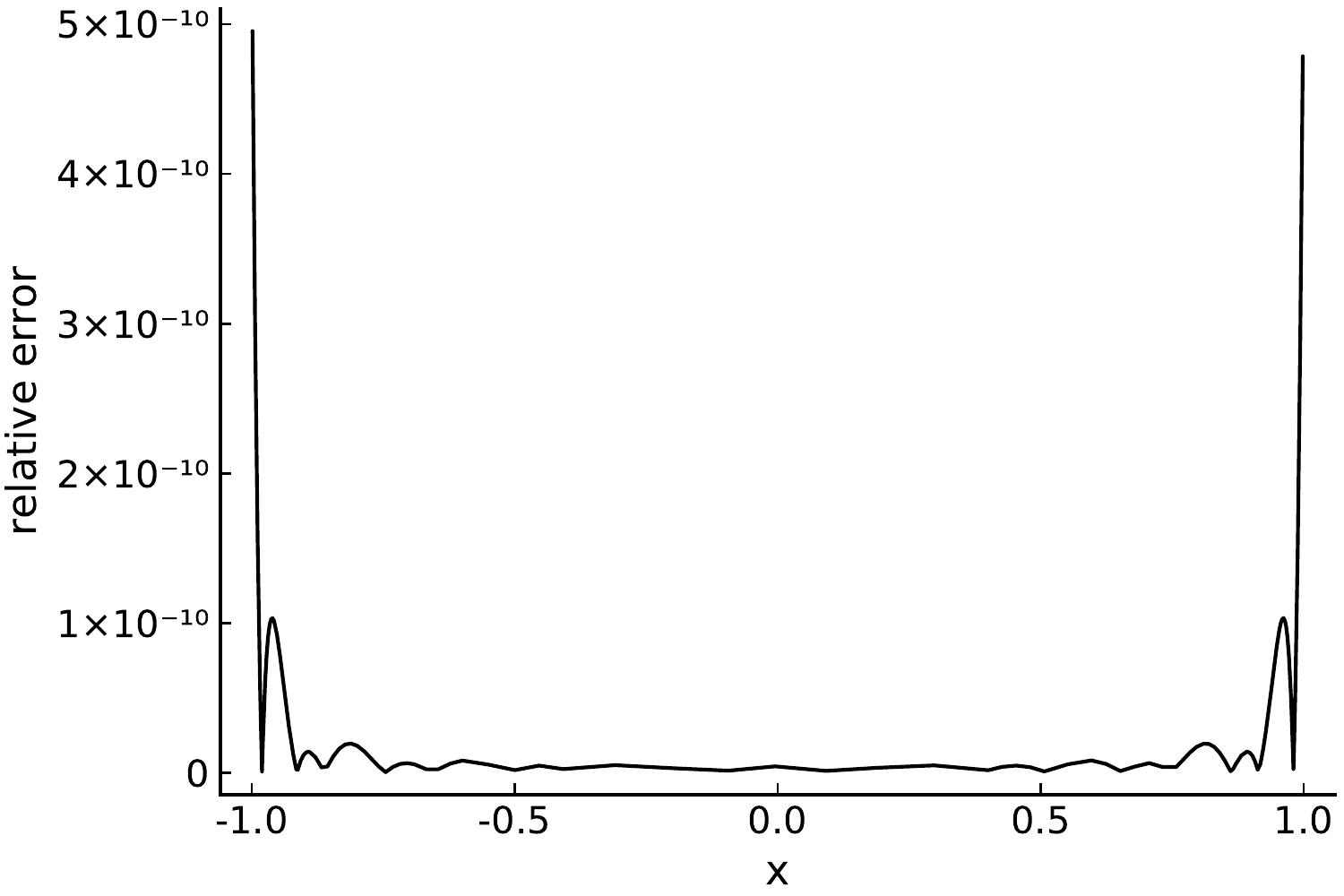} }}
         \subfloat[]
    {{ \centering \includegraphics[width=4.2cm]{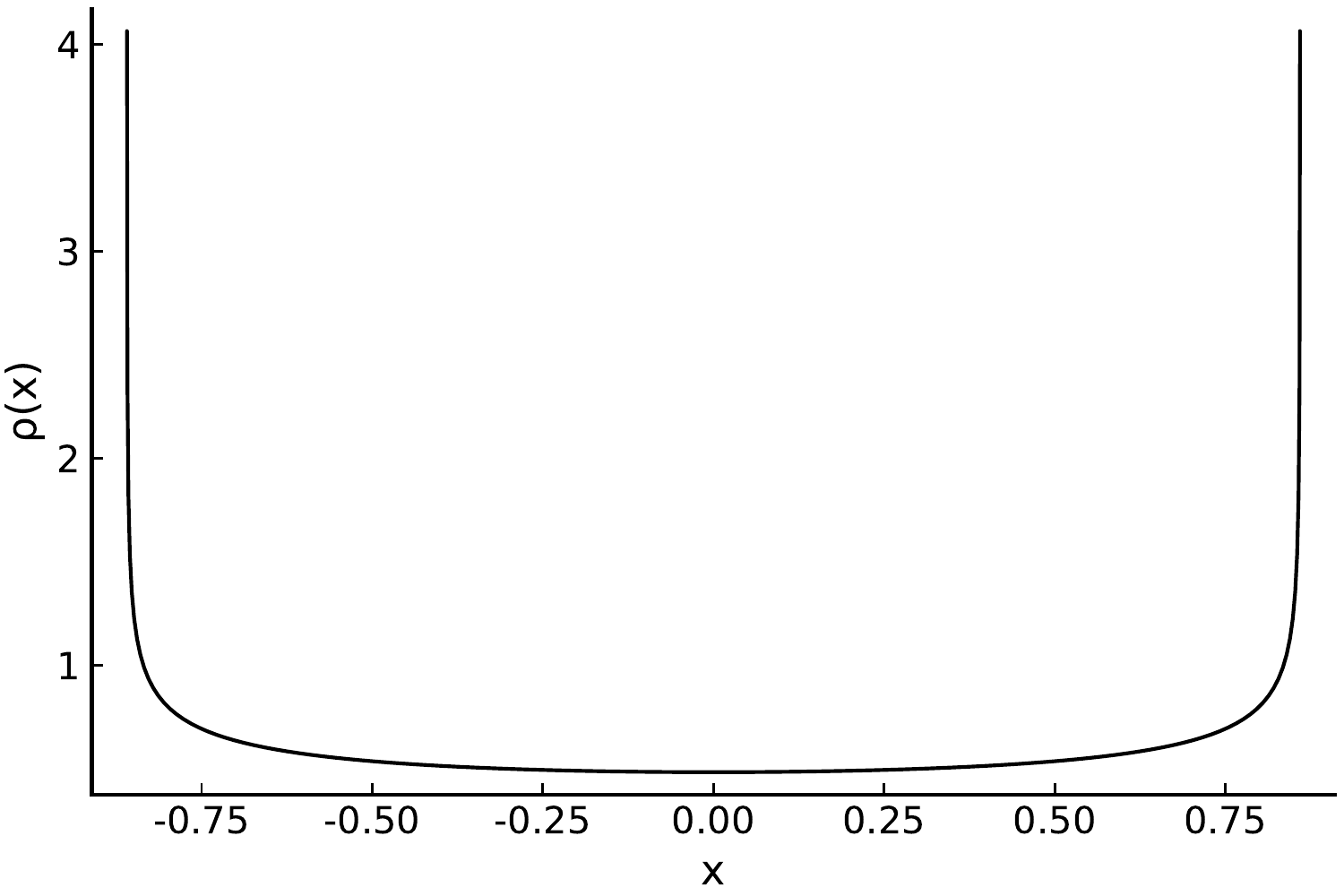} }}
    \caption{(A) and (B) respectively show absolute and relative errors comparing analytic and computed solutions for an attractive-repulsive equilibrium problem with parameters $\alpha = 2$, $\beta = 1.5$ and $M=1$. Normalized to $(-1,1)$ for ease of comparison. (C) shows the numerically obtained solution $\rho(x)$ on its radius of support.}%
    \label{fig:set1-1_errors}%
\end{figure}
\begin{figure}[ht]
     \subfloat[]
    {{ \centering \includegraphics[width=4.2cm]{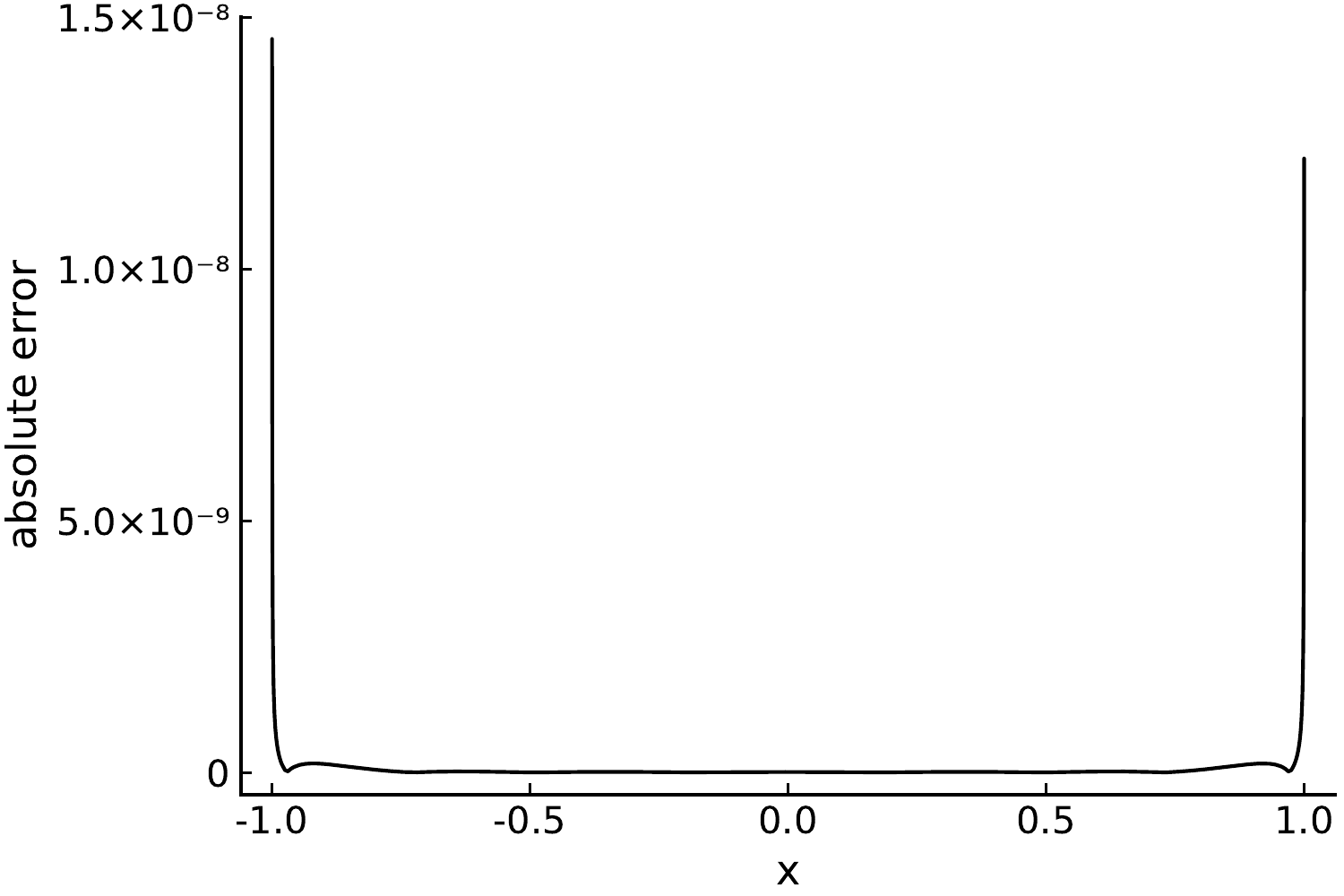} }}
     \subfloat[]
    {{ \centering \includegraphics[width=4.2cm]{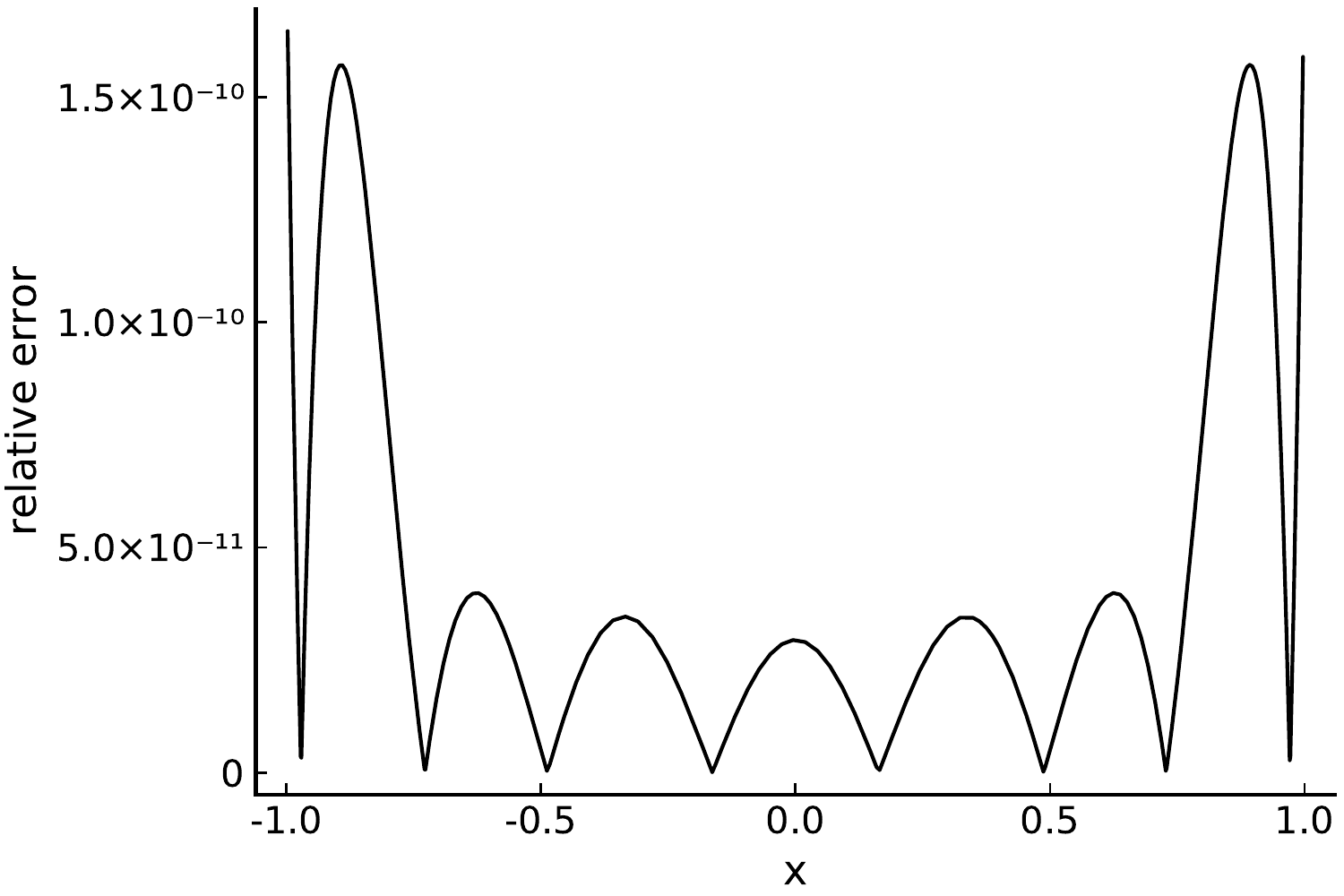} }}
         \subfloat[]
    {{ \centering \includegraphics[width=4.2cm]{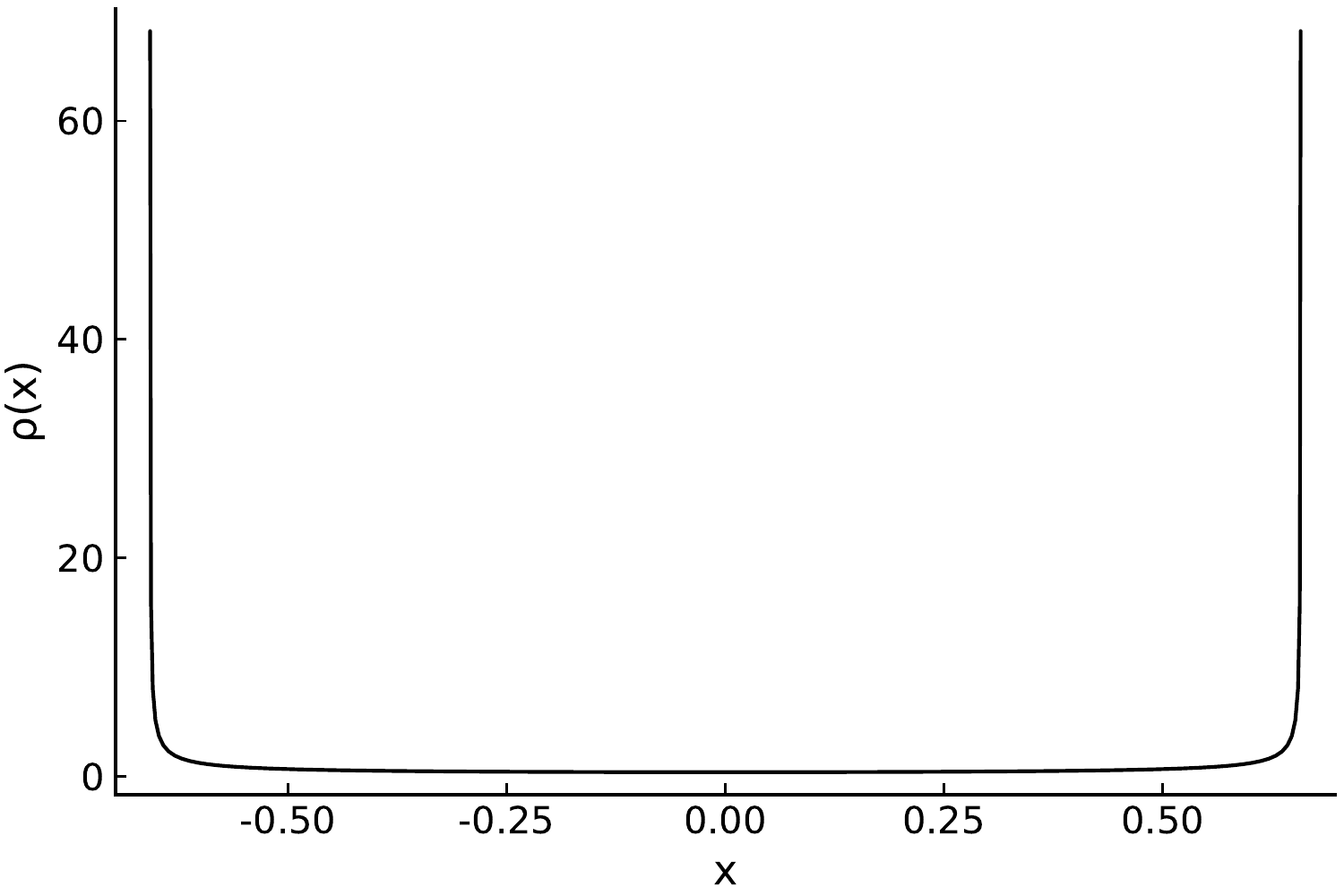} }}
    \caption{(A) and (B) respectively show absolute and relative errors comparing analytic and computed solutions for an attractive-repulsive equilibrium problem with parameters $\alpha = \frac{7}{3}$, $\beta = 2$ and $M=3$. Normalized to $(-1,1)$ for ease of comparison. (C) shows the numerically obtained equilibrium measure $\rho(x)$ on its radius of support.}%
    \label{fig:set1-2_errors}%
\end{figure}
\begin{figure}[ht]
     \subfloat[]
    {{ \centering \includegraphics[width=4.2cm]{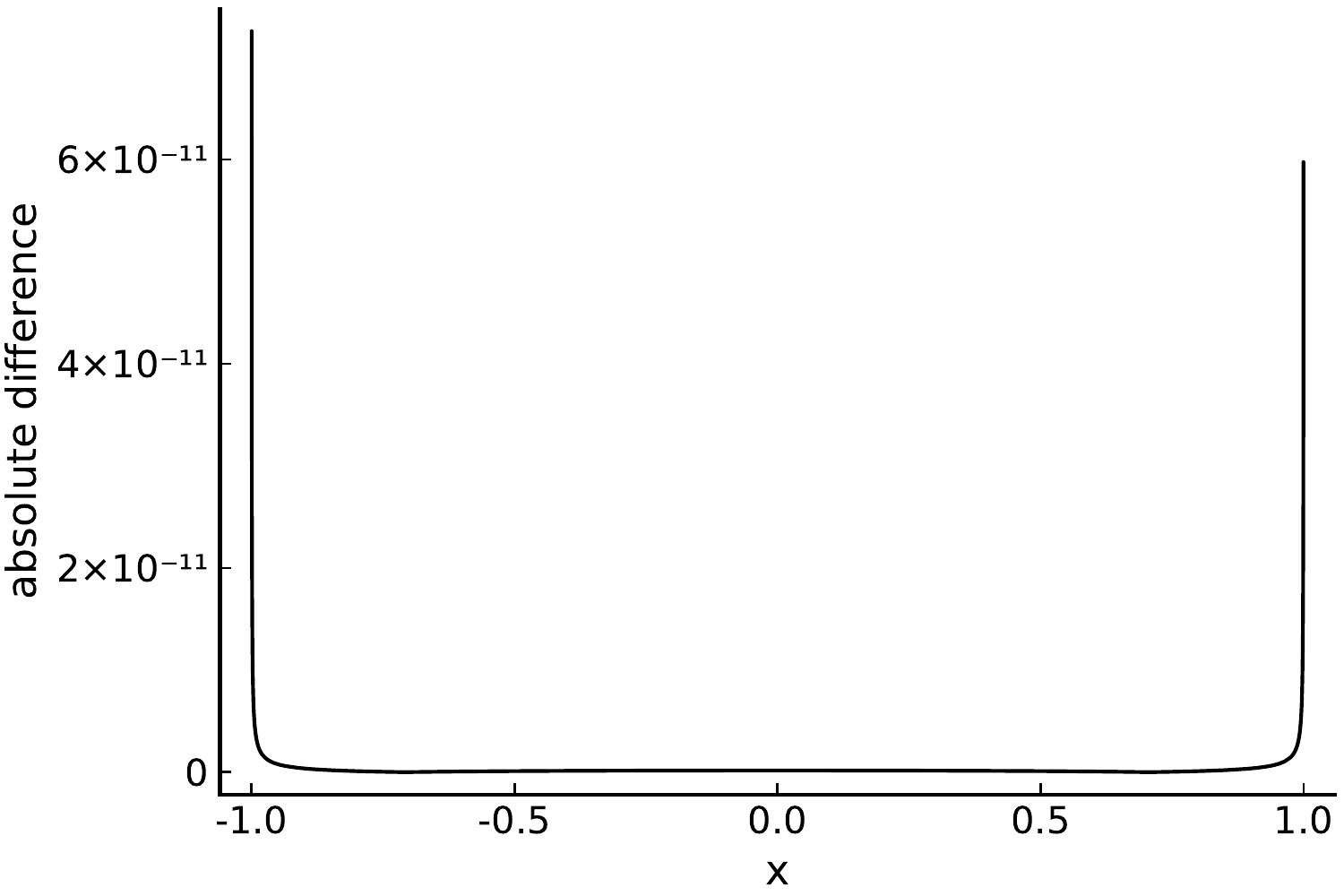} }}
     \subfloat[]
    {{ \centering \includegraphics[width=4.2cm]{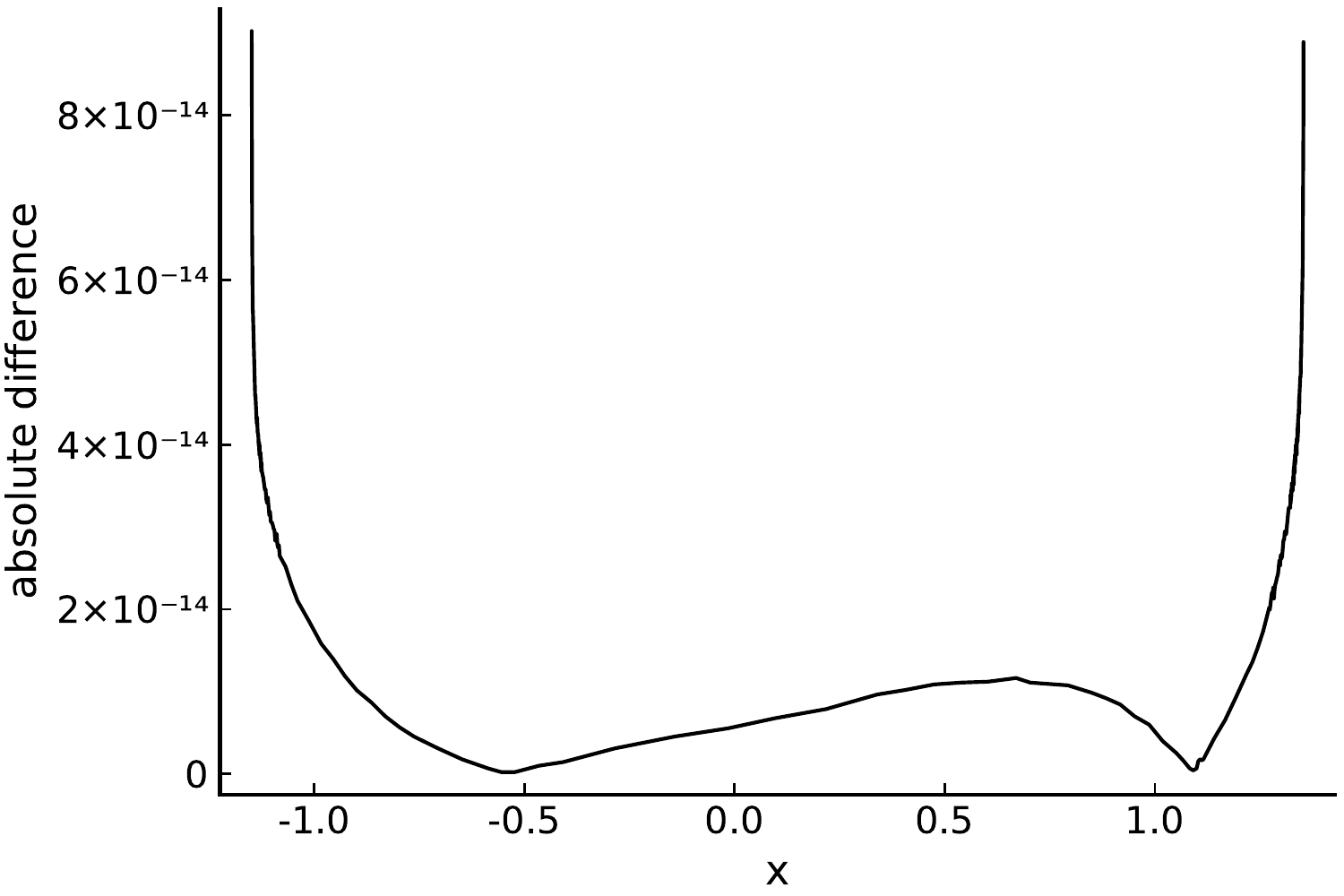} }}
         \subfloat[]
    {{ \centering \includegraphics[width=4.2cm]{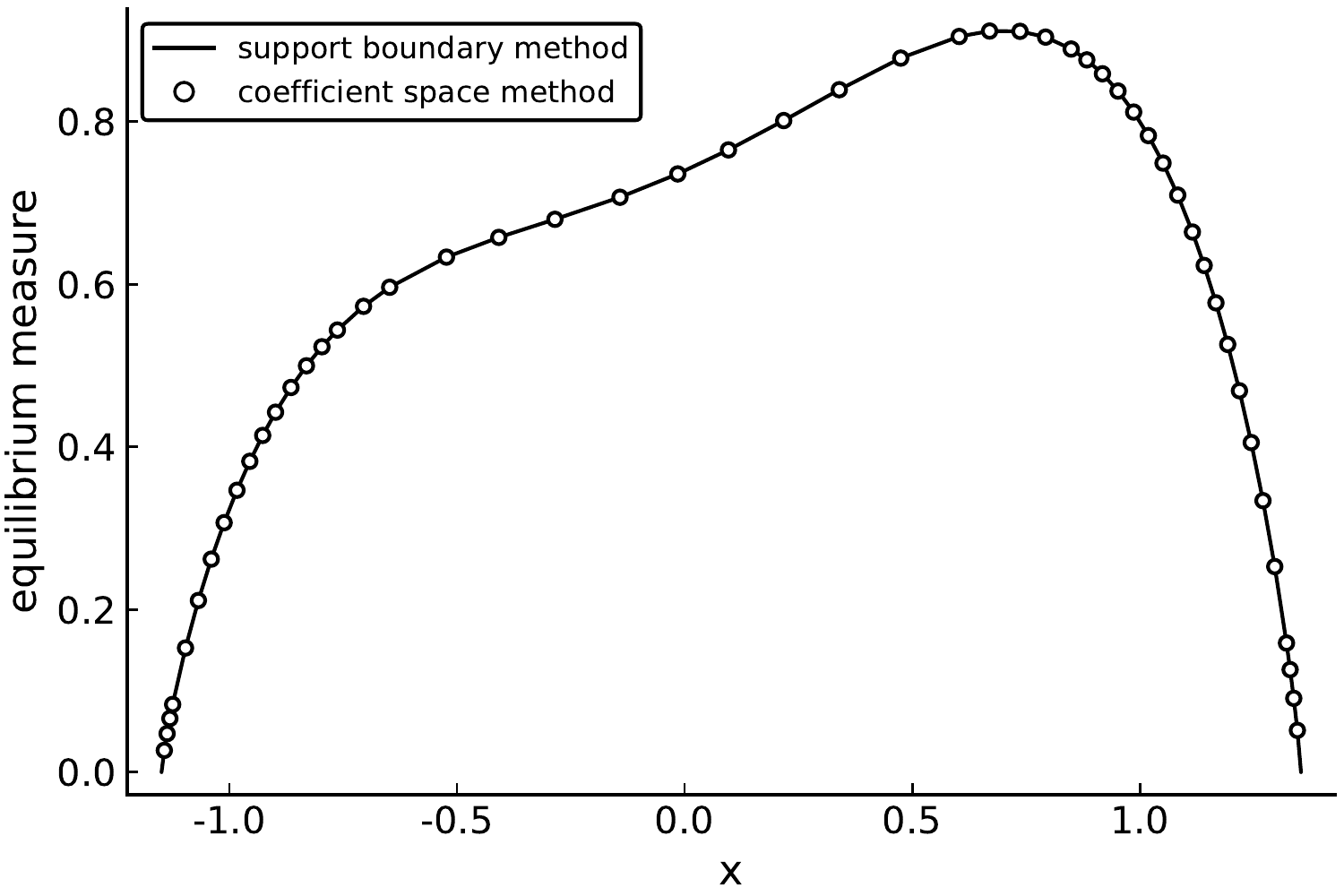} }}
    \caption{(A) shows absolute difference between the two different methods for the problem defined by \eqref{eq:example2-1}, (B) shows the difference for the problem  defined by \eqref{eq:example2-2}. (C) shows the solution obtained by the two methods for the asymmetric problem defined by \eqref{eq:example2-2}.}%
    \label{fig:set2numexp}%
\end{figure}
\subsection{Convergence of solution coefficients}\label{sec:convergencebandwidth}
In this section we investigate the convergence behaviour of the coefficients obtained by our method. In particular, we will compare the diagonal attractive operator and thus in a sense optimal non-analytic (i.e. not an even integer) case where $\alpha_1 \in (-1,1)$ with a higher bandwidth requiring case $\alpha_2 > 1$. We pick two pseudo-random $\alpha_i$ values for this comparison which are representative of the generic observed behaviour and fix a joint $\beta < \alpha_i$:
\begin{align}
(\alpha_1,\quad \beta,\quad M) &= \left(0.912,\quad 0.881,\quad 1\right),\label{eq:convergencelowhigh1}\\
(\alpha_2,\quad \beta,\quad M) &= \left(1.772,\quad 0.881,\quad 1\right),\label{eq:convergencelowhigh2}
\end{align}
As no analytic solutions are available in these generic cases, we show the convergence of the solution coefficients in the ultraspherical polynomial basis in the numerical sense in  Figure \ref{fig:revisionplots1}. Finally, we plot the solution equilibrium measures as well as discrete particle simulation histograms corresponding to the parameter choices in (\ref{eq:convergencelowhigh1}-\ref{eq:convergencelowhigh2}) in Figure \ref{fig:revisionplots2} to further demonstrate that the appropriate solutions were obtained even outside of the range of current analytic approaches.
\begin{figure}[ht]
     \subfloat[$(\alpha_1, \beta) = \left(0.912, 0.881 \right)$]
    {{ \centering \includegraphics[width=6.4cm]{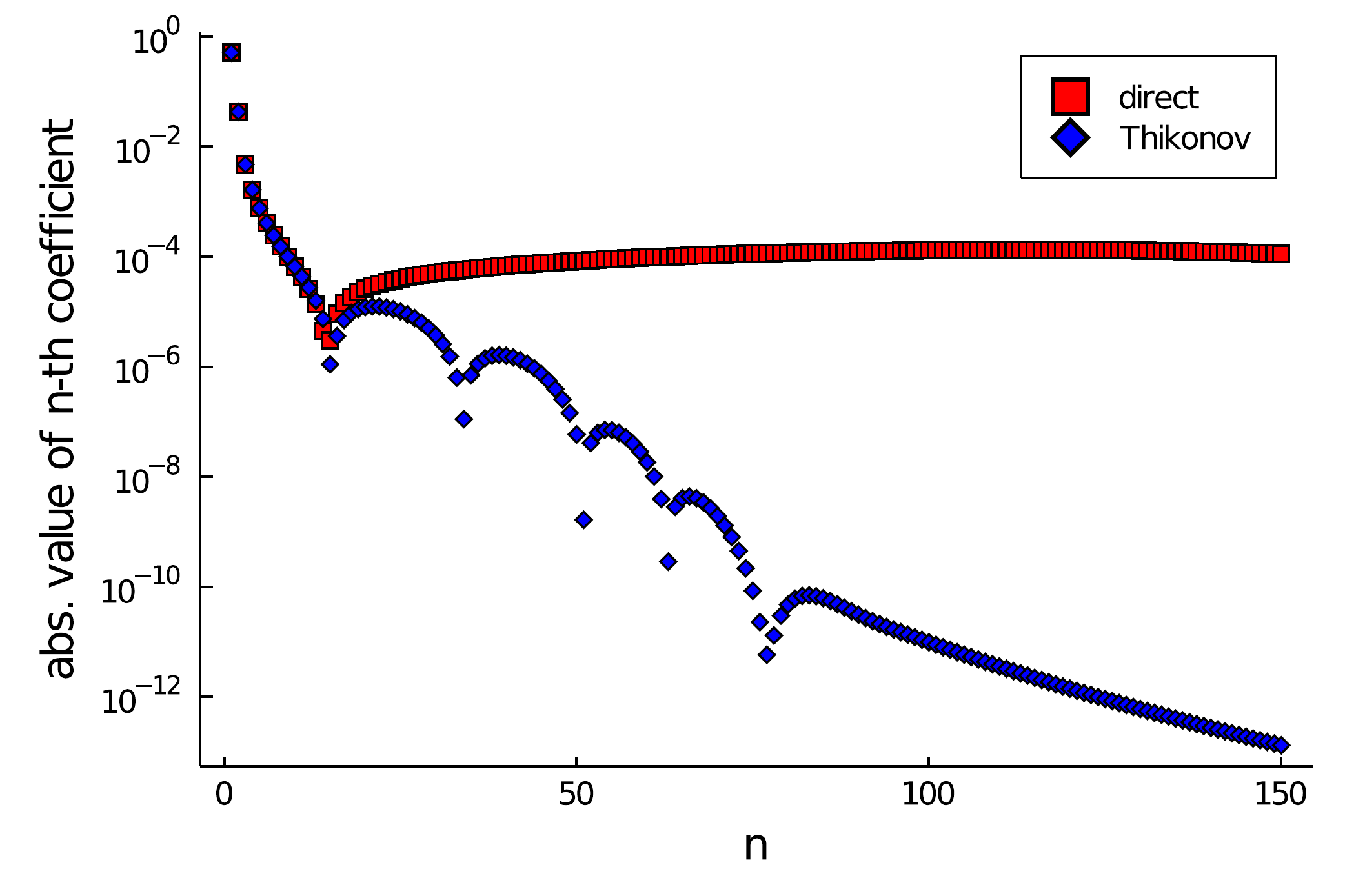} }}
     \subfloat[$(\alpha_2, \beta) = \left(1.772, 0.881 \right)$]
    {{ \centering \includegraphics[width=6.4cm]{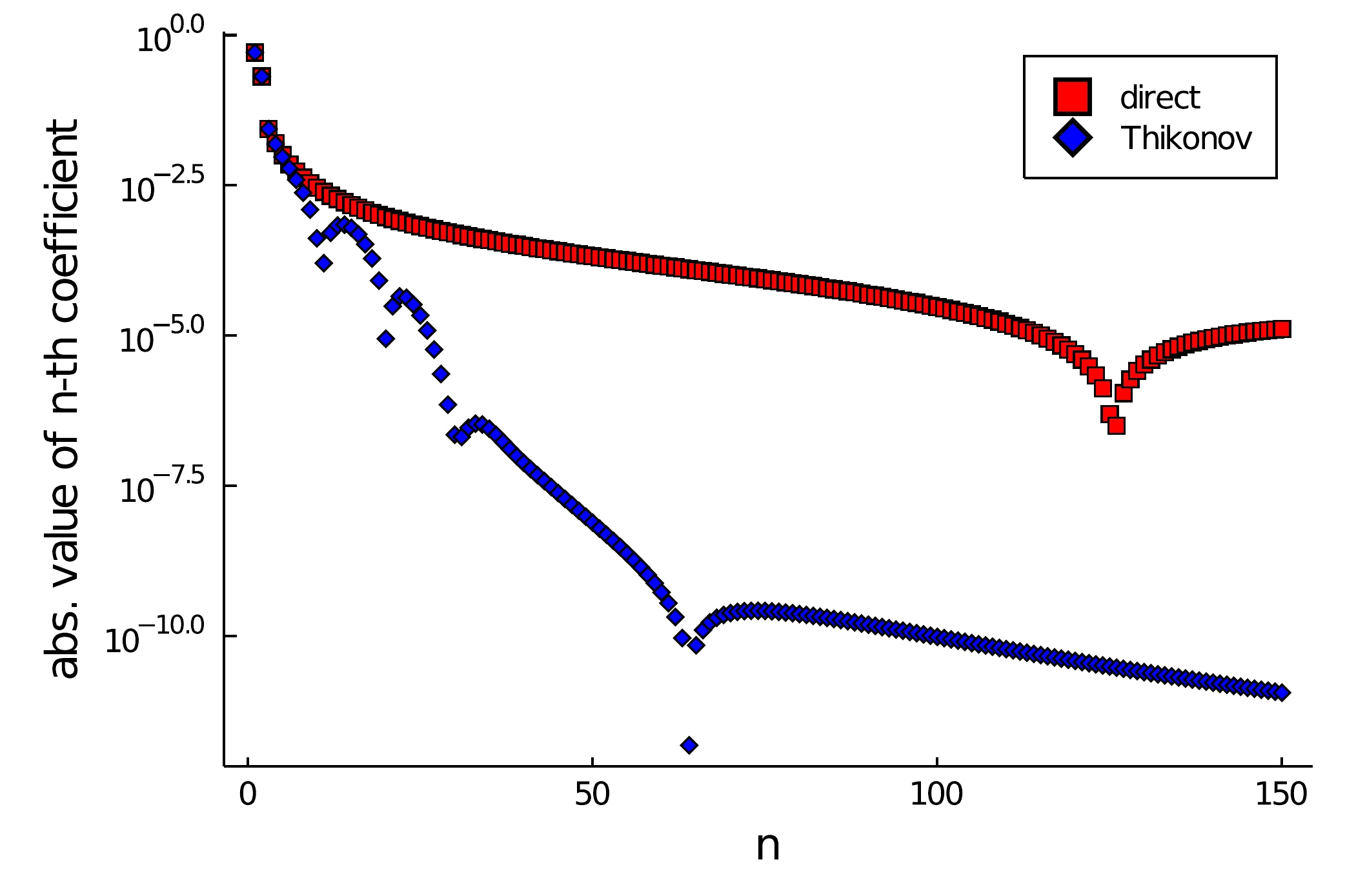} }}
    \caption{Semi-log plot of absolute values of the solution coefficients approximated with $n=150$ for problems (\ref{eq:convergencelowhigh1}-\ref{eq:convergencelowhigh2}).}%
    \label{fig:revisionplots1}%
\end{figure}
\begin{figure}[ht]
     \subfloat[$(\alpha_1, \beta) = \left(0.912, 0.881 \right)$]
    {{ \centering \includegraphics[width=6.4cm]{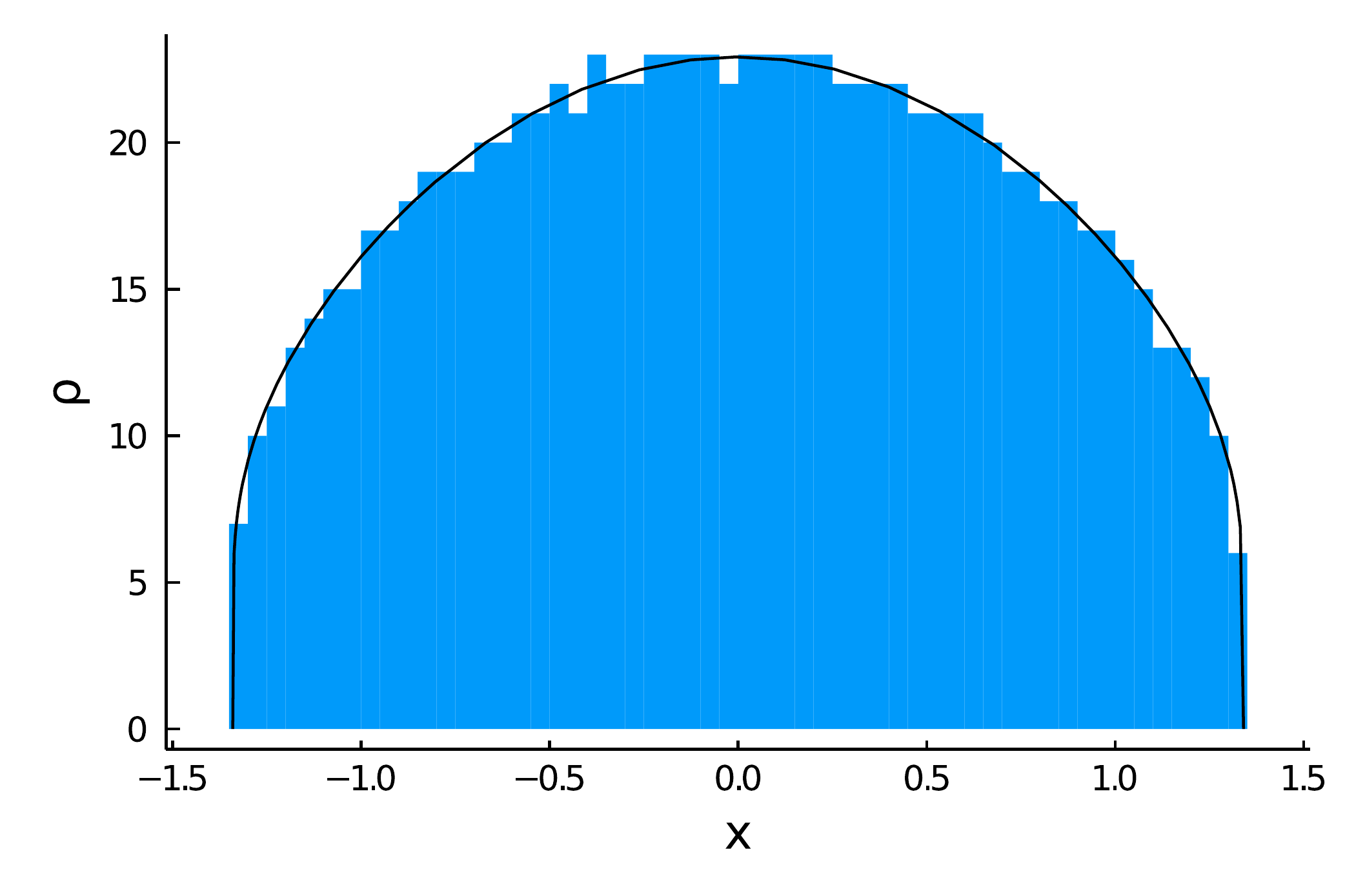} }}
     \subfloat[$(\alpha_2, \beta) = \left(1.772, 0.881 \right)$]
    {{ \centering \includegraphics[width=6.4cm]{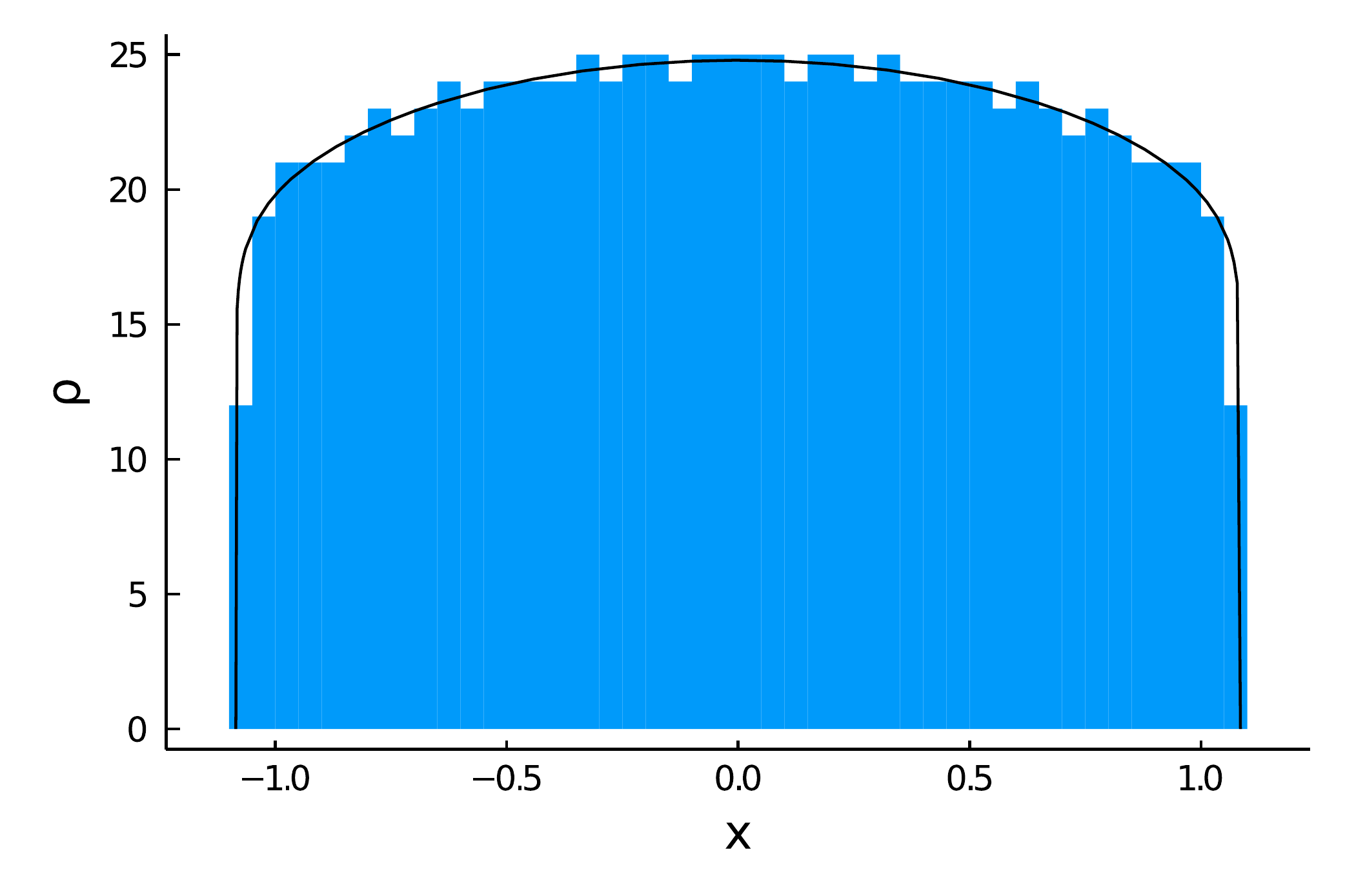} }}
    \caption{Computed solution measures in black and $1000$ particles based density histograms in blue for the problems in (\ref{eq:convergencelowhigh1}-\ref{eq:convergencelowhigh2}).}%
    \label{fig:revisionplots2}%
\end{figure}
\section{Numerical investigation of open problems}\label{sec:numericalopenQ}
We now move away from testing the introduced method against known solutions and alternative approaches and instead use it to investigate conjectures in parameter ranges outside of reach of current analytical methods. In Section \ref{sec:alpha4} we study the uniqueness of global minimizers outside of known parameter ranges. In Section \ref{sec:beta1} we explore the gap formation boundary and behaviour for general $\alpha$ and $\beta$. In Section \ref{sec:twointervalexperiments} we use our two interval approach to further explore the post-gap formation equlibrium measures, including comparisons to discrete particle models. Finally, Section \ref{sec:twointervalexperiments2} explores the transition to two interval support in more detail.
\subsection{Uniqueness and existence of single interval solutions}\label{sec:alpha4}
As mentioned in the introduction, various analytic results are known with regards to the existence and uniqueness of equilibrium measures for power law kernels. Some general existence results for global minimizers, including power law kernels and some Morse-type potentials, can be found in \cite{canizo_existence_2015}. Choksi et al. derived the unique existence of a global minimizer for power law kernels with parameters $\alpha=2$ and $\beta \in (-1,0)$ in \cite{choksi_minimizers_2015}. Lopes \cite{lopes_uniqueness_2019} extended this known parameter range with uniqueness to $\alpha \in (2,4)$, $\beta	\in (-1,0)$. Carrillo and Huang \cite{carrillo_explicit_2016} obtained conjectured solutions for various parameter ranges where either $\alpha$ or $\beta$ is an even integer, which were recently proved to be the unique global minimizers in \cite{carrillo_radial_2021}.\\
In this section, we explore the question of uniqueness of equilibrium measure solutions for the entire admissible parameter range $\beta \in (-1,2)$ from a numerical perspective. Our results agree with the proved results in \cite{lopes_uniqueness_2019} and add additional support to the conjecture that the steady states found in \cite{carrillo_explicit_2016} are indeed the unique global minimizers (obtained prior to the proofs given in \cite{carrillo_radial_2021}).\\
In Figure \ref{fig:uniquenesstests} we plot the energy as a function of the support radius for parameter choices which are representative of the generally observed behaviour---in the case where known analytic solutions exist we indicate its location and find that it coincides with the energy minima as one would expect. While the energy may be made arbitrarily small for each parameter choice by continuously increasing the radius of the support, positive measures only exist in the neighbourhood of the observed local minimum. This does not pose a problem for optimization methods, as the optimization methods can easily be tuned to find local minima and a soft penalty method which penalizes negative measures is straightforwardly implemented for well-behaved functions like these. These local energy minimizers are thus found to be the global minimizers among admissible measures. The general pattern observed for energy plots of this sort is the existence of positive measures with compact support in a neighbourhood of the minimizer, with only not strictly positive measures existing for smaller or larger radii. Some specific problems appear to only turn negative as the radius grows, see Figure \ref{fig:uniquenesstests}, but this does not impact the uniqueness of the obtained minimizers as the energy grows with decreasing radius.\\
Our method also allows us to study where single interval solutions can exist. As we will explore in more detail in the following section, for certain high parameter ranges we observe gap forming behaviour for the equilibrium measures and one would thus have to utilize multiple interval approaches to find an appropriate solution (with gap). To study the existence of single interval solutions, and thus also gap forming behaviour, we can for example take fixed $\alpha = 3.5$ and increase $\beta$ within the interval $(1,2)$ starting at $1$ until we observe the described phenomenon for the single interval approach. A rough visualization of this can be found in Figure \ref{fig:existencesingleinterval}, where we see that for $\alpha = 3.5$ single interval solutions stop existing somewhere approximately at $1.71$ --- we study this boundary for general $\alpha$ and in more detail in the next section.
\subsection{Gap forming behaviour when $\beta \geq1 $}\label{sec:beta1}
Analytically obtained single interval steady state solutions for certain integer parameters show negative values at the origin for high enough values \cite{carrillo_explicit_2016}, indicating that the interval of support splits into multiple intervals. Numerical investigation on the basis of discrete particle swarms has further confirmed that a gap appears to form around the origin \cite{balague_dimensionality_2013} but the precise nature of the transition boundary from single interval support to multiple interval support is unknown. Figure \ref{fig:gapformingbehaviour} shows scans of the parameter range $\alpha \in (2,4)$ and $\beta \in (1,2)$ using the method introduced in this paper, clearly showing the gap-formation boundary, which may be computed with arbitrary smoothness by probing a denser grid of points. As the trajectory of this boundary is not analytically known, it is of interest to explore it using the introduced numerical method.
\begin{figure}[ht]
         \subfloat[$(\alpha, \beta, M) = \left(2, -0.5, 1 \right)$]
    {{ \centering \includegraphics[width=5.8cm]{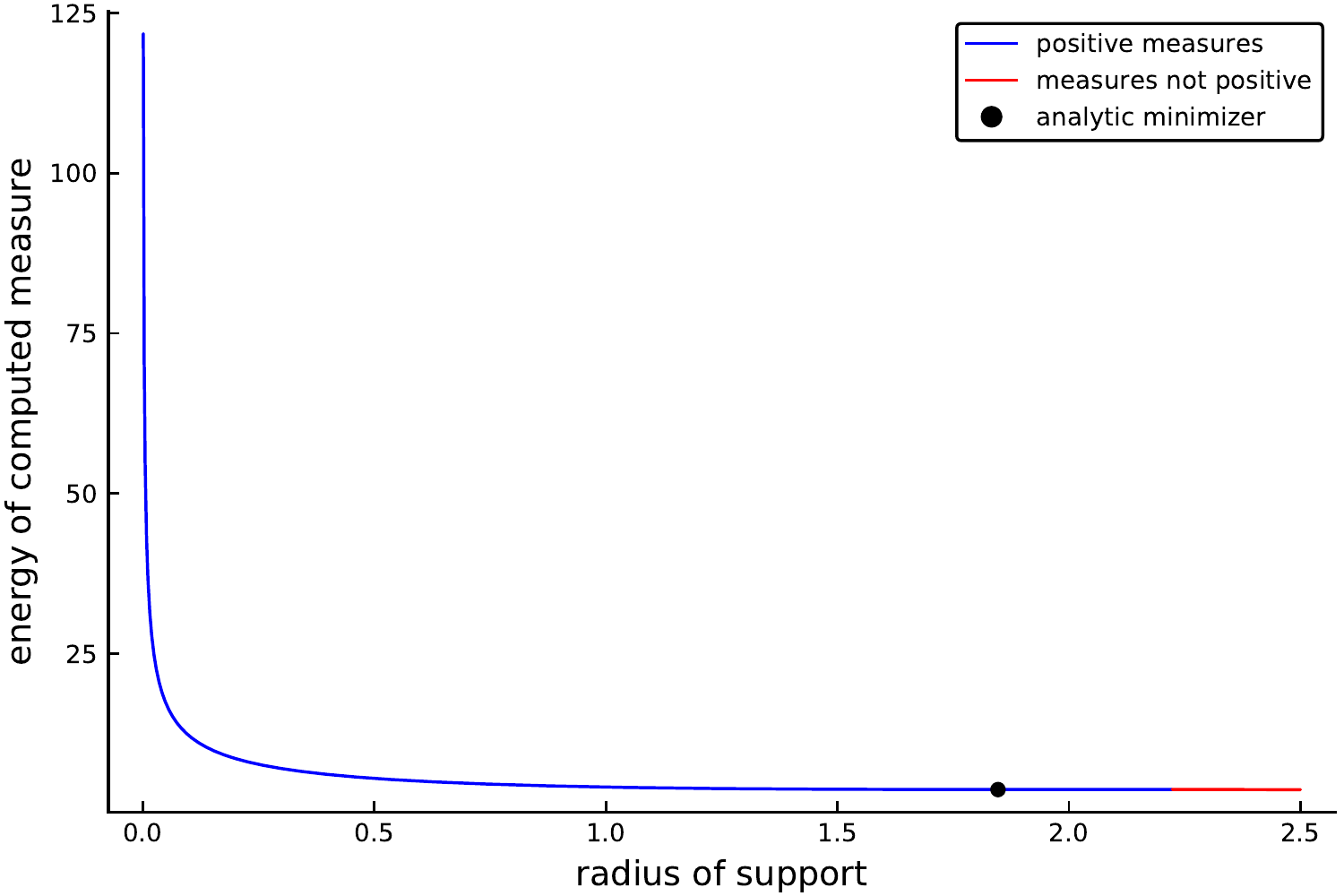} }}
             \subfloat[$(\alpha,  \beta, M) = \left(2, -0.5, 1 \right)$]
    {{ \centering \includegraphics[width=5.8cm]{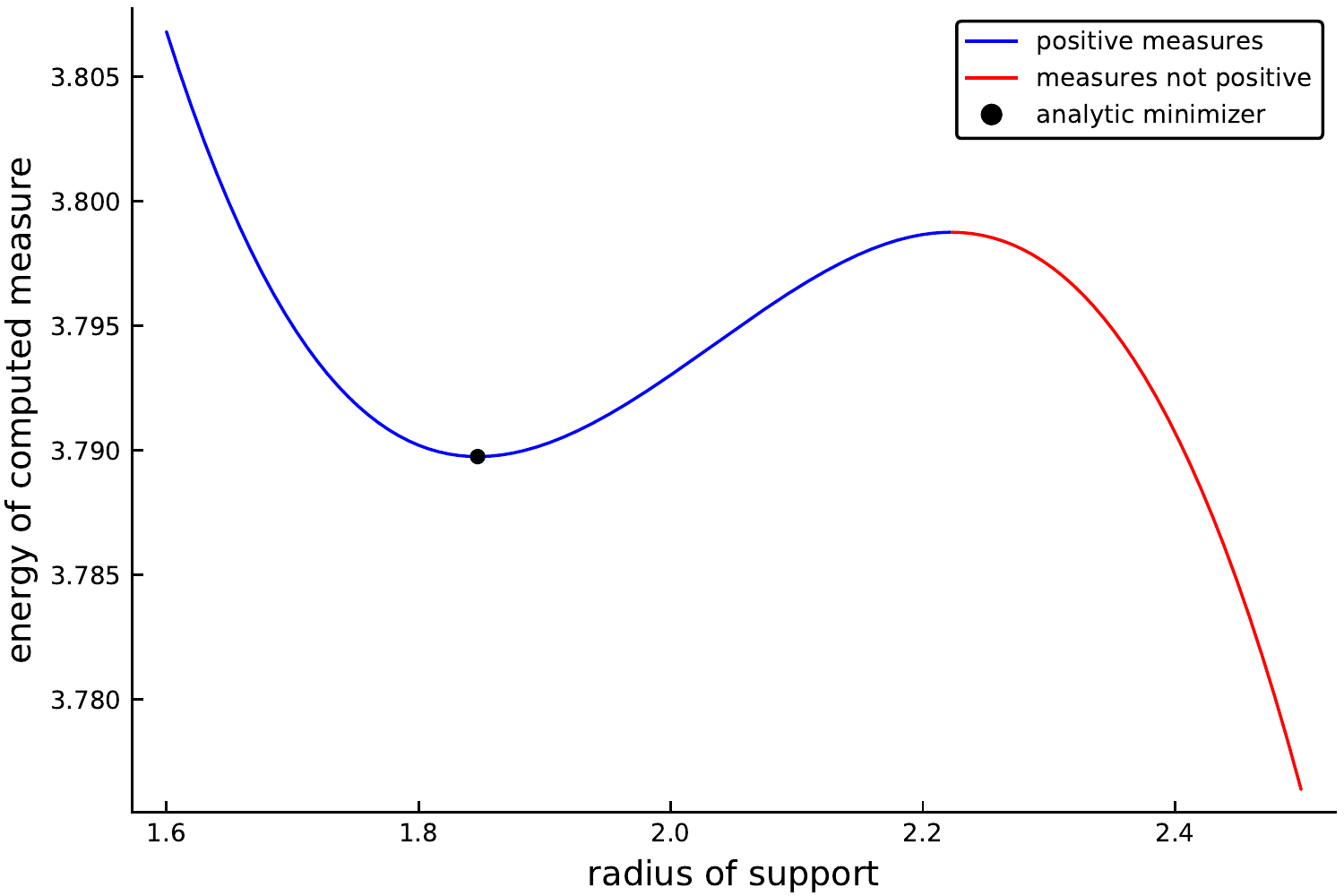} }}\\
         \subfloat[$(\alpha,  \beta, M) = \left(3.5, 1.4, 1 \right)$]
    {{ \centering \includegraphics[width=5.8cm]{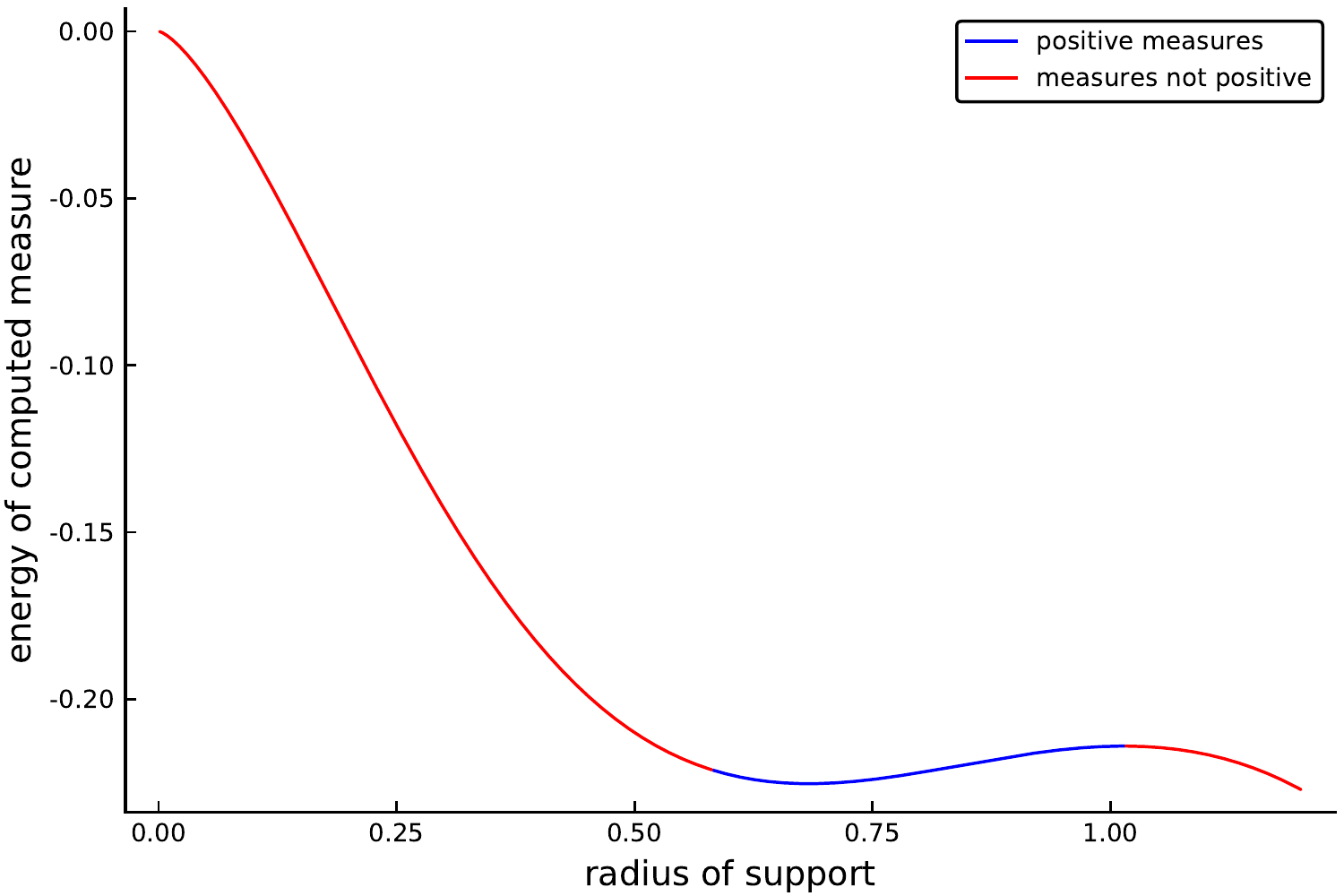} }}
    \caption{Energy as a function of the radius of computed measures with compact single interval support for different parameters. (B) is a close-up of the data in (A) to make the existence of a well-posed minimum more recognizable.}%
    \label{fig:uniquenesstests}%
\end{figure}
\begin{figure}[ht]
    \centering 
    \includegraphics[width=6cm]{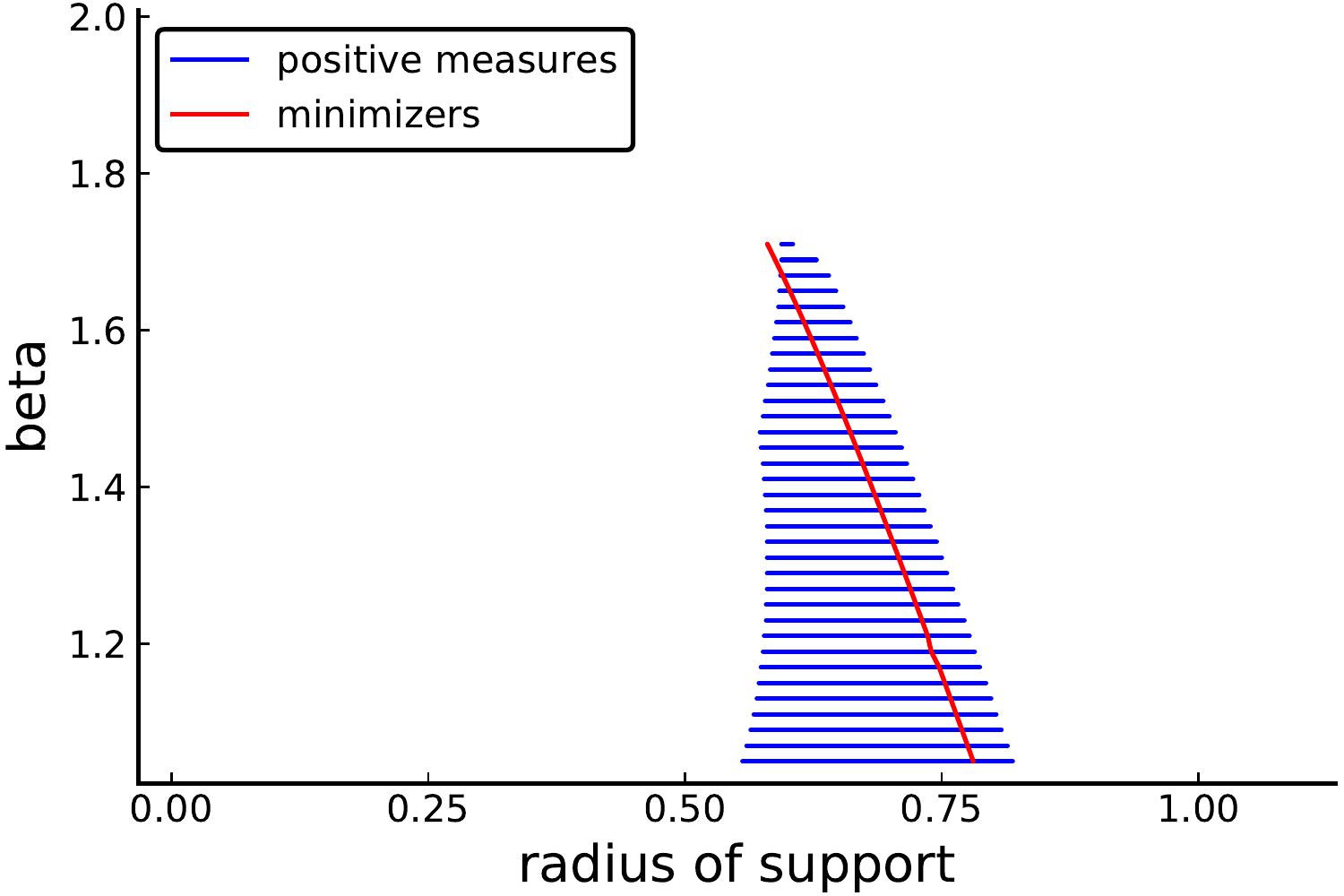}
    \caption{The existence of positive measures of single interval support for given radius is indicated in blue for $\alpha=3.5$ and $\beta \in (1,2)$ on the $y$-axis in steps of $0.02$. Positive measures stop existing approximately at $1.71$.}%
    \label{fig:existencesingleinterval}%
\end{figure}
\begin{figure}[ht]
     \subfloat[]
    {{ \centering \includegraphics[width=4cm]{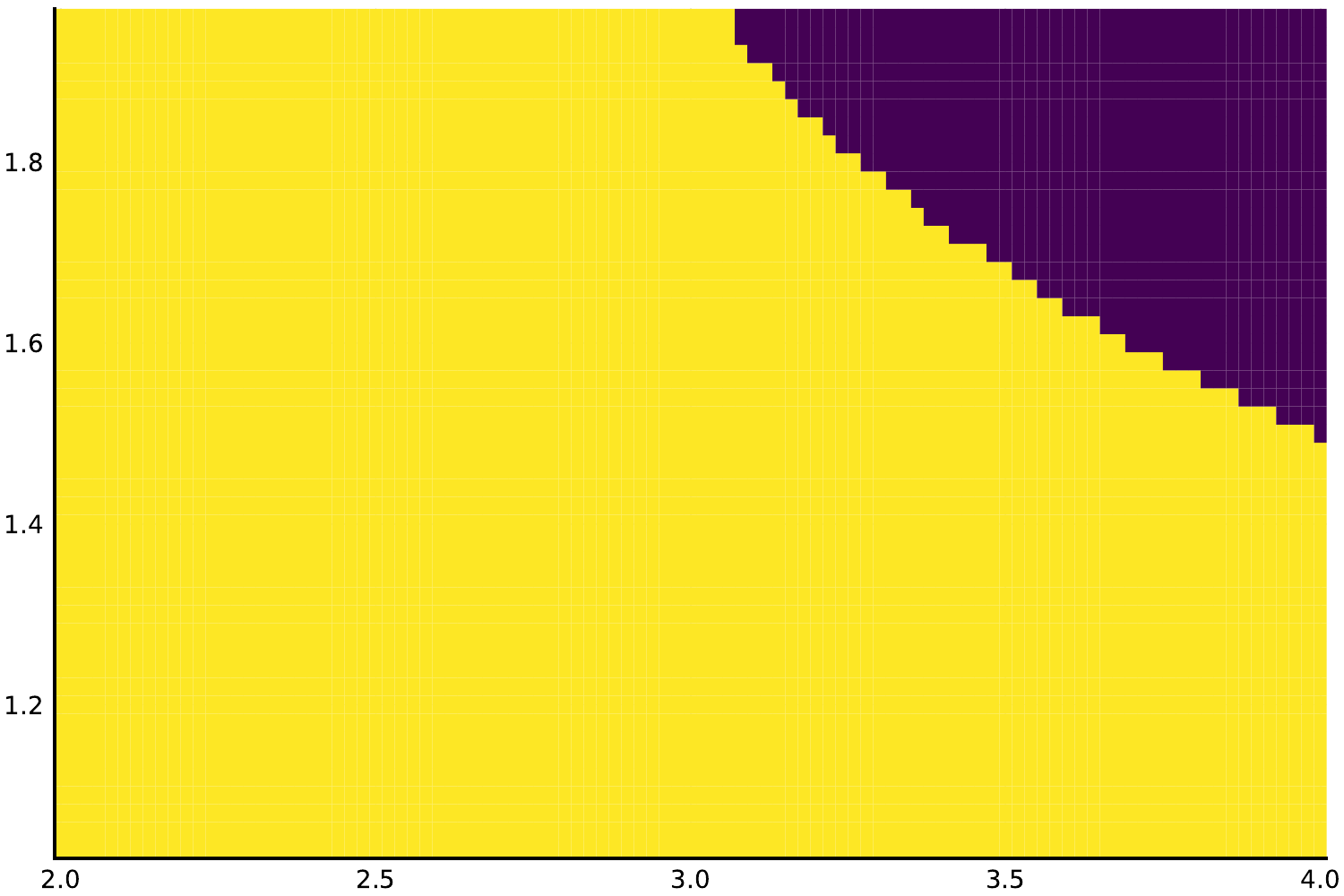} }}
     \subfloat[]
    {{ \centering \includegraphics[width=4cm]{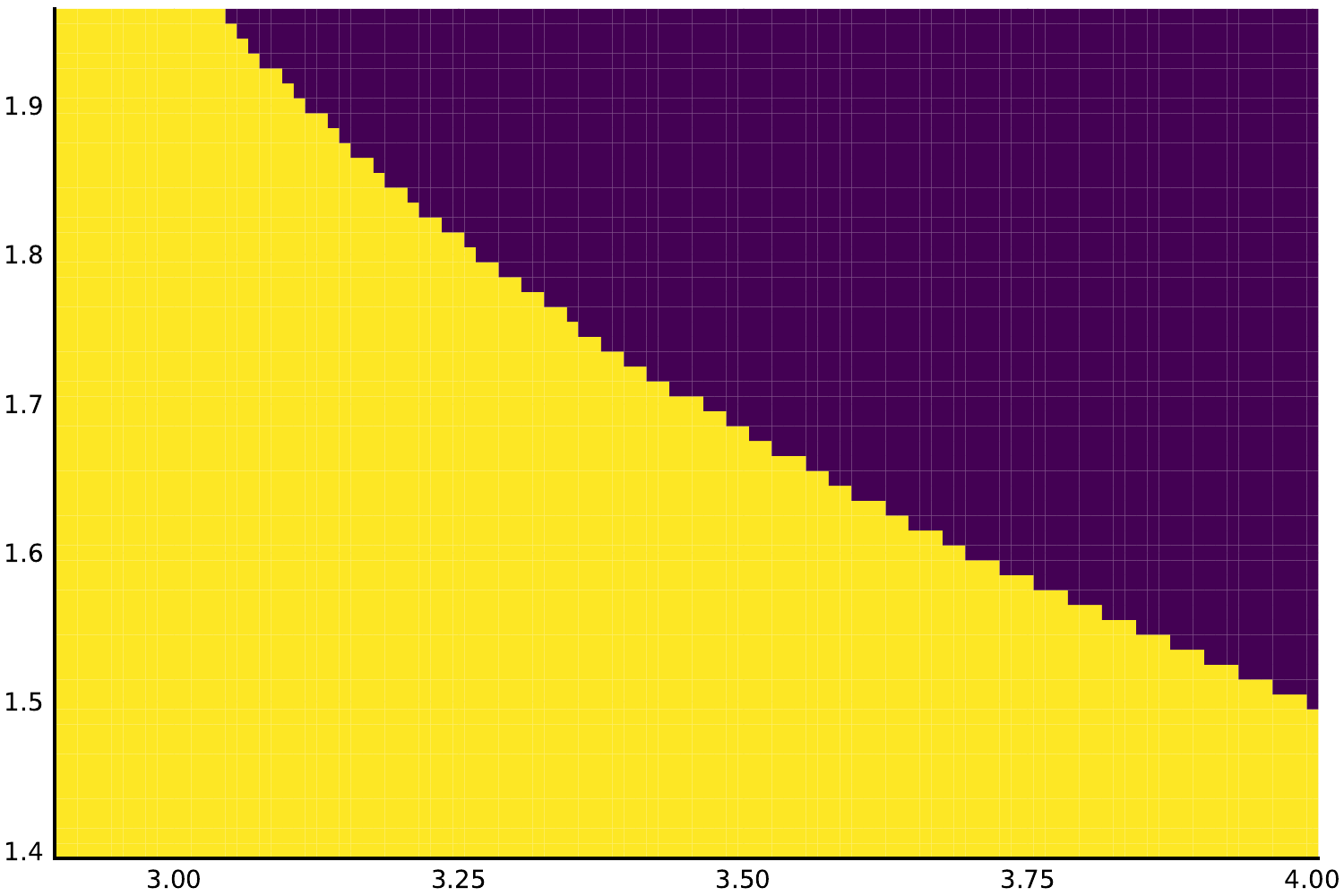} }}
         \subfloat[]
    {{ \centering \includegraphics[width=4cm]{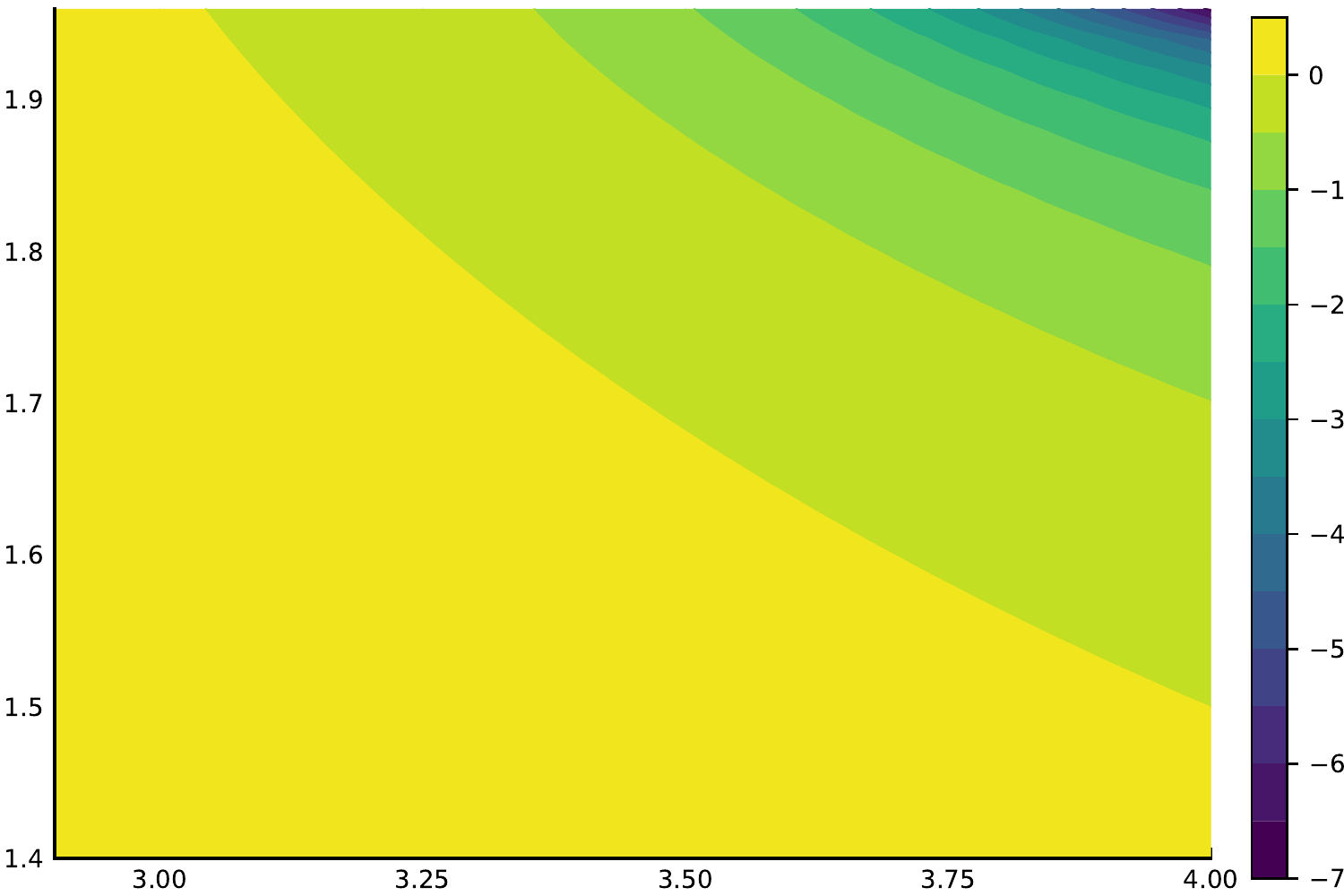} }}
    \caption{(A) shows gap presence with $\alpha$ on the $x$- and $\beta$ on the $y$-axis, while (B) shows higher resolution scan of the boundary. Bright region indicates a positive measure, dark region indicates presence of a gap. (C) shows a contour plot of the minimum values of the equilibrium measures obtained in the zoomed segment. Only the brightest contour represents positive equilibrium measures.}%
    \label{fig:gapformingbehaviour}%
\end{figure}

\subsection{Numerical experiments on two interval measures}\label{sec:twointervalexperiments}
We numerically explore equilibrium measure solutions in the range in which the single interval approach yields negative results and thus implies a gap at the origin, splitting the support into two intervals. We study three example parameter choices: 
\begin{align}
(\alpha_1,\quad \beta_1,\quad M_1) &= \left(4,\quad 1.61,\quad 1\right),\nonumber\\
(\alpha_2,\quad \beta_2,\quad M_2) &= \left(3.34,\quad 1.83,\quad 1\right),\label{eq:twointervalexp}\\
(\alpha_2,\quad \beta_2,\quad M_2) &= \left(3.5,\quad 1.6,\quad 1\right).\nonumber
\end{align}
The parameters with index $1$ involve an even integer value attractive force for which the negative values around the origin in the single interval case were analytically observed in \cite{carrillo_explicit_2016}, which prompted the authors to ask questions about the form of the true solution. The parameters with index $2$ are a more generic parameter combination for which we also find negative values in the single interval approach. The parameters with index $3$ represent a case where the single interval case is well-behaved and expected to be the true solution. It thus serves as a control, where the two interval approach should in principle guide us to the fact that a single interval approach may be more appropriate, by resulting in the lowest energy states for positive measures when the inner boundaries approach $a=-a=0$.\\ 
In Figure \ref{fig:energycontours} we show contour plots for varying values of $a$ and $b$ of the energy for positive measures on two interval support $[-b,-a]\cup[a,b]$. We plot the measures obtained via a constrained optimization over the admissible measures in Figure \ref{fig:twointervalsingleapproach}. Figure \ref{fig:twointervalsingleapproach} also compares the obtained measures to the results of a discrete particle simulation similar to what was described for two and three dimensions in \cite{carrillo_particle_2010} and single interval approach solutions as discussed above and in \cite{carrillo_explicit_2016}.\\
Both the single interval and the discrete particle computations allow us to make predictions about the two interval support, although in the case of the single interval approach this becomes less accurate as the gap increases in size. Conversely, the results of the two interval method can be used to predict whether a single interval solution exists, as is the case for example $3$ in this section where the method implies that the lowest energy positive measures are found near $a=0$, see Figure \ref{fig:energycontours}(C). In both cases in which the support splits into two components, $\rho(x)$ appears to increase as $x$ approaches $a$ and $-a$, with singularities at the inner boundary points. This also held true in numerous other numerical experiments in the two interval support parameter range and is reproduced in particle simulations.
\\
As in the single interval case, excessive regularization can lead to inadmissible solutions appearing admissible and thus obtaining a sensible result is dependent on choosing a good Tikhonov parameter. A good guiding principle is looking for the smallest Tikhonov parameter for which the method is stable as the order of polynomial approximation is increased. Future work on multiple interval methods should explore more specialized regularization approaches to try and minimize this error.
\begin{figure}[ht]
     \subfloat[$(\alpha_1,\beta_1) = \left(4, 1.61\right)$]
    {{ \centering \includegraphics[width=8cm]{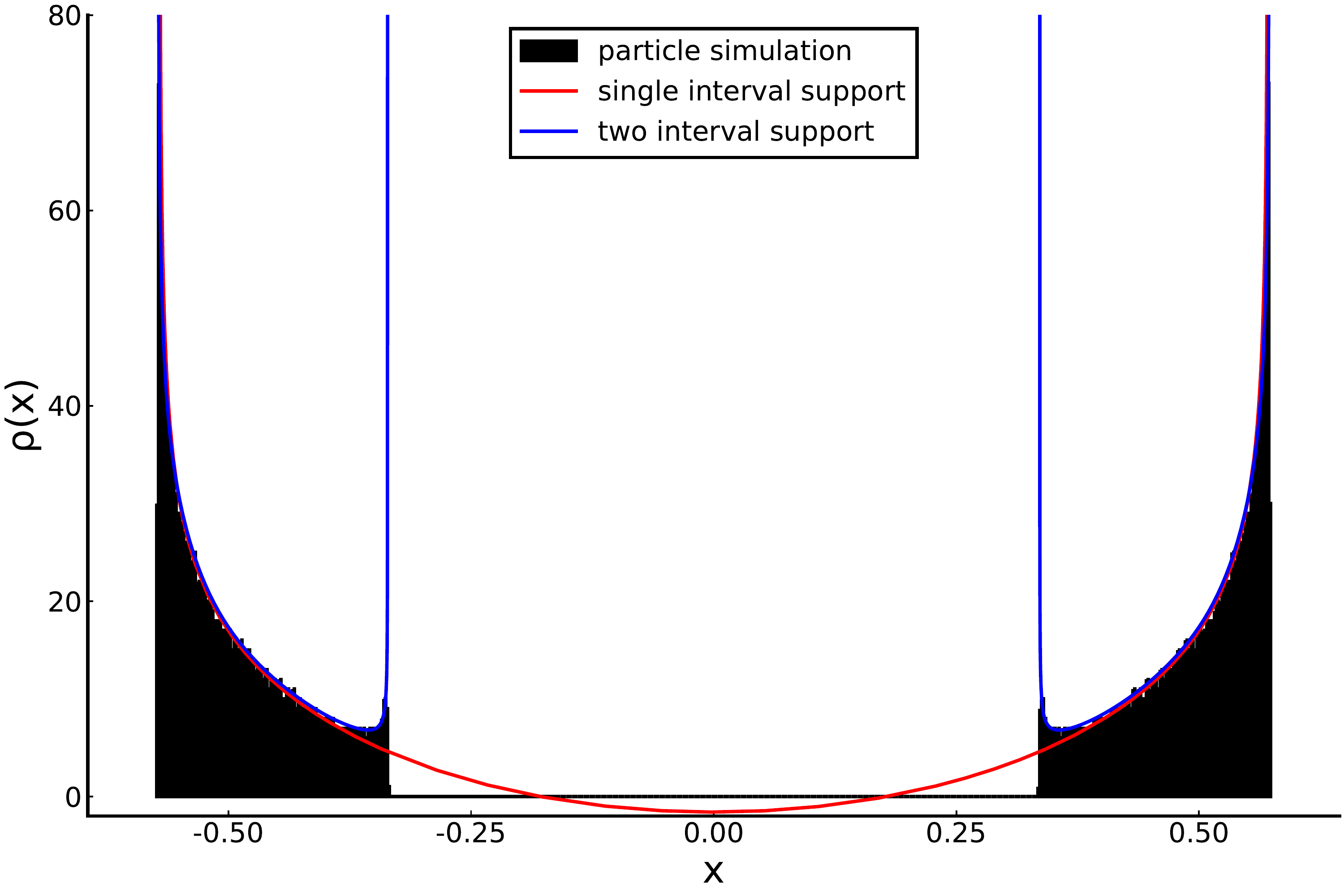} }}\\
     \subfloat[$(\alpha_2,\beta_2) = \left(3.34, 1.83\right)$]
    {{ \centering \includegraphics[width=8cm]{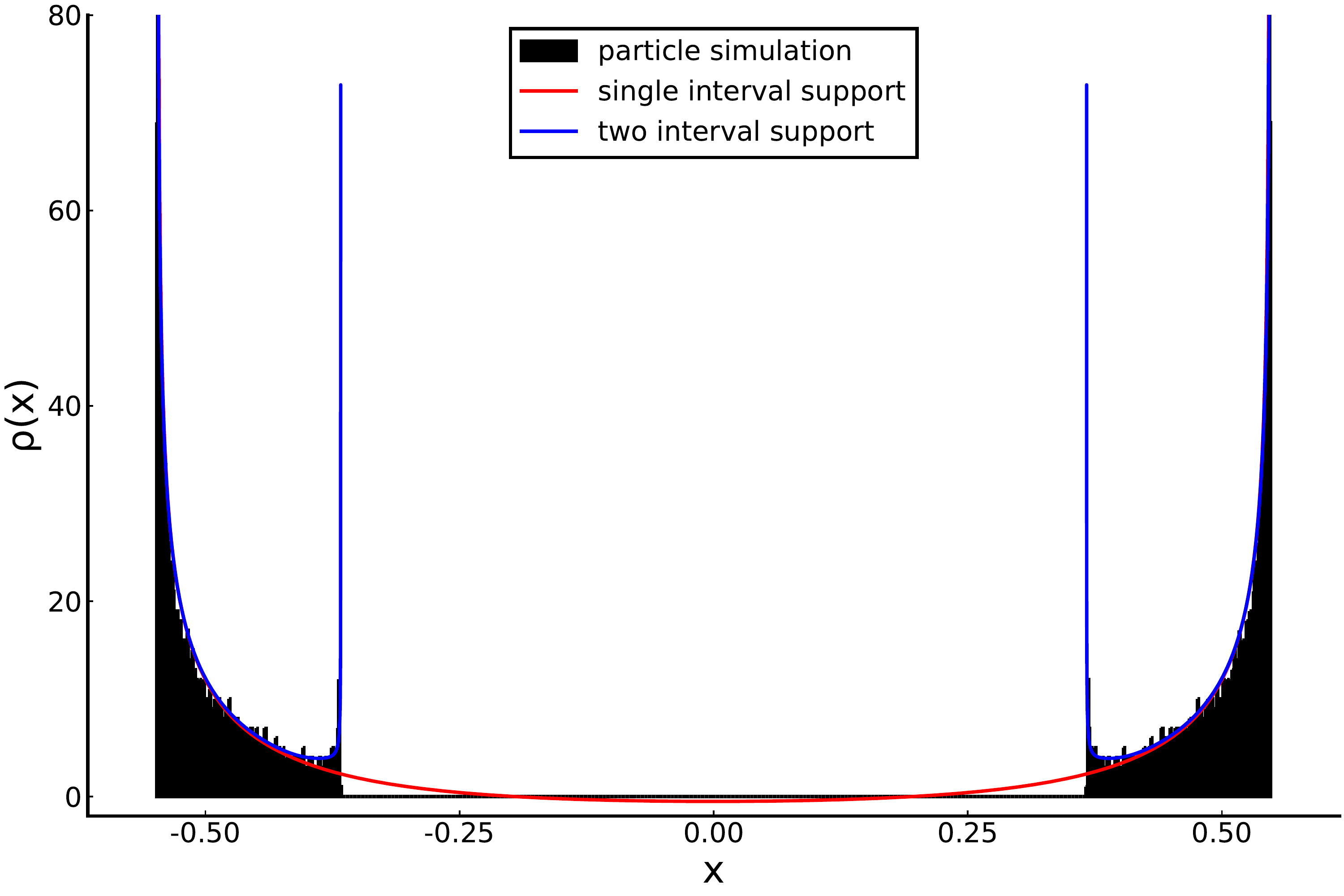} }}\\
     \subfloat[$(\alpha_3,\beta_3) = \left(3.5, 1.6\right)$]
    {{ \centering \includegraphics[width=8cm]{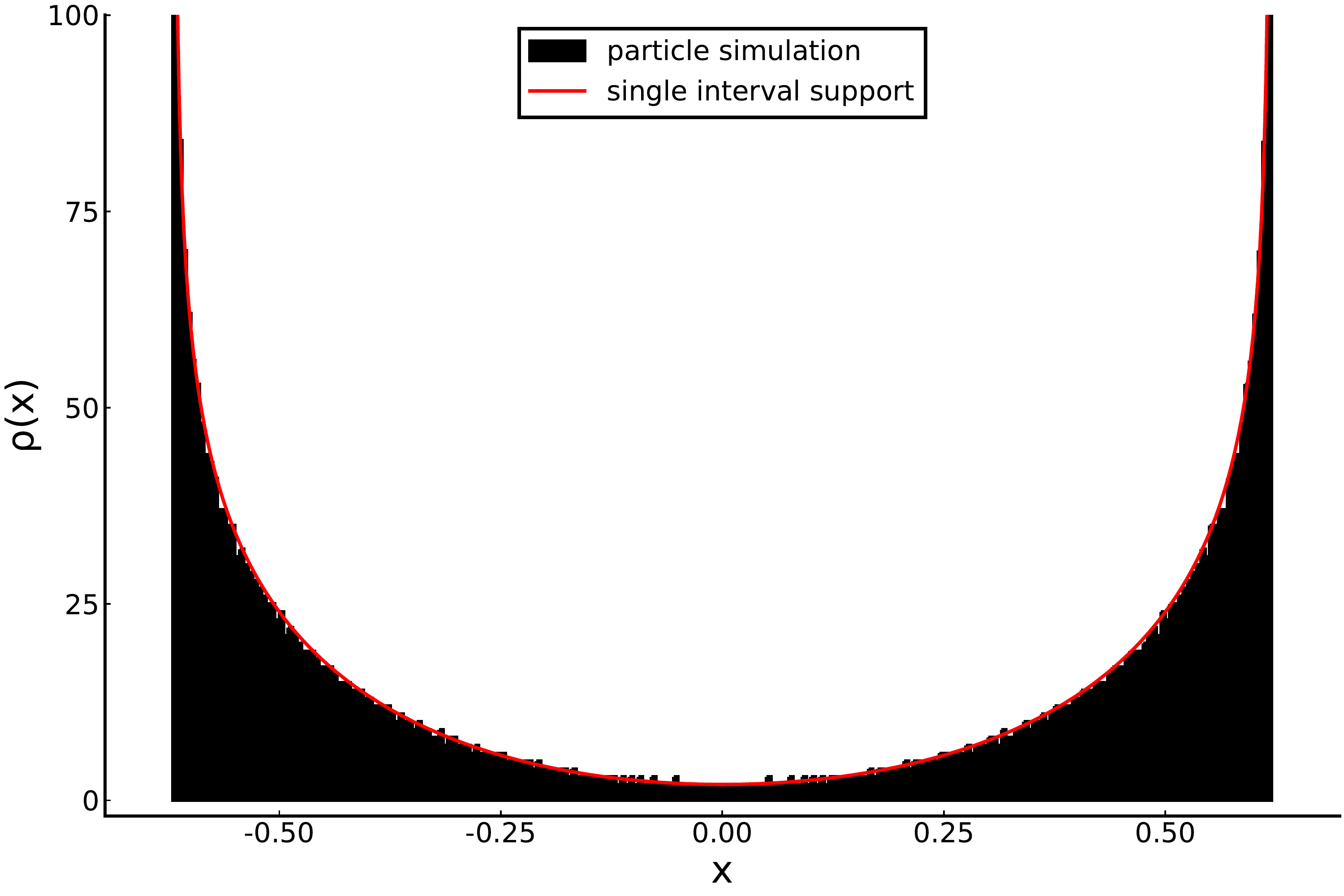} }}
    \caption{Combined plots comparing histograms of the results of a particle simulation, with 3000 particles initially randomly distributed on $(-1,1)$, with equilibrium measure candidates obtained using single and two interval support approaches respectively (measures standardized to histogram mass for ease of comparison), with parameters as in in \eqref{eq:twointervalexp}.} %
    \label{fig:twointervalsingleapproach}%
\end{figure}

\begin{figure}[ht]
     \subfloat[$(\alpha_1,\beta_1) = (4,1.61)$]
    {{ \centering \includegraphics[width=4.1cm]{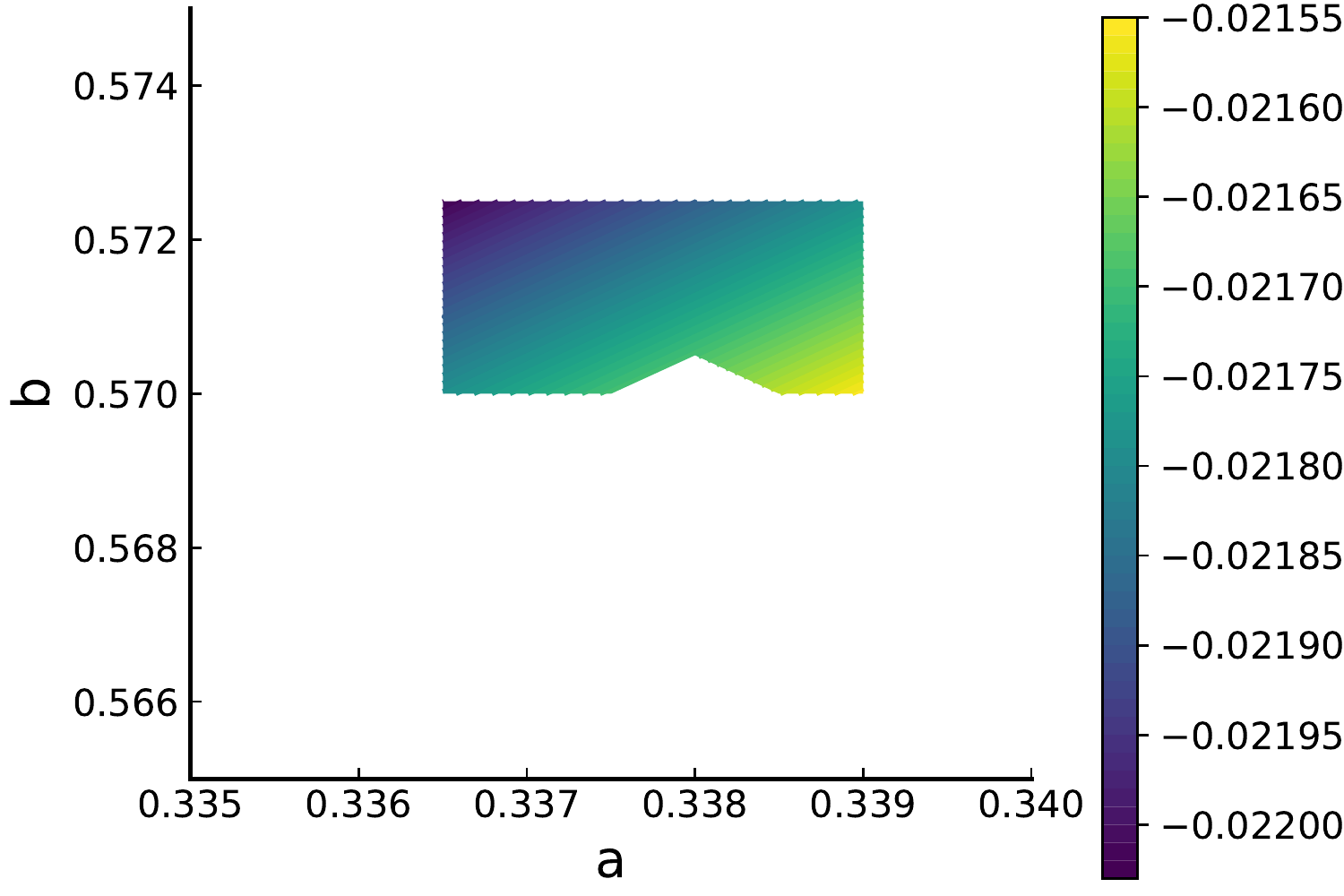} }}
     \subfloat[$(\alpha_2,\beta_2) = (3.34,1.83)$]
    {{ \centering \includegraphics[width=4.4cm]{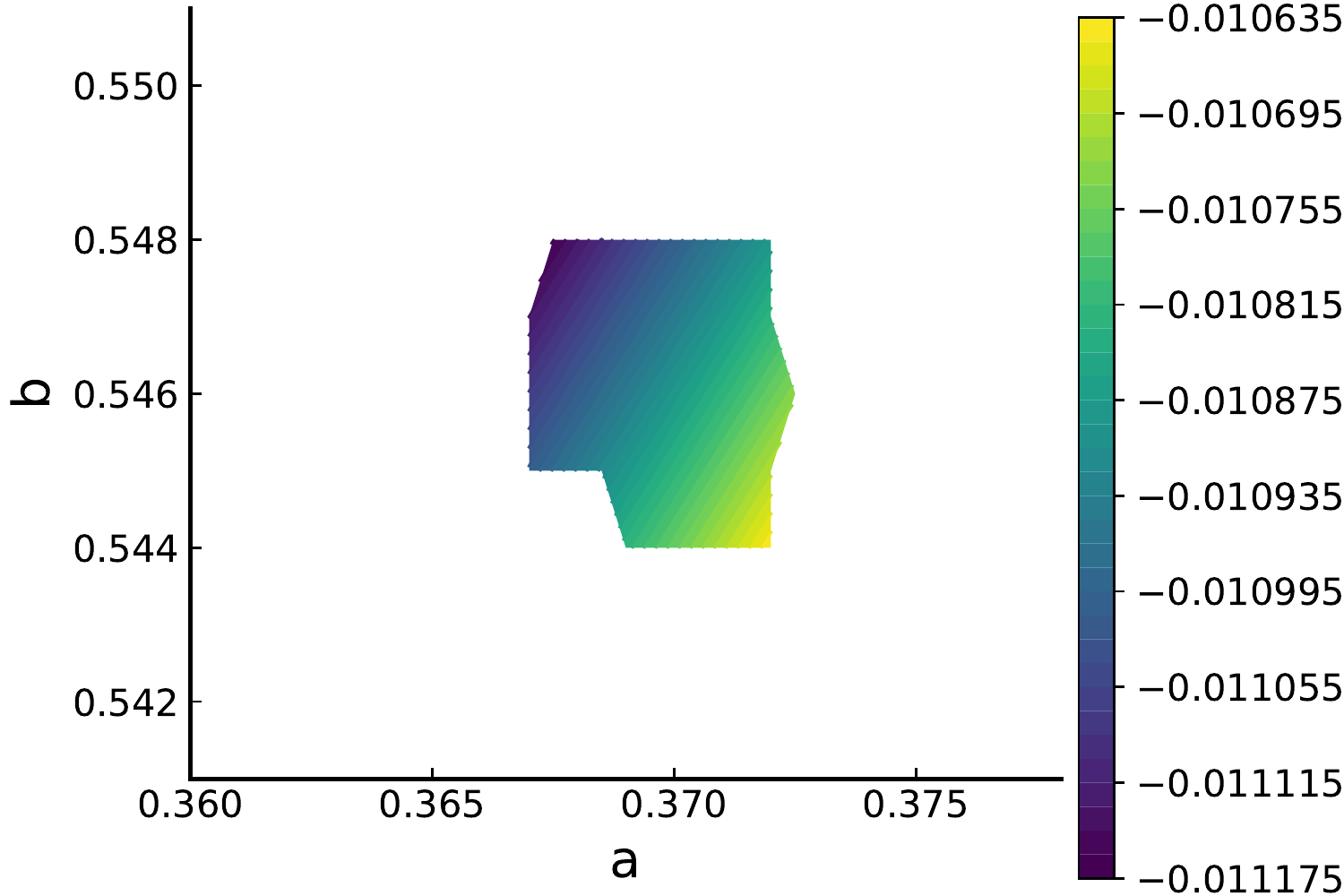} }}
         \subfloat[$(\alpha_3,\beta_3) = (3.5,1.6)$]
    {{ \centering \includegraphics[width=4.1cm]{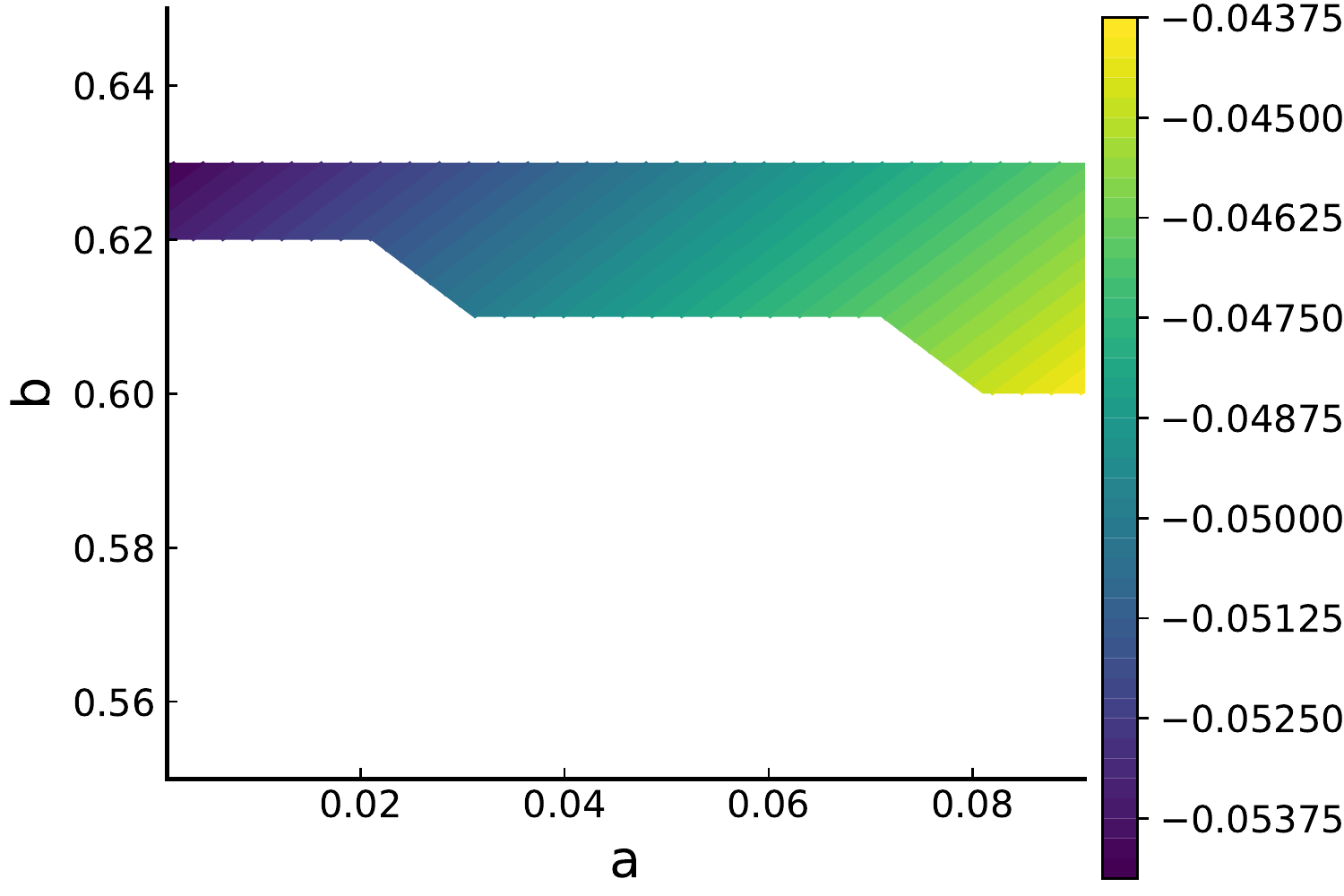} }}
    \caption{Energy contour plots with respect to $(a,b)$ for measures with two interval support $[-b,-a]\cup[a,b]$ for parameters in \eqref{eq:twointervalexp}, showing the approximate regions in which the obtained measures are positive and thus admissible. Note the implication of $a=0$ for the control case parameters $(\alpha_3,\beta_3)$, as in this case an admissible single interval support measure exists.}%
    \label{fig:energycontours}%
\end{figure}
\subsection{Transition from single to two interval support}\label{sec:twointervalexperiments2}
Finally, equipping an algorithm with a simple check for negative values allows it to automatically determine for which parameters $\beta$ it should perform a two interval approach. Using this as well as initial guesses for the optimizations obtained from small-scale particle simulations we can fully automate the procedure to study the transition from single to two interval support. Figure \ref{fig:phasetransition} shows three snapshots of this transition for $\alpha = 4$, where previous results \cite{carrillo_explicit_2016} indicate the split into two intervals should occur at $\beta = 1.5$. We have made an animation of the transition in steps of size $0.0001$ available at \cite{gutleb_1d_2020}. As we have seen above, the method we have introduced matches both analytically known as well as conjectured results for the single interval support parameter ranges as well as discrete particle simulation results in the high two interval support ranges. In the transition phase depicted in Figure \ref{fig:phasetransition} and the animation in \cite{gutleb_1d_2020}, no other method is currently known which could be used as independent evidence for the method's accuracy, as particle simulations in this range do not converge to a particular gap size as the number of particles is increased within reasonable computing times. As such, our results are the first which allow us to probe this transition phase of the support.
\begin{figure}[ht]
     \subfloat[]
    {{ \centering \includegraphics[width=6cm]{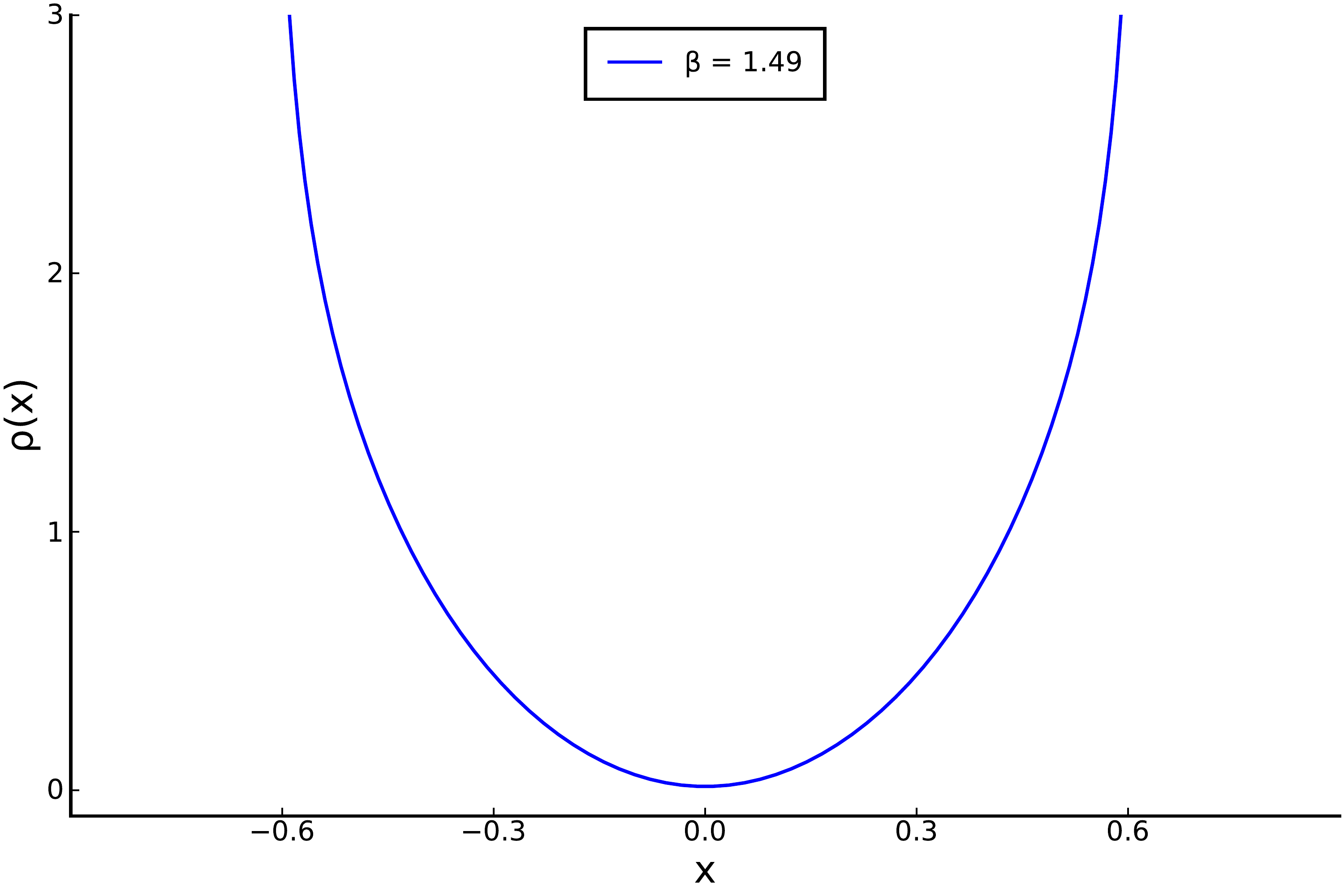} }}
     \subfloat[]
    {{ \centering \includegraphics[width=6cm]{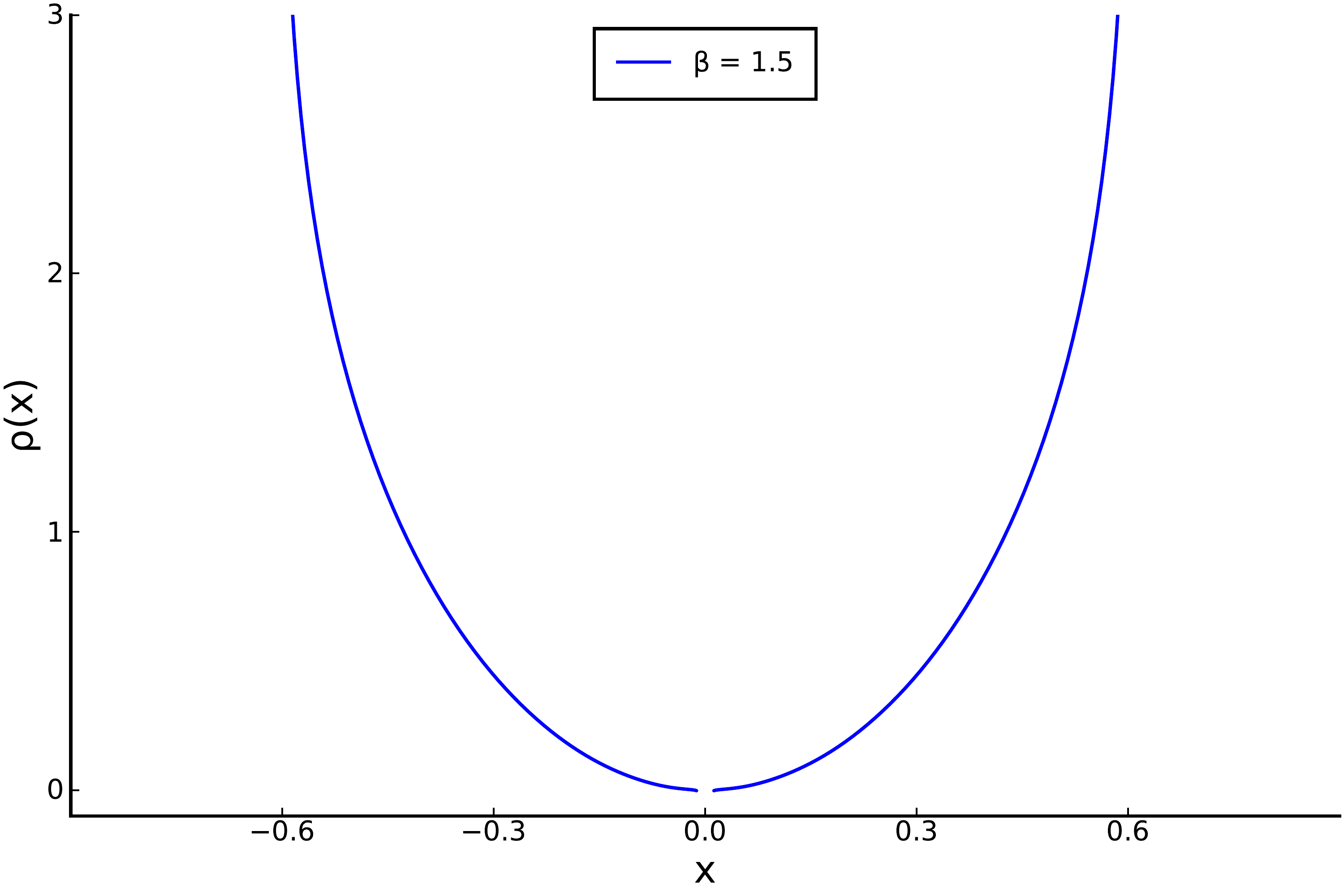} }}\\
         \subfloat[]
    {{ \centering \includegraphics[width=6cm]{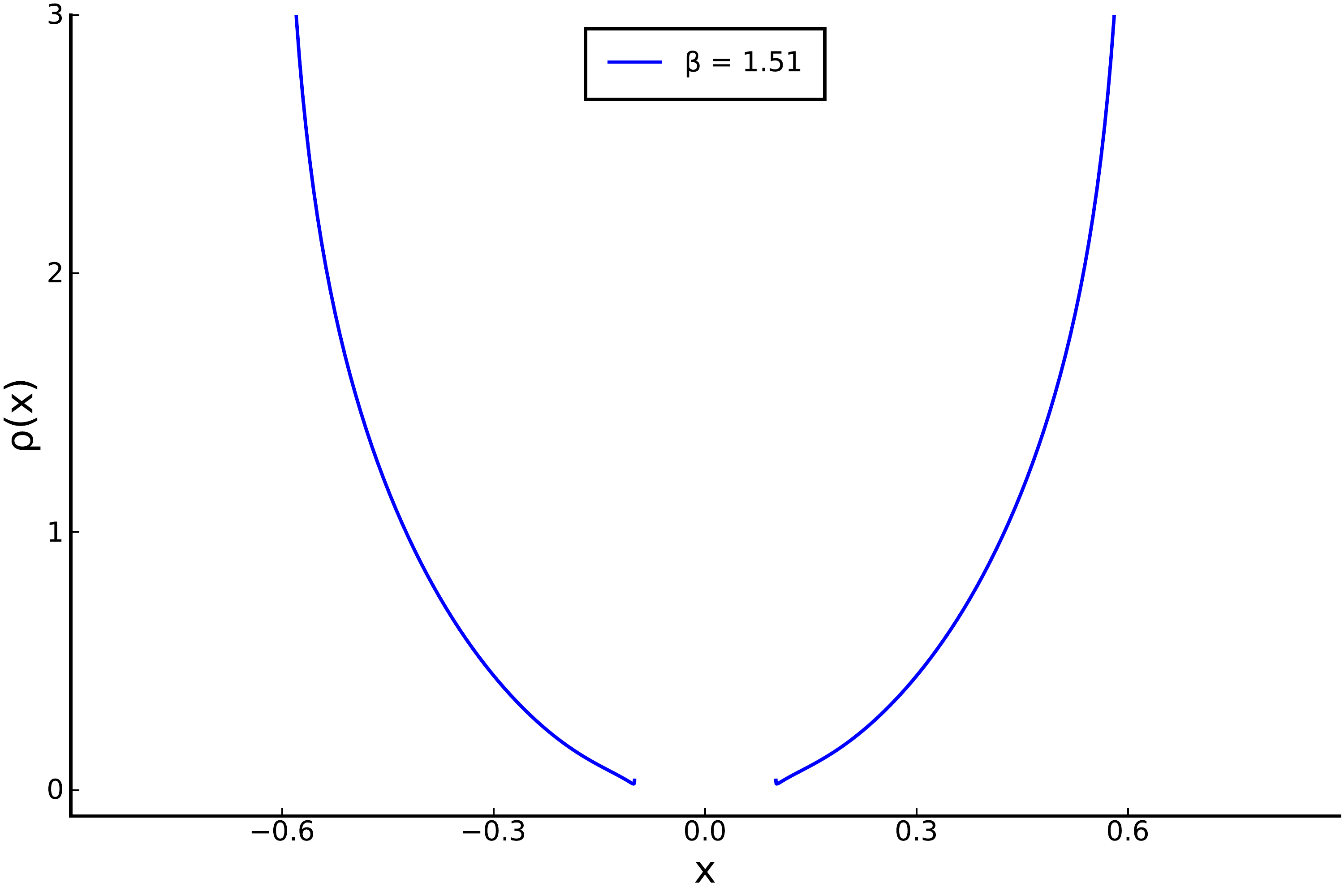} }}
             \subfloat[]
    {{ \centering \includegraphics[width=6cm]{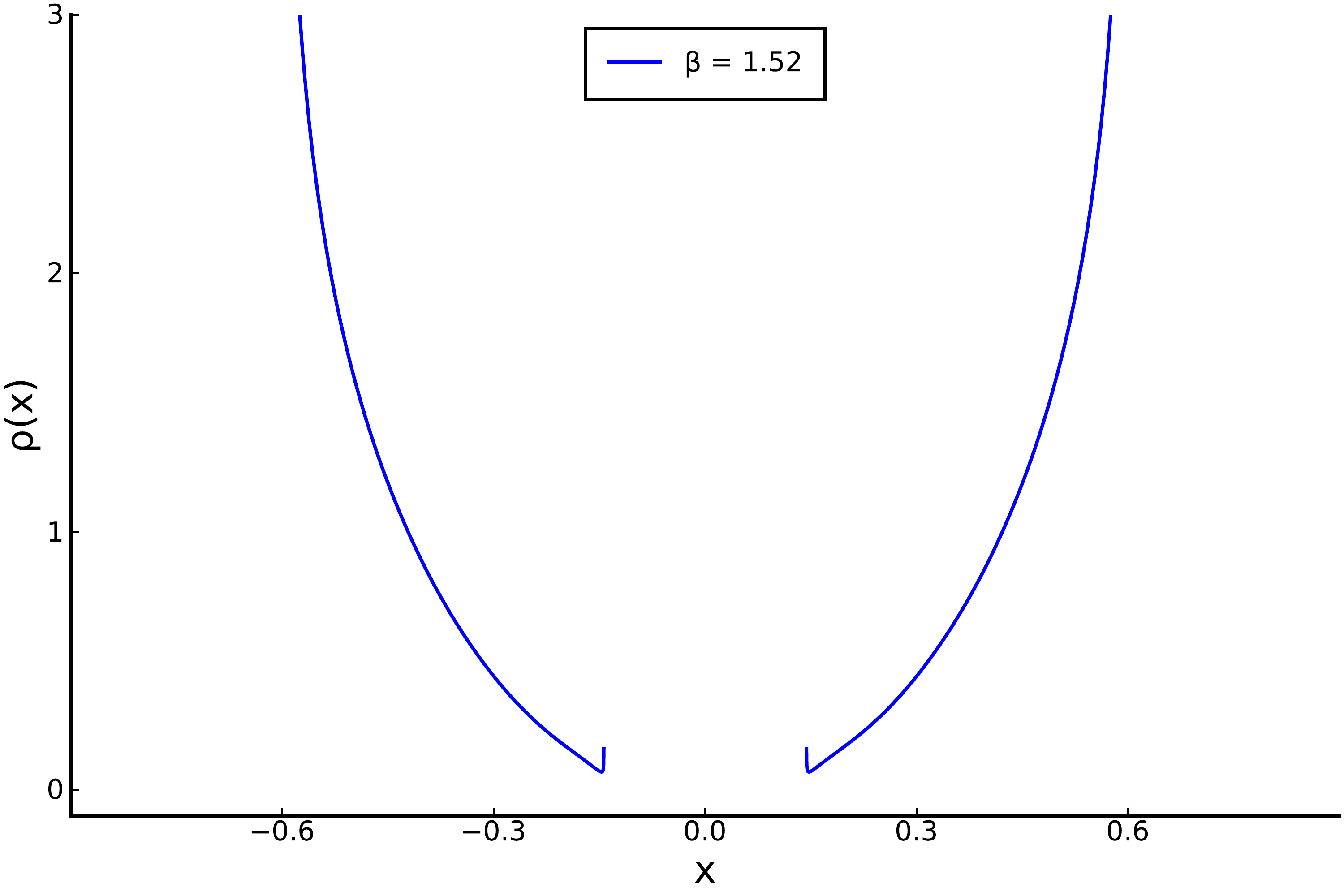} }}
    \caption{Equilibrium measures for $\alpha = 4$ around the transition from single to two interval support measures, which based on \cite{carrillo_explicit_2016} is expected to take place at $\beta = 1.5$. An animation of this process has been made available at \cite{gutleb_1d_2020}.}%
    \label{fig:phasetransition}%
\end{figure}
\section{Discussion} \label{sec:discussion}
We introduced a method combining ultraspherical sparse spectral and optimization approaches which provide a fast and accurate way to compute one-dimensional equilibrium measures for power law kernels of arbitrary parameters where solutions exist. The presented approach reduces what is na\"\i vely a difficult optimization problem on the space of positive measures to an optimization over the support boundary. We also considered cases where an external potential is present, resulting in a no longer translation invariant and possibly asymmetric boundary of support depending on the shape of the potential.\\
The present approach yields novel numerical insights into analytically open questions. In Section \ref{sec:alpha4} we showed numerical experimental evidence for uniqueness of the global minimizers in more general parameter ranges than has presently been proved and presented. We also discussed the phenomenon of gap formation for certain parameter ranges in Section \ref{sec:beta1}, \ref{sec:twointervalexperiments} and \ref{sec:twointervalexperiments2}, which has previously been suggested to occur in discrete particle swarm numerics but nothing was known about the continuous case. Methods to approximate the continuous model in this direct manner allow us to obtain some guidance and insights for results which may be provable in an analytic manner in the future.\\
Many of the analytic results cited in this paper such as \cite{carrillo_explicit_2016,lopes_uniqueness_2019,canizo_existence_2015}, also discuss similar results which hold for power law kernel equilibrium problems in higher dimensions. We expect that a higher dimensional extension of the method presented in this paper is feasible on the basis of multivariate orthogonal polynomials on appropriate domains and intend to address this in future work.

\section*{Acknowledgments}

We would like to thank Nick Hale for many discussions on two-sided fractional integrals. TSG would like to thank Jakob M\"oller for useful discussions on measure theory. JAC was supported by the Advanced Grant Nonlocal-CPD (Nonlocal PDEs for Complex Particle Dynamics: 	Phase Transitions, Patterns and Synchronization) of the European Research Council Executive Agency (ERC) under the European Union's Horizon 2020 research and innovation programme (grant agreement No. 883363). SO was supported by the Leverhulme Trust Research Project Grant  RPG-2019-144. JAC and SO were also supported by the Engineering and Physical Sciences Research Council (EPSRC) grant EP/T022132/1.

\bibliographystyle{siamplain}

\bibliography{references}

\end{document}